\let\orgdescriptionlabel\descriptionlabel
\renewcommand*{\descriptionlabel}[1]{%
	\let\orglabel\label
	\let\label\@gobble
	\phantomsection
	\edef\@currentlabel{#1}%
		\let\label\orglabel
	\orgdescriptionlabel{#1}%
}
\def\th@plain{%
	\thm@notefont{} % same as heading font
	\itshape % body font
}
\def\th@definition{%
	\thm@notefont{}% same as heading font
	\normalfont % body font
}
\g@addto@macro\th@remark{\thm@headpunct{}}
\g@addto@macro\th@definition{\thm@headpunct{}}
\g@addto@macro\th@plain{\thm@headpunct{}}
\definecolor{myblue}{rgb}{.8, .8, 1}
  \newcommand*\mybluebox[1]{%
    \colorbox{myblue}{\hspace{1em}#1\hspace{1em}}}
\crefname{equation}{}{}
\crefname{chapter}{Chapter}{Chapter}
\crefname{item}{}{items}
\crefname{figure}{Figure}{Figure}
\crefname{theorem}{Theorem}{Theorem}
\crefname{lemma}{Lemma}{}
\crefname{proposition}{Proposition}{Proposition}
\crefname{corollary}{Corollary}{Corollary}
\crefname{definition}{Definition}{Definition}
\crefname{fact}{Fact}{Fact}
\crefname{example}{Example}{Example}
\crefname{algorithm}{Algorithm}{Algorithm}
\crefname{remark}{Remark}{Remark}
\crefname{note}{Note}{Note}
\crefname{notation}{Notation}{Notation}
\crefname{case}{Case}{Case}
\crefname{exercise}{Exercise}{Exercise}
\crefname{question}{Question}{Question}
\crefname{claim}{Claim}{Claim}
\crefname{enumi}{}{}
\numberwithin{equation}{section}
\theoremstyle{plain} 
\newtheorem{theorem}{Theorem}[section]
\newtheorem{corollary}[theorem]{Corollary}
\newtheorem{fact}[theorem]{Fact}
\newtheorem{lemma}[theorem]{Lemma}
\newtheorem{proposition}[theorem]{Proposition}
\theoremstyle{definition} %Set the text in roman and add extral space above and below
\newtheorem{definition}[theorem]{Definition}
\newtheorem{example}[theorem]{Example}
\newtheorem{question}[theorem]{Question}
\newtheorem{remark}[theorem]{Remark}
\newcommand{\aff}{\ensuremath{\operatorname{aff}}}
\newcommand{\spn}{\ensuremath{{\operatorname{span}}}}
\newcommand{\weakly}{\ensuremath{{\;\operatorname{\rightharpoonup}\;}}}
\newcommand{\HH}{\ensuremath{{\mathcal{H}}}}
\newcommand{\dom}{\ensuremath{\operatorname{dom}}}
\newcommand{\Fix}{\ensuremath{\operatorname{Fix}}}
\newcommand{\Id}{\ensuremath{\operatorname{Id}}}
\newcommand{\Pro}{\ensuremath{\operatorname{P}}}
\newcommand{\R}{\ensuremath{\operatorname{R}}}
\newcommand{\I}{\ensuremath{\operatorname{I}}}
\newcommand{\card}{\ensuremath{\operatorname{card}}}
\newcommand{\CCO}[1]{CC{#1}}
\newcommand{\CC}[1]{CC_{#1}}
\providecommand{\norm}[1]{\lVert#1\rVert}
\providecommand{\Norm}[1]{{\Big\lVert}#1{\Big\rVert}}
\providecommand{\innp}[1]{\langle#1\rangle}
\newcommand\scalemath[2]{\scalebox{#1}{\mbox{\ensuremath{\displaystyle #2}}}}
\begin{document}
%set title in \sffamily font
\title{ \sffamily  On circumcenter mappings induced by nonexpansive operators}

\author{
         Heinz H.\ Bauschke\thanks{
                 Mathematics, University of British Columbia, Kelowna, B.C.\ V1V~1V7, Canada.
                 E-mail: \href{mailto:heinz.bauschke@ubc.ca}{\texttt{heinz.bauschke@ubc.ca}}.},~
         Hui\ Ouyang\thanks{
                 Mathematics, University of British Columbia, Kelowna, B.C.\ V1V~1V7, Canada.
                 E-mail: \href{mailto:hui.ouyang@alumni.ubc.ca}{\texttt{hui.ouyang@alumni.ubc.ca}}.},~ 
         and Xianfu\ Wang\thanks{
                 Mathematics, University of British Columbia, Kelowna, B.C.\ V1V~1V7, Canada.
                 E-mail: \href{mailto:shawn.wang@ubc.ca}{\texttt{shawn.wang@ubc.ca}}.}
                 }

\date{November 27, 2018} 

\maketitle

%---------------------------------------------Abstract---------------------------------------------
\begin{abstract}
\noindent
We introduce the circumcenter mapping induced by a set of (usually nonexpansive) operators. 
One prominent example of a circumcenter mapping is the celebrated
Douglas--Rachford splitting operator.
Our study is motivated by the Circumcentered--Douglas--Rachford
method recently introduced by Behling, Bello Cruz, and Santos in order to accelerate 
the Douglas--Rachford method for solving certain classes of feasibility problems.
We systematically explore the properness of the circumcenter mapping induced
by reflectors or projectors. Numerous examples are presented. 
We also present a version of Browder's demiclosedness principle for circumcenter mappings. 
\end{abstract}

{\small
\noindent
{\bfseries 2010 Mathematics Subject Classification:} 
{Primary 47H09,47H04; Secondary 
41A50, 
90C25 
}

\noindent{\bfseries Keywords:}
Browder's demiclosedness principle, 
circumcenter, 
circumcenter mapping,
nonexpansive,
projector,
reflector.
}

%---------------------------------------------Introduction---------------------------------------------
\section{Introduction}
Throughout this paper, we assume that 
	\begin{empheq}[box = \mybluebox]{equation*}
		 \text{$\mathcal{H}$ is a real Hilbert space} 
	\end{empheq}
with inner product $\innp{\cdot,\cdot}$ and induced norm $\|\cdot\|$. 
Let $m \in \mathbb{N} \smallsetminus \{0\}$, and let
$T_{1}, \ldots, T_{m-1}, T_{m}$ be operators 
from $\mathcal{H}$ to $\mathcal{H}$. Set 
\begin{empheq}{equation*}
\mathcal{S}=\{ T_{1}, \ldots, T_{m-1}, T_{m} \}, 
\end{empheq}
and denote the power set of $\mathcal{H}$ as $2^{\mathcal{H}}$. The associated set-valued operator $\mathcal{S}: \mathcal{H} \rightarrow 2^{\mathcal{H}}$ is defined by
\begin{empheq}{equation*}
(\forall x \in \mathcal{H}) \quad \mathcal{S}(x)=\{ T_{1}x, \ldots, T_{m-1}x, T_{m}x\}.
\end{empheq}

Unless otherwise specified, we assume that
\begin{empheq}[box = \mybluebox]{equation*}
U_{1}, \ldots, U_{m}~\text{are closed affine subspaces of}~\mathcal{H},
\text{ with }\bigcap^{m}_{i=1} U_{i} \neq \varnothing. 
\end{empheq}

In this paper, we introduce the circumcenter mapping $\CC{\mathcal{S}}$
induced by $\mathcal{S}$ which maps every element $x \in \mathcal{H}$ to
either empty set or the (unique if it exists) circumcenter of the finitely
many elements in the nonempty set $\mathcal{S}(x)$. 
In fact, the circumcenter mapping $\CC{\mathcal{S}}$ induced by $\mathcal{S}$
is the composition $\CCO{} \circ \mathcal{S}$ where $\CCO{}$ is the
circumcenter operator defined in \cite{BOyW2018}. 
The domain $\CC{\mathcal{S}}$ is defined to be $\dom
\CC{\mathcal{S}} = \{ x \in \mathcal{H} ~|~ \CC{\mathcal{S}}x \neq
\varnothing\}$.
We say the circumcenter
mapping $\CC{\mathcal{S}}$ is \emph{proper}, if $ \dom \CC{\mathcal{S}}(x) =
\mathcal{H}$. 
Properness is an important property for algorithms where one wishes to
consider sequences of the form $(\CC{\mathcal{S}}^{k}x)_{k \in \mathbb{N}}$. 

\emph{The goal of this paper is to explore conditions sufficient for the
circumcenter mapping to be proper. We also connect the circumcenter mapping
to the celebrated demiclosedness principle by Felix Browder.}

The CRM (Circumcentered--Reflection Method) operator $C$ recently 
investigated by Behling,
Bello Cruz, and Santos in {\rm \cite[page~159]{BCS2018}} is a particular instance of
a proper circumcenter mapping. The C--DRM
(Circumcentered--Douglas--Rachford Method) operator $C_{T}$ defined by 
Behling et al.\ in {\rm \cite[Section~2]{BCS2017}} is CRM operator 
associated with only two linear subspaces. Hence, the $C_{T}$ is a 
special case of our proper circumcenter mapping as well.

Behling et al.\ introduced in  \cite{BCS2017} the C--DRM which generates 
iterates by taking the intersection of bisectors of reflection steps to
accelerate the Douglas--Rachford method to solve certain classes of
feasibility problems. Our paper \cite{BOyW2018} and this paper are
motivated by \cite{BCS2017}. The proof of one of our main results, 
\cref{thm:CCS:proper}, is inspired by
{\rm\cite[Lemma~2]{BCS2017}}.
We now discuss further results and the organization of this paper.
In \cref{sec:Preliminaries}, we collect various results for subsequent use.
In particular, facts on circumcenter operator defined in {\rm
\cite[Definition~3.4]{BOyW2018}} are reviewed in Section~\ref{Sec:Subsec:CircOpe}. 
In \cref{sec:CircumMapping}, we introduce the circumcenter mapping
$\CC{\mathcal{S}}$ induced by a set of operators $\mathcal{S}$. Based on some
known results of circumcenter operator, we derive some sufficient conditions
for the circumcenter mapping $\CC{\mathcal{S}}$ to be proper. When
$\mathcal{S}$ consists of only three operators, we provide a sufficient and
necessary condition for the $\CC{\mathcal{S}}$ to be proper. 
We also obtain conditions sufficient for continuity. 
Examples illustrating the tightness of our assumptions are provided as well. 
\cref{subsec:Demiclosedness} contains the demiclosedness principle
for certain circumcenter mappings. 
In \cref{sec:CircumMappingReflectors}, we consider the circumcenter of 
finite subsets drawn from the affine hull of compositions of reflectors.
Inspired by {\rm \cite[Lemma~2]{BCS2017}}, we prove the properness of 
a certain 
class of circumcenter mappings induced by reflectors. 
We also provide improper examples. 
Two particular instances of $\CC{\mathcal{S}}$, one
of which belongs to the class of C--DRM operators from \cite{BCS2017} while the
other is new, are considered. Comparing
to the Douglas--Rachford Method (DRM) and the Method of Alternating projections
(MAP), we find in preliminary numerical explorations that 
$(\CC{\mathcal{S}}^{k}x)_{k \in \mathbb{N}}$ can be used to solve best approximation
problems. 
It is interesting that in general $\CC{\mathcal{S}}$ is neither
continuous nor linear.
In \cref{sec:CircumMappingProjectots}, the operators in $\mathcal{S}$ are
chosen from the affine hull of the set of compositions of projectors. We provide
both proper and improper examples of corresponding circumcenter mappings. 
The final \cref{sec:MoreImproper} deals with reflectors and reflected resolvents. 

Let us turn to notation. Let $K$ and $C$ be subsets of $\mathcal{H}$, 
$z \in \mathcal{H}$ and $\lambda \in \mathbb{R}$. Then $K+C =\{x +y
~|~ x \in K, y \in C \}$, $K +z = K + \{z\}$, and $\lambda K =\{\lambda x ~|~
x \in K\}$. The cardinality of the set $K$ is denoted as $\card(K)$. The
intersection of all the linear subspaces of $\mathcal{H}$ containing $K$ is
called the \emph{span} of $K$, and is denoted by $\spn~K$. A nonempty subset
$K$ of $\mathcal{H}$ is an \emph{affine subspace} of $\mathcal{H}$ if
$(\forall \rho\in\mathbb{R})$ $\rho K + (1-\rho)K = K$; moreover, the
smallest affine subspace containing $K$ is the
\emph{affine hull} of $K$, denoted $\aff K$. Assume that $C$ is a nonempty
closed, convex subset in $\mathcal{H}$. We denote by $\Pro_{C}$ the
\emph{projector} onto $C$. $\R_{C} := 2\Pro_{C} -\Id $ is the
\emph{reflector} associated with $C$. Let $T:\mathcal{H} \rightarrow
\mathcal{H}$. The set of \emph{fixed points} of $T$ is $\Fix
T =\{x \in \mathcal{H} ~|~ x= Tx\}$.
Let $(x_{k})_{k \in \mathbb{N}}$ be a sequence in $\mathcal{H}$ and let $x \in \mathcal{H}$. 
We use $x_{k} \rightharpoonup x$ to indicate that $(x_{k})_{k \in \mathbb{N}}$
converges weakly to $x$. 
The set $\mathbf{B}[x;r] := \{y \in \mathcal{H} ~|~ \norm{y-x} \leq r\}$ 
is the closed ball centered at $x$ of radius $r\geq 0$.
For other notation not explicitly defined here, we refer the reader to \cite{BC2017}.

%---------------------------------------------Preliminaries---------------------------------------------
\section{Auxiliary results} \label{sec:Preliminaries}
In this section, we provide various results that will be useful in the sequel. We start with some facts about affine subspaces.

\subsection{Affine subspaces and related concepts}

\begin{definition} {\rm \cite[page~4]{R1970}}
An affine subspace $C$ is said to be \emph{parallel} to an affine subspace $
M $ if $C = M +a $ for some $ a \in \mathcal{H}$.
\end{definition}

\begin{fact} {\rm \cite[Theorem~1.2]{R1970}} 
\label{fac:AffinePointLinearSpace}
	Every affine subspace $C$ is parallel to a unique linear
	subspace $L$, which is given by 
	\begin{align*}
	(\forall y \in C) \quad L = C - y = C - C . 
	\end{align*}
\end{fact}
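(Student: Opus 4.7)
The plan is to construct the linear subspace $L$ directly as $L := C - y$ for an arbitrarily chosen basepoint $y \in C$ (available since affine subspaces are nonempty by the paper's convention), verify that this set is a linear subspace, then establish uniqueness (which will force $C - y$ to be independent of $y$) and finally identify $L$ with $C - C$.

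First I would observe that $0 = y - y \in L$. For closure under scalar multiplication, given $x = c - y \in L$ and $\lambda \in \mathbb{R}$, applying the affine-subspace identity $\rho C + (1 - \rho) C = C$ with $\rho = \lambda$ yields $\lambda c + (1 - \lambda) y \in C$, and hence $\lambda x = \lambda c + (1 - \lambda) y - y \in C - y = L$. For closure under addition, I would take $c_1, c_2 \in C$, use $\rho = \tfrac{1}{2}$ to get $m := \tfrac{1}{2}(c_1 + c_2) \in C$, then apply the same identity with $\rho = 2$ to $m$ and $y$ to obtain $2m - y = c_1 + c_2 - y \in C$; therefore $(c_1 - y) + (c_2 - y) = (c_1 + c_2 - y) - y \in C - y = L$.

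Next I would establish uniqueness: if $C = L_1 + a_1 = L_2 + a_2$ for linear subspaces $L_1, L_2$, then $a_1 \in C$ (since $0 \in L_1$), and writing $a_1 = \ell + a_2$ with $\ell \in L_2$ gives $a_1 - a_2 \in L_2$. For any $x \in L_1$, $x + a_1 \in C = L_2 + a_2$, whence $x \in L_2 - (a_1 - a_2) \subseteq L_2$. Symmetry gives $L_1 = L_2$. This uniqueness in turn shows that the set $C - y$ does not depend on the choice of $y \in C$, and the equality $L = C - C$ follows immediately from $C - C = \bigcup_{y \in C}(C - y) = L$.

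The only step that requires a touch of care is the closure of $L$ under addition, because it needs the affine-subspace identity to be invoked for two distinct values of $\rho$ (namely $\tfrac{1}{2}$ and $2$). Everything else is bookkeeping.
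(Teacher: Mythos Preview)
Your proof is correct. The paper does not actually supply its own proof of this statement; it is recorded as a \emph{Fact} with a citation to \cite[Theorem~1.2]{R1970}, so there is nothing to compare against beyond noting that your argument is a clean, self-contained verification using precisely the paper's definition of an affine subspace (the identity $\rho K + (1-\rho)K = K$ for all $\rho \in \mathbb{R}$).
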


\begin{definition} {\rm \cite[page~4]{R1970}}
 The \emph{dimension} of a nonempty affine subspace is defined to be the dimension of the 
 linear subspace parallel to it. 
\end{definition}

\begin{fact} {\rm \cite[page~7]{R1970}} \label{fac:AffSubsExpre}
  Let $x_{1}, \ldots, x_{m} \in \mathcal{H}$. 
  Then the affine hull is given by 
	\begin{align*}
	\aff\{x_{1}, \ldots,  x_{m}\}
	=\Big\{\lambda_{1}x_{1}+\cdots +\lambda_{m}x_{m} ~\Big|~\lambda_{1},\ldots,\lambda_{m} \in \mathbb{R} ~\text{and}~\sum^{m}_{i=1} \lambda_{i}=1 \Big\}.
	\end{align*}
\end{fact}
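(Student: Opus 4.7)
Let $R$ denote the right-hand side of the claimed equality, namely the set of all affine combinations $\sum_{i=1}^{m} \lambda_{i} x_{i}$ with $\sum_{i=1}^{m} \lambda_{i} = 1$. The plan is to establish the two inclusions $\aff\{x_{1},\ldots,x_{m}\} \subseteq R$ and $R \subseteq \aff\{x_{1},\ldots,x_{m}\}$, exploiting the defining property of the affine hull as the smallest affine subspace containing $\{x_{1},\ldots,x_{m}\}$.

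First I would verify that $R$ is itself an affine subspace containing each $x_{i}$. Membership of each $x_{i}$ is immediate by taking $\lambda_{i} = 1$ and all other coefficients equal to $0$. To check the affine property, I take arbitrary $y = \sum_{i=1}^{m} \lambda_{i} x_{i}$ and $y' = \sum_{i=1}^{m} \lambda'_{i} x_{i}$ in $R$ and $\rho \in \mathbb{R}$, and observe that $\rho y + (1-\rho) y' = \sum_{i=1}^{m} \bigl(\rho \lambda_{i} + (1-\rho)\lambda'_{i}\bigr) x_{i}$, where the new coefficients sum to $\rho \cdot 1 + (1-\rho) \cdot 1 = 1$. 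Hence $\rho R + (1-\rho) R \subseteq R$, and the reverse inclusion follows by choosing $y = y'$. So $R$ is an affine subspace containing $\{x_{1},\ldots,x_{m}\}$, and by minimality of the affine hull we get $\aff\{x_{1},\ldots,x_{m}\} \subseteq R$.

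For the reverse inclusion, I would use \cref{fac:AffinePointLinearSpace}: let $M$ be any affine subspace containing $\{x_{1},\ldots,x_{m}\}$ and let $L = M - x_{1}$ be the unique linear subspace parallel to $M$. Then $x_{i} - x_{1} \in L$ for each $i$. For any affine combination $\sum_{i=1}^{m}\lambda_{i} x_{i} \in R$, using the constraint $\sum_{i=1}^{m} \lambda_{i} = 1$ I rewrite
\begin{equation*}
  \sum_{i=1}^{m} \lambda_{i} x_{i} = x_{1} + \sum_{i=1}^{m} \lambda_{i} (x_{i} - x_{1}),
\end{equation*}
and the right-hand side lies in $x_{1} + L = M$ since $L$ is closed under linear combinations. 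Thus $R \subseteq M$; taking $M = \aff\{x_{1},\ldots,x_{m}\}$ yields the desired inclusion.

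The only subtle point in the proof is the translation-to-linear-subspace step, where one must use $\sum \lambda_{i} = 1$ precisely in order to re-express an affine combination as $x_{1}$ plus a bona fide linear combination of elements of $L$; without that constraint this rewriting would fail. Everything else is essentially a bookkeeping check, and I do not foresee any real obstacle beyond ensuring the affine-combination identity is displayed cleanly.
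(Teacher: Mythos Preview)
Your argument is correct and complete. The paper does not actually prove this statement: it is recorded as a \emph{Fact} with a citation to Rockafellar's \emph{Convex Analysis}, so there is no in-paper proof to compare against, and your self-contained two-inclusion argument is exactly the standard one.
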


\begin{fact}  {\rm \cite[Lemma~2.6]{BOyW2018}} \label{fact:AffineHull}
	Let $x_{1}, \ldots,x_{m} \in \mathcal{H}$, where $m \geq 2$. Then for every
	$i_{0} \in \{2, \ldots, m\}$, we have
	\begin{align*}
	\aff\{x_{1}, \ldots, x_{m}\} 
	&~=x_{1} + \spn \{x_{2}-x_{1}, \ldots, x_{m}-x_{1}\}\\
	&~=x_{i_{0}}+\spn\{x_{1}-x_{i_{0}},\ldots,x_{i_{0}-1}-x_{i_{0}}, x_{i_{0}+1}-x_{i_{0}},\ldots,x_{m}-x_{i_{0}}\}.
	\end{align*} 
\end{fact}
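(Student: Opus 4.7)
The plan is to reduce both equalities to the representation of the affine hull as the set of affine combinations provided by \cref{fac:AffSubsExpre}, together with the fact that the span of a finite set consists of all real linear combinations of its elements.

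First, for the equality $\aff\{x_1,\ldots,x_m\} = x_1 + \spn\{x_2 - x_1, \ldots, x_m - x_1\}$, I would argue by double inclusion. For the forward inclusion, take a typical element $y = \sum_{i=1}^m \lambda_i x_i$ with $\sum_{i=1}^m \lambda_i = 1$ from the right-hand side of \cref{fac:AffSubsExpre}. Using $\lambda_1 = 1 - \sum_{i=2}^m \lambda_i$, rewrite
\begin{equation*}
y = \Big(1 - \sum_{i=2}^m \lambda_i\Big) x_1 + \sum_{i=2}^m \lambda_i x_i = x_1 + \sum_{i=2}^m \lambda_i (x_i - x_1),
\end{equation*}
which visibly lies in $x_1 + \spn\{x_2 - x_1, \ldots, x_m - x_1\}$. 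Conversely, any $z = x_1 + \sum_{i=2}^m \mu_i (x_i - x_1)$ with $\mu_2, \ldots, \mu_m \in \mathbb{R}$ expands as $(1 - \sum_{i=2}^m \mu_i) x_1 + \sum_{i=2}^m \mu_i x_i$, whose coefficients sum to $1$, so $z \in \aff\{x_1, \ldots, x_m\}$ by \cref{fac:AffSubsExpre}.

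Second, to obtain the formula with an arbitrary $i_0 \in \{2, \ldots, m\}$ in place of the index $1$, I would observe that the right-hand side of \cref{fac:AffSubsExpre} is symmetric in $x_1, \ldots, x_m$, so applying the first part of the argument after swapping the roles of $x_1$ and $x_{i_0}$ immediately yields
\begin{equation*}
\aff\{x_1, \ldots, x_m\} = x_{i_0} + \spn\{x_1 - x_{i_0}, \ldots, x_{i_0 - 1} - x_{i_0}, x_{i_0+1} - x_{i_0}, \ldots, x_m - x_{i_0}\}.
\end{equation*}
Alternatively, one can translate the first equality by $x_{i_0} - x_1$ and then rewrite each generator $x_i - x_1 = (x_i - x_{i_0}) - (x_1 - x_{i_0})$, which stays in the same span; I expect the symmetry argument to be cleaner.

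There is no real obstacle here; the only point requiring minor care is the bookkeeping in the substitution $\lambda_1 = 1 - \sum_{i \neq 1}\lambda_i$ (and its $i_0$-analogue), so that the affine-combination condition is precisely what allows one to move between coefficients summing to $1$ and arbitrary coefficients on the displacement vectors $x_i - x_{i_0}$.
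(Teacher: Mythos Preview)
Your argument is correct and entirely standard: the double inclusion using \cref{fac:AffSubsExpre} and the substitution $\lambda_1 = 1 - \sum_{i\geq 2}\lambda_i$ is exactly the right bookkeeping, and the symmetry observation cleanly handles the general $i_0$. Note that the paper itself does not prove this statement; it is recorded as a Fact with a citation to \cite[Lemma~2.6]{BOyW2018}, so there is no in-paper proof to compare against.
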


\begin{definition} {\rm \cite[page~6]{R1970}}
Let $x_{0}, x_{1}, \ldots, x_{m} \in \mathcal{H}$. The $m+1$ vectors $x_{0}, x_{1}, \ldots, x_{m}$ are said to be affinely independent if $\aff \{x_{0}, x_{1}, \ldots, x_{m}\}$ is $m$-dimensional. We will also say $(x_{0},x_{1},\ldots, x_{m}) = (x_{i})_{i \in \{0,1,\ldots,m\}}$ is affinely independent.
\end{definition}

\begin{fact}{\rm \cite[page~7]{R1970}} \label{fac:AffinIndeLineInd}
Let $x_{1}, x_{2}, \ldots, x_{m} \in \mathcal{H}$. Then $x_{1}, x_{2}, \ldots,x_{m}$ are affinely independent if and only if $ x_{2}-x_{1}, \ldots, x_{m}-x_{1}$ are linearly independent.
\end{fact}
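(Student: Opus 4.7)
The plan is to chain together the definitions of affine independence, dimension, and affine hull, using the earlier facts already established in this subsection. By the definition of affine independence, the statement ``$x_{1},\ldots,x_{m}$ are affinely independent'' is exactly the statement that $\aff\{x_{1},\ldots,x_{m}\}$ has dimension $m-1$. Thus the task reduces to showing that $\dim\aff\{x_{1},\ldots,x_{m}\} = m-1$ if and only if the $m-1$ vectors $x_{2}-x_{1},\ldots,x_{m}-x_{1}$ are linearly independent.

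First I would invoke \cref{fact:AffineHull} to write
\begin{equation*}
\aff\{x_{1},\ldots,x_{m}\} = x_{1} + \spn\{x_{2}-x_{1},\ldots,x_{m}-x_{1}\}.
\end{equation*}
By \cref{fac:AffinePointLinearSpace}, the unique linear subspace parallel to $\aff\{x_{1},\ldots,x_{m}\}$ is obtained by subtracting any fixed point of the affine subspace; choosing $x_{1}$, this parallel linear subspace is precisely $L := \spn\{x_{2}-x_{1},\ldots,x_{m}-x_{1}\}$. Then by the definition of the dimension of an affine subspace (\cref{fac:AffSubsExpre} is the affine hull formula, and the dimension definition is recorded just before it), we have $\dim \aff\{x_{1},\ldots,x_{m}\} = \dim L$.

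Finally, I would appeal to the standard linear algebra fact that a spanning set of $m-1$ vectors produces a subspace of dimension $m-1$ if and only if those vectors are linearly independent (otherwise a nontrivial dependence reduces the dimension strictly below $m-1$). Combining the equivalences yields the desired characterization.

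I do not anticipate any serious obstacle here; the argument is essentially a bookkeeping exercise that matches the definitions against the affine hull formula from \cref{fact:AffineHull}. The only point that requires a small care is making sure one is counting the correct number of difference vectors, namely $m-1$ of them, so that the equality ``dimension equals number of generators'' coincides with linear independence.
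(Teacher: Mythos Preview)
Your argument is correct: it unwinds the definitions via \cref{fact:AffineHull} and \cref{fac:AffinePointLinearSpace} to identify the parallel linear subspace as $\spn\{x_{2}-x_{1},\ldots,x_{m}-x_{1}\}$, and then invokes the elementary fact that $m-1$ generators span an $(m-1)$-dimensional space precisely when they are linearly independent. There is nothing to fault.

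Note, however, that the paper does not supply its own proof of this statement; it is recorded as a \emph{Fact} with a citation to \cite[page~7]{R1970}, where the result appears as part of Rockafellar's discussion of affine sets. So there is no in-paper argument to compare against. Your proof is a clean and self-contained justification using only the material already assembled in the subsection, and would serve well as a replacement for the bare citation.
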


\subsection{Projectors and reflectors}
Our first result follows easily from the definitions. 

\begin{lemma} \label{lem:PU:RUIdempotent}    
Let $C$ be a nonempty closed convex subset of $\mathcal{H}$. 
Then 
\begin{enumerate}
\item \label{lem:PUIdempotent} $ \Pro_{C} \Pro_{C}=\Pro_{C}$.
\item \label{lem:RUIdempotent} $\R_{C}\R_{C} = \Id$.
\end{enumerate}
\end{lemma}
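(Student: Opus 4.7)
The plan is to dispatch both parts directly from the definitions, using the fact that $\Pro_{C}$ selects the unique nearest point in $C$ and that $\R_{C}=2\Pro_{C}-\Id$.

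For \cref{lem:PUIdempotent}, I would first observe that for any $x \in \mathcal{H}$, the point $\Pro_{C}x$ lies in $C$. Next, I would note the elementary fact that if $y \in C$, then $y$ itself minimizes $\norm{y-\cdot}$ over $C$, so $\Pro_{C}y = y$. Applying this observation to $y = \Pro_{C}x$ yields $\Pro_{C}(\Pro_{C}x) = \Pro_{C}x$, which is the desired identity.

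For \cref{lem:RUIdempotent}, I would simply expand using the definition of the reflector:
\begin{align*}
\R_{C}\R_{C} = (2\Pro_{C}-\Id)(2\Pro_{C}-\Id) = 4\Pro_{C}\Pro_{C} - 2\Pro_{C} - 2\Pro_{C} + \Id.
\end{align*}
Substituting $\Pro_{C}\Pro_{C} = \Pro_{C}$ from part~\cref{lem:PUIdempotent} collapses the right-hand side to $4\Pro_{C} - 4\Pro_{C} + \Id = \Id$.

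There is no real obstacle here: both assertions are immediate consequences of well-known properties of the metric projector onto a nonempty closed convex set and the algebraic definition of the reflector. The only point worth verifying carefully is that $\Pro_{C}y = y$ for $y \in C$, which follows from the uniqueness of the nearest point together with the fact that $\norm{y-y}=0$.
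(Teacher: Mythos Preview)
Your proof of \cref{lem:PUIdempotent} is fine and matches the paper's intent. The trouble is in \cref{lem:RUIdempotent}: the expansion
\[
(2\Pro_{C}-\Id)(2\Pro_{C}-\Id) = 4\Pro_{C}\Pro_{C} - 2\Pro_{C} - 2\Pro_{C} + \Id
\]
silently assumes that $\Pro_{C}$ distributes over the inner sum $2\Pro_{C}x-x$, i.e., that $\Pro_{C}(2\Pro_{C}x-x)=2\Pro_{C}(\Pro_{C}x)-\Pro_{C}x$. This holds when $\Pro_{C}$ is affine, but for a general nonempty closed convex $C$ the projector is nonlinear and the step fails. In fact the assertion $\R_{C}\R_{C}=\Id$ is \emph{false} at this level of generality: take $\mathcal{H}=\mathbb{R}$ and $C=[0,\infty)$, so that $\R_{C}x=2\max(x,0)-x=|x|$, and then $\R_{C}\R_{C}(-1)=1\neq -1$.

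The paper offers no proof beyond ``follows easily from the definitions'', so the oversight is shared; but every actual use of \cref{lem:RUIdempotent} in the paper (e.g., in the proof of \cref{prop:TT:CWhat} and in \cref{exam::IMp:affRURU}) is for closed \emph{affine} $C$, where $\Pro_{C}$ is affine by \cref{prop:PR}\cref{prop:PR:Affine} and your expansion is then legitimate. The clean fix is to add the hypothesis that $C$ is affine to part~\cref{lem:RUIdempotent}, after which your argument goes through verbatim.
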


\begin{fact} {\rm \cite[Theorem~5.8]{D2012}} \label{MetrProSubs8}
Let $C$ be a closed linear subspace of $\mathcal{H}$. Then
\begin{enumerate}
\item \label{MetrProSubs8:ii} $\Id =\Pro_{C}+\Pro_{C^{\perp}}$.
\item \label{MetrProSubs8:iv} $C^{\perp}=\{x \in \mathcal{H}~|~ \Pro_{C}(x)=0\}$ and $C=\{x \in \mathcal{H}~|~ \Pro_{C^{\perp}}(x)=0\}=\{x \in \mathcal{H}~|~ \Pro_{C}(x)=x \}$.
\end{enumerate}
\end{fact}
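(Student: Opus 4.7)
The plan is to derive both parts from the orthogonal decomposition theorem $\mathcal{H} = C \oplus C^{\perp}$, which holds because $C$ is a closed linear subspace. For each $x \in \mathcal{H}$, this yields a unique pair $(y,z) \in C \times C^{\perp}$ with $x = y+z$. The main task is to identify $y = \Pro_{C}(x)$ and $z = \Pro_{C^{\perp}}(x)$, after which both (i) and (ii) will follow quickly.

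For part (i), I would invoke the standard variational characterization of projections onto closed convex sets: $p = \Pro_{C}(x)$ if and only if $p \in C$ and $\innp{x-p,\,c-p} \leq 0$ for every $c \in C$. Since $C$ is a linear subspace, this inequality, applied to $p \pm c$, collapses to the equality $\innp{x-p,\,c} = 0$ for every $c \in C$, i.e., $x - p \in C^{\perp}$. Starting from $x = y+z$ with $y \in C$ and $z \in C^{\perp}$, we have $x - y = z \in C^{\perp}$, so $y = \Pro_{C}(x)$ by the characterization. An identical argument applied to $C^{\perp}$ (which is itself a closed linear subspace) gives $z = \Pro_{C^{\perp}}(x)$. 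Consequently $x = \Pro_{C}(x) + \Pro_{C^{\perp}}(x)$, establishing (i).

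For part (ii), each equivalence reduces to (i) together with uniqueness of the decomposition. If $\Pro_{C}(x) = 0$, then (i) gives $x = \Pro_{C^{\perp}}(x) \in C^{\perp}$; conversely, if $x \in C^{\perp}$, the decomposition $x = 0 + x$ together with uniqueness forces $\Pro_{C}(x) = 0$. The symmetric identity $C = \{x : \Pro_{C^{\perp}}(x) = 0\}$ is obtained by interchanging the roles of $C$ and $C^{\perp}$, using that $(C^{\perp})^{\perp} = C$ for closed subspaces. Finally, $C = \{x : \Pro_{C}(x) = x\}$ is immediate since any $x \in C$ is already its own closest point in $C$, and conversely $\Pro_{C}(x) = x$ forces $x \in C$ because the range of $\Pro_{C}$ is $C$.

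The only genuine obstacle is the orthogonal decomposition theorem itself, whose proof rests on the projection theorem for closed convex sets in Hilbert space (parallelogram-law based existence and uniqueness of the nearest-point in $C$), combined with the verification that $x - \Pro_{C}(x) \in C^{\perp}$. Once this foundational step is granted, the rest is bookkeeping with the characterization of the projection.
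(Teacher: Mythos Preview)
Your proof is correct and follows the standard route via the orthogonal decomposition $\mathcal{H}=C\oplus C^{\perp}$. Note that the paper does not supply its own proof of this statement: it is recorded as a \emph{Fact} with a citation to \cite[Theorem~5.8]{D2012}, so there is nothing to compare against beyond observing that your argument is essentially the textbook one found in that reference.
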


The following result is a mild extension {\rm \cite[Proposition~1]{BCS2017}}
and it is useful in the proof of \cref{thm:CCS:proper}.

\begin{proposition}  \label{prop:PR}
Let $C$ be a closed affine subspace of $\mathcal{H}$. Then the following hold:
\begin{enumerate}
\item \label{prop:PR:Affine} The projector $\Pro_{C}$ and the reflector $\R_{C}$ are affine operators.
\item \label{prop:PR:Characterization} Let $x$ be in $\mathcal{H}$ and let $p$ be in $\mathcal{H}$. Then 
\begin{align*}
p=\Pro_{C} x \Longleftrightarrow p \in C \quad \mbox{and} \quad (\forall v \in C) ~ (\forall w \in C) \quad \innp{x-p,v-w}=0.
\end{align*}
\item \label{prop:PR:Pythagoras} $(\forall x \in \mathcal{H})$  $(\forall v \in C)$ $\norm{x-\Pro_{C} x }^{2} + \norm{v-\Pro_{C} x}^{2}=\norm{x-v }^{2}$.
\item \label{prop:PR:isometry}  $(\forall x \in \mathcal{H})$ $(\forall y \in \mathcal{H})$ $\norm{x-y}=\norm{\R_{C}x-\R_{C}y}$.
\item \label{prop:PR:ReflectorEquidist} $(\forall x \in \mathcal{H})$ $(\forall v \in C)$ $\norm{x-v}=\norm{\R_{C}x-v}$.
\end{enumerate}
\end{proposition}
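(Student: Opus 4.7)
The overall plan is to reduce all five statements to the case of a closed linear subspace via a base-point translation. By \cref{fac:AffinePointLinearSpace}, fix any $c_{0} \in C$ and let $L := C - c_{0}$, which is a closed linear subspace of $\HH$. The first step I would carry out is to verify the identity
\begin{equation*}
(\forall x \in \HH) \quad \Pro_{C} x = c_{0} + \Pro_{L}(x - c_{0}),
\end{equation*}
which can be checked directly: the right-hand side lies in $C$, and for any $v \in C$ one has $v - c_{0} \in L$, so the minimizing property of $\Pro_{L}$ on $L$ gives $\norm{x - c_{0} - \Pro_{L}(x-c_{0})} \le \norm{(x-c_{0}) - (v-c_{0})} = \norm{x-v}$. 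This identity, together with the classical characterization $p_{0} = \Pro_{L}y \Longleftrightarrow p_{0} \in L \text{ and } y - p_{0} \perp L$, will be the workhorse for everything that follows.

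With the reduction in hand, \cref{prop:PR:Affine} is immediate: linearity of $\Pro_{L}$ yields affineness of $\Pro_{C}$, and then $\R_{C} = 2\Pro_{C} - \Id$ is affine as an affine combination of affine maps. For \cref{prop:PR:Characterization}, the reduction shows that $p = \Pro_{C}x$ iff $p - c_{0} = \Pro_{L}(x - c_{0})$, which is equivalent to $p \in C$ together with $x - p \perp L$; since $L = C - C$ by \cref{fac:AffinePointLinearSpace}, this orthogonality condition reads exactly $\innp{x-p, v-w} = 0$ for all $v,w \in C$. Then \cref{prop:PR:Pythagoras} is obtained by choosing $w = \Pro_{C}x$ in \cref{prop:PR:Characterization}, noting $\Pro_{C}x - v \in L$, and expanding $\norm{x-v}^{2} = \norm{(x - \Pro_{C}x) + (\Pro_{C}x - v)}^{2}$.

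For \cref{prop:PR:isometry}, the affineness established above implies $\R_{C}x - \R_{C}y = \R_{L}(x - y)$, and \cref{MetrProSubs8} combined with the Pythagorean identity gives $\norm{\R_{L}z}^{2} = \norm{\Pro_{L}z}^{2} + \norm{\Pro_{L^{\perp}}z}^{2} = \norm{z}^{2}$, so $\R_{L}$ is an isometry on $\HH$. Finally, \cref{prop:PR:ReflectorEquidist} follows by applying \cref{prop:PR:isometry} with $y = v$: since $\Pro_{C}v = v$ for $v \in C$, we have $\R_{C}v = v$, and hence $\norm{x-v} = \norm{\R_{C}x - \R_{C}v} = \norm{\R_{C}x - v}$. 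I do not anticipate any serious obstacle; the one place that needs a moment of care is the translation identity $\Pro_{C}x = c_{0} + \Pro_{L}(x - c_{0})$, after which every remaining claim is a short computation.
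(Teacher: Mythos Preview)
Your proof is correct. For \cref{prop:PR:Pythagoras} and \cref{prop:PR:ReflectorEquidist} your argument is essentially identical to the paper's. The genuine difference lies in \cref{prop:PR:Affine}, \cref{prop:PR:Characterization}, and \cref{prop:PR:isometry}: the paper dispatches \cref{prop:PR:Affine} and \cref{prop:PR:Characterization} by citing \cite[Corollary~3.22]{BC2017}, and proves \cref{prop:PR:isometry} by a direct inner-product chain of equivalences starting from the orthogonality in \cref{prop:PR:Characterization}. You instead build everything from the single translation identity $\Pro_{C}x = c_{0} + \Pro_{L}(x-c_{0})$, reducing each item to the linear-subspace setting where \cref{MetrProSubs8} applies; in particular your proof of \cref{prop:PR:isometry} via $\R_{C}x - \R_{C}y = \R_{L}(x-y)$ and $\|\R_{L}z\|^{2} = \|\Pro_{L}z\|^{2} + \|\Pro_{L^{\perp}}z\|^{2}$ is cleaner than the paper's computation. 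The trade-off is that your route is more self-contained (it uses only \cref{fac:AffinePointLinearSpace} and \cref{MetrProSubs8}, both already stated in the paper), whereas the paper's route is shorter on the page by outsourcing the affine-projection facts to \cite{BC2017}.
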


\begin{proof}
\cref{prop:PR:Affine}: $\Pro_{C}$ is affine by {\rm \cite[Corollary~3.22(ii)]{BC2017}}; 
this implies that $\R_{C}=2\Pro_{C}- \Id$ is affine as well. 

\cref{prop:PR:Characterization}: {\rm \cite[Corollary~3.22(i)]{BC2017}}.

\cref{prop:PR:Pythagoras}: Indeed, for every $x \in \mathcal{H}$ and $v \in C$,
\begin{align*} 
\norm{x-v }^{2}&=\norm{x-\Pro_{C}x-(v-\Pro_{C}x)}^{2}\\
&=\norm{x-\Pro_{C} x }^{2} -2 \innp{x-\Pro_{C}x, v -\Pro_{C}x} + \norm{v - \Pro_{C} x}^{2}\\
&=\norm{x-\Pro_{C} x }^{2} + \norm{v - \Pro_{C} x }^{2}. \quad (\text{by \cref{prop:PR:Characterization}})
\end{align*}

\cref{prop:PR:isometry}: For every $x\in \mathcal{H}$, and for every $y \in \mathcal{H}$, by \cref{prop:PR:Characterization},
\begin{align*}
& \quad \quad ~~ \innp{\Pro_{C}x-\Pro_{C}y,\Pro_{C}x-x }- \innp{\Pro_{C}x-\Pro_{C}y,\Pro_{C}y-y }=0 \\
& \Longleftrightarrow \innp{\Pro_{C}x-\Pro_{C}y,\Pro_{C}x-\Pro_{C}y -(x-y) }=0 \\
& \Longleftrightarrow \norm{x-y}^{2} = 4 \norm{\Pro_{C}x-\Pro_{C}y }^{2} -4\innp{\Pro_{C}x-\Pro_{C}y,x-y } +\norm{x-y}^{2}\\
& \Longleftrightarrow \norm{x-y}^{2} = \norm{ (2 \Pro_{C}x-x)  -(2  \Pro_{C}y -y)}^{2}\\
& \Longleftrightarrow  \norm{x-y}=\norm{\R_{C}x-\R_{C}y}.  \quad (\text{by}~\R_{C}=2\Pro_{C}- \Id)
\end{align*}

\cref{prop:PR:ReflectorEquidist}:  Notice that $\Fix \R_{C} =C$ and then use \cref{prop:PR:isometry}.
\end{proof}

\subsection{Circumcenters} \label{Sec:Subsec:CircOpe}
In the whole subsection,
\begin{empheq}[box=\mybluebox]{equation*}
\mathcal{P}(\mathcal{H})~\text{is the set of all nonempty 
subsets of}~\mathcal{H}~\text{containing \emph{finitely many}
elements}.
\end{empheq}
By {\rm \cite[Proposition~3.3]{BOyW2018}}, we know that for every $K \in \mathcal{P}(\mathcal{H})$, there is at most one point $p \in \aff (K) $ such that $\{\norm{p-x} ~|~x \in K \}$ is a singleton. Hence, the following notion is well-defined.

\begin{definition}[circumcenter operator]  {\rm \cite[Definition~3.4]{BOyW2018}} \label{defn:Circumcenter}
The \emph{circumcenter operator} is 
\begin{empheq}[box=\mybluebox]{equation*}
\CCO{} \colon \mathcal{P}(\mathcal{H}) \to \mathcal{H} \cup \{ \varnothing \} \colon K \mapsto \begin{cases} p, \quad ~\text{if}~p \in \aff (K)~\text{and}~\{\norm{p-x} ~|~x \in K \}~\text{is a singleton};\\
\varnothing, \quad~ \text{otherwise}.
\end{cases}
\end{empheq}
In particular, when $\CCO(K) \in \mathcal{H}$, that is, $\CCO(K) \neq \varnothing$, we say that the circumcenter of $K$ exists and we call $\CCO(K)$ the circumcenter of $K$.
\end{definition}

\begin{fact}{\rm \cite[Example~3.6]{BOyW2018}}  \label{fact:CircForTwoPoints}
Let $x_1,x_2$ be in $\mathcal{H}$. 
Then 
\begin{align*}
\CCO{\big(\{x_1,x_2\}\big)}=\frac{x_{1} + x_{2}}{2}.
\end{align*} 
\end{fact}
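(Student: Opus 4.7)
The plan is to verify directly that $p := \tfrac{1}{2}(x_1 + x_2)$ satisfies the two defining conditions of the circumcenter in \cref{defn:Circumcenter}, invoking the uniqueness statement that precedes that definition to conclude that $\CCO(\{x_1,x_2\}) = p$.

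First, I would check the affine-hull condition. Writing $p = \tfrac{1}{2} x_1 + \tfrac{1}{2} x_2$ and noting $\tfrac{1}{2} + \tfrac{1}{2} = 1$, \cref{fac:AffSubsExpre} immediately gives $p \in \aff\{x_1,x_2\}$.

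Next, I would check the equidistance condition. A one-line computation shows
\begin{equation*}
\norm{p - x_1} = \NNorm{\tfrac{x_2 - x_1}{2}} = \tfrac{1}{2}\norm{x_1 - x_2} = \NNorm{\tfrac{x_1 - x_2}{2}} = \norm{p - x_2},
\end{equation*}
so $\{\norm{p - x} \mid x \in \{x_1,x_2\}\}$ is the singleton $\{\tfrac{1}{2}\norm{x_1-x_2}\}$.

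Since the paragraph preceding \cref{defn:Circumcenter} asserts that at most one point of $\aff(K)$ can be equidistant from the elements of $K$, the two checks above force $\CCO(\{x_1,x_2\}) = p = \tfrac{1}{2}(x_1+x_2)$. There is no genuine obstacle here; the only thing to be careful about is quoting the uniqueness remark so that the assignment in \cref{defn:Circumcenter} is unambiguously applied.
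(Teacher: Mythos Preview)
Your proposal is correct. Note that the paper does not actually prove this statement itself; it is recorded as a Fact with a citation to \cite[Example~3.6]{BOyW2018}, and your direct verification of the two defining conditions in \cref{defn:Circumcenter} together with the uniqueness remark is exactly the natural argument one would supply.
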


\begin{fact} {\rm \cite[Theorem~4.1]{BOyW2018}} \label{fact:unique:LinIndpPformula}
Let $K=\{x_{1}, \ldots, x_{m}\} \in \mathcal{P}(\mathcal{H})$, where $x_{1}, \ldots, x_{m}$ are affinely independent.
Then $\CCO(K) \in \mathcal{H}$, which means that $\CCO(K)$ 
is the unique point satisfying the following two conditions:
\begin{enumerate}
\item \label{prop:unique:i} $\CCO(K) \in \aff (K)$, and
\item  \label{prop:unique:ii} $\{ \norm{\CCO(K)-y}~|~y \in K \}$ is a singleton.
\end{enumerate}
Moreover,
\begin{align*}
\CCO(K) = x_{1}+\frac{1}{2}(x_{2}-x_{1},\ldots,x_{m}-x_{1})
 G( x_{2}-x_{1},\ldots,x_{m}-x_{1})^{-1}
\begin{pmatrix}
 \norm{x_{2}-x_{1}}^{2} \\
 \vdots\\
\norm{x_{m}-x_{1}}^{2} \\
\end{pmatrix},
\end{align*}
where $G( x_{2}-x_{1},\ldots,x_{m-1}-x_{1},x_{m}-x_{1})$ is the
\emph{Gram matrix} of $x_{2}-x_{1},\ldots,x_{m-1}-x_{1},x_{m}-x_{1}$, i.e.,
\begin{align*}
&\quad ~ G( x_{2}-x_{1},\ldots, x_{m-1}-x_{1},x_{m}-x_{1})\\
&=
\begin{pmatrix} 
\norm{x_{2}-x_{1}}^{2} &\innp{x_{2}-x_{1},x_{3}-x_{1}} & \cdots & \innp{x_{2}-x_{1}, x_{m}-x_{1}}  \\ 
\vdots & \vdots & ~~& \vdots \\
\innp{x_{m-1}-x_{1},x_{2}-x_{1}} & \innp{x_{m-1}-x_{1}, x_{3}-x_{1}} & \cdots & \innp{x_{m-1}-x_{1},x_{m}-x_{1}} \\
\innp{x_{m}-x_{1},x_{2}-x_{1}} & \innp{x_{m}-x_{1},x_{3}-x_{1}} & \cdots & \norm{x_{m}-x_{1}}^{2} \\
\end{pmatrix}.
\end{align*}
\end{fact}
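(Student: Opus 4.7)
The plan is to parametrize a candidate circumcenter $p \in \aff(K)$ using \cref{fact:AffineHull}, reduce the equidistance condition to a linear system whose coefficient matrix is the Gram matrix $G := G(x_2-x_1,\ldots,x_m-x_1)$, and then invoke affine independence to invert $G$ and read off the formula. Uniqueness of $\CCO(K)$ (once it lies in $\mathcal{H}$) is already built into \cref{defn:Circumcenter}, so the task reduces to producing a solution and recognizing it.

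First, by \cref{fact:AffineHull}, every $p \in \aff(K)$ can be written uniquely as $p = x_1 + \sum_{j=2}^{m}\alpha_j(x_j-x_1)$ for scalars $\alpha_2,\ldots,\alpha_m \in \mathbb{R}$. I would then observe that the singleton condition \cref{prop:unique:ii} is equivalent to the $m-1$ equations $\norm{p-x_1}^2 = \norm{p-x_i}^2$ for $i=2,\ldots,m$. Expanding $\norm{p-x_i}^2 = \norm{(p-x_1)-(x_i-x_1)}^2$ and cancelling $\norm{p-x_1}^2$ from both sides, each equation simplifies to the linear condition
\begin{align*}
2\innp{p-x_1,\,x_i-x_1} = \norm{x_i-x_1}^2, \qquad i=2,\ldots,m.
\end{align*}

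Substituting the parametrization of $p-x_1$ turns the system into
\begin{align*}
2\sum_{j=2}^{m}\alpha_j \innp{x_j-x_1,\,x_i-x_1} = \norm{x_i-x_1}^2 \qquad (i=2,\ldots,m),
\end{align*}
which in matrix form reads $2\,G\,(\alpha_2,\ldots,\alpha_m)^{\top} = (\norm{x_2-x_1}^2,\ldots,\norm{x_m-x_1}^2)^{\top}$.

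Next I would use \cref{fac:AffinIndeLineInd}: affine independence of $x_1,\ldots,x_m$ is equivalent to linear independence of $x_2-x_1,\ldots,x_m-x_1$, which is in turn equivalent to positive-definiteness (hence invertibility) of the Gram matrix $G$. Therefore $(\alpha_2,\ldots,\alpha_m)^{\top} = \tfrac{1}{2}\,G^{-1}(\norm{x_2-x_1}^2,\ldots,\norm{x_m-x_1}^2)^{\top}$ is the unique solution, and substituting back gives
\begin{align*}
p = x_1 + \tfrac{1}{2}(x_2-x_1,\ldots,x_m-x_1)\,G^{-1}\,(\norm{x_2-x_1}^2,\ldots,\norm{x_m-x_1}^2)^{\top},
\end{align*}
which lies in $\aff(K)$ by construction and satisfies \cref{prop:unique:ii}. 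Hence $\CCO(K) \in \mathcal{H}$ and equals the displayed expression; uniqueness was noted before \cref{defn:Circumcenter}.

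No step here looks genuinely hard; the only point demanding care is the reduction from the quadratic equidistance conditions to the linear Gram system (making sure the $\norm{p-x_1}^2$ terms really cancel symmetrically) and the clean identification of invertibility of $G$ with affine independence via \cref{fac:AffinIndeLineInd}. If I wanted to dress the result up, I would note in passing that the formula is coordinate-free in the sense that choosing a different base point $x_{i_0}$ (as permitted by \cref{fact:AffineHull}) yields the same point $p$, but that symmetry is not needed for the proof itself.
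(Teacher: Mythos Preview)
Your argument is correct and is exactly the standard derivation: parametrize $p\in\aff(K)$ via \cref{fact:AffineHull}, reduce the equidistance constraints to the linear Gram system, and invert using affine independence and \cref{fac:AffinIndeLineInd}. Note, however, that the paper does not supply its own proof of this statement---it is recorded as a \emph{Fact} quoted from \cite[Theorem~4.1]{BOyW2018}---so there is nothing here to compare your approach against; your write-up stands on its own and would serve perfectly well as a self-contained proof of the cited result.
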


\begin{fact}  {\rm \cite[Theorem~8.1]{BOyW2018}} \label{fact:clform:three}
Suppose that 
$K=\{x,y,z\} \in \mathcal{P}(\mathcal{H})$ and that $\card (K) =3$. 
Then $x, y, z$ are affinely independent if and only if 
$\CCO(K) \in \mathcal{H}$. 
\end{fact}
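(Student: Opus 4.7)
My plan is to handle the two implications separately, with the forward direction being essentially a direct citation and the reverse direction requiring a short explicit computation on a line.

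For the forward implication ($\Rightarrow$), if $x, y, z$ are affinely independent, then I would invoke \cref{fact:unique:LinIndpPformula} with $m = 3$ directly: it guarantees that $\CCO(K) \in \mathcal{H}$ (and even gives a closed-form expression for it). No further work is needed.

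For the reverse implication ($\Leftarrow$), I would argue the contrapositive: assume $x, y, z$ are three distinct (by $\card(K) = 3$) but affinely dependent points, and show $\CCO(K) = \varnothing$. By \cref{fac:AffinIndeLineInd}, affine dependence is equivalent to $y - x$ and $z - x$ being linearly dependent. Together with distinctness (so $y \neq x$ and $z \neq x$), this forces $z - x = \lambda (y - x)$ for some $\lambda \in \mathbb{R} \setminus \{0, 1\}$; in particular $\aff(K) = \aff\{x, y\}$, which is the line through $x$ and $y$. Suppose for a contradiction that $p := \CCO(K) \in \mathcal{H}$. Writing $p = x + t(y - x)$ for some $t \in \mathbb{R}$, the three distances become
\begin{equation*}
\norm{p - x} = \abs{t}\, \norm{y - x}, \quad \norm{p - y} = \abs{t - 1}\, \norm{y - x}, \quad \norm{p - z} = \abs{t - \lambda}\, \norm{y - x}.
\end{equation*}
The equation $\norm{p - x} = \norm{p - y}$ forces $t = 1/2$, and then $\norm{p - x} = \norm{p - z}$ forces $\abs{1/2 - \lambda} = 1/2$, i.e.\ $\lambda \in \{0, 1\}$, a contradiction.

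The main (and really only) obstacle is correctly setting up the parametrization on the one-dimensional affine hull and being careful that the three points are pairwise distinct; everything else is a single-line contradiction from $\abs{t} = \abs{t - 1} = \abs{t - \lambda}$. Both directions together then give the claimed equivalence.
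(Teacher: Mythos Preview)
Your proof is correct. Note, however, that the paper does not supply its own proof of this statement: it is recorded as a \emph{Fact} cited from \cite[Theorem~8.1]{BOyW2018}, so there is no in-paper argument to compare against. Your approach---invoking \cref{fact:unique:LinIndpPformula} for the forward direction and, for the contrapositive, parametrizing the line $\aff(K)$ and reducing equidistance to $|t|=|t-1|=|t-\lambda|$---is a clean, self-contained argument. In fact, the one-dimensional impossibility you derive is exactly the content of \cref{cor:mathbbRcard3}, which the paper obtains as a \emph{consequence} of \cref{fact:clform:three}; you are essentially proving that corollary from scratch inside your contrapositive step, which makes your route slightly more elementary (it does not rely on the external reference) at the cost of a short explicit computation.
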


Combining \cref{fact:CircForTwoPoints} and \cref{fact:clform:three}, we
obtain the following two results. 
\begin{corollary} \label{cor:x1x2x3Dom}
Let $K=\{x_{1},  x_{2}, x_{3}\} \in \mathcal{P}(\mathcal{H})$. Then $\CCO(K) \in \mathcal{H}$ if and only if exactly one of the following cases holds.
\begin{enumerate}
\item $\card \{x_{1}, x_{2},x_{3} \} \leq 2$.
\item $\card \{x_{1}, x_{2},x_{3}\}=3$ and if there is $\{\alpha, \beta\} \subseteq \mathbb{R}$ such that $\alpha (x_{2} -x_{1}) +\beta (x_{3}-x_{1})=0$, then $\alpha =0$ and $\beta =0$.
\end{enumerate} 
\end{corollary}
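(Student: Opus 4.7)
The plan is to reduce everything to previously established facts by splitting on $\card(K) \in \{1,2,3\}$. Note first that cases (i) and (ii) are mutually exclusive because they impose incompatible conditions on $\card(K)$, so ``exactly one holds'' reduces to showing that $\CCO(K) \in \mathcal{H}$ is equivalent to ``(i) or (ii) holds''.

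For the direction assuming (i) or (ii), I would argue as follows. If $\card(K) = 1$, say $K = \{x_1\}$, then $\aff(K) = \{x_1\}$ and $\{\norm{x_1 - x}\,|\,x \in K\} = \{0\}$ is a singleton, so $\CCO(K) = x_1 \in \mathcal{H}$. If $\card(K) = 2$, \cref{fact:CircForTwoPoints} gives the circumcenter as the midpoint, which lies in $\mathcal{H}$. If (ii) holds, then $\card(K) = 3$ and the hypothesis says exactly that $x_2 - x_1, x_3 - x_1$ are linearly independent. By \cref{fac:AffinIndeLineInd}, $x_1, x_2, x_3$ are then affinely independent, and \cref{fact:clform:three} yields $\CCO(K) \in \mathcal{H}$.

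For the converse, suppose $\CCO(K) \in \mathcal{H}$. If $\card(K) \leq 2$, then (i) holds. Otherwise $\card(K) = 3$, and by \cref{fact:clform:three} the points $x_1, x_2, x_3$ must be affinely independent. Invoking \cref{fac:AffinIndeLineInd} again, $x_2 - x_1$ and $x_3 - x_1$ are linearly independent, which is precisely the statement that $\alpha(x_2 - x_1) + \beta(x_3 - x_1) = 0$ forces $\alpha = \beta = 0$; thus (ii) holds.

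There is no real obstacle here: the corollary is a direct bookkeeping consequence of \cref{fact:CircForTwoPoints,fact:clform:three,fac:AffinIndeLineInd}, with the only minor point being to verify the singleton case directly from \cref{defn:Circumcenter} since the cited results cover only doubletons and affinely independent triples.
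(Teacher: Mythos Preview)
Your proof is correct and takes essentially the same approach as the paper, which simply says the result follows by combining \cref{fact:CircForTwoPoints} and \cref{fact:clform:three}. You have just spelled out the case split in more detail and made explicit the translation, via \cref{fac:AffinIndeLineInd}, between condition (ii) and affine independence.
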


\begin{corollary} \label{cor:mathbbRcard3}
Let $a,b,c$ be in $\mathbb{R}$. Then there exists no $x \in \mathbb{R}$ such that $|x-a|=|x-b|=|x -c|$ if and only if $\card \{a,b,c\} =3$.
\end{corollary}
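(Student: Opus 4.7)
The plan is to reduce the statement to \cref{cor:x1x2x3Dom} applied to the set $K = \{a,b,c\}$ viewed inside the one-dimensional real Hilbert space $\mathbb{R}$. First I would translate the ``no such $x$'' clause into the language of \cref{defn:Circumcenter}. Observing that if $\card(K) \geq 2$ then $\aff(K) = \mathbb{R}$ (because $\mathbb{R}$ is $1$-dimensional), and that if $\card(K) = 1$ then every $x \in \mathbb{R}$ trivially satisfies the equidistance condition, I conclude that the existence of $x \in \mathbb{R}$ with $|x-a| = |x-b| = |x-c|$ is equivalent to $\CCO(K) \in \mathbb{R}$. Thus it suffices to prove that $\CCO(K) \in \mathbb{R}$ if and only if $\card\{a,b,c\} \leq 2$.

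Next I would dispose of the two directions using \cref{cor:x1x2x3Dom}. If $\card\{a,b,c\} \leq 2$, then case~(i) of \cref{cor:x1x2x3Dom} applies, giving $\CCO(K) \in \mathbb{R}$. Conversely, if $\card\{a,b,c\} = 3$, I would show case~(ii) of \cref{cor:x1x2x3Dom} must fail. To see this, set $\alpha := c-a$ and $\beta := -(b-a)$; both are nonzero since $a,b,c$ are pairwise distinct, yet
\begin{align*}
\alpha(b-a) + \beta(c-a) = (c-a)(b-a) - (b-a)(c-a) = 0.
\end{align*}
Thus case~(ii) cannot hold, and since case~(i) fails by assumption, \cref{cor:x1x2x3Dom} forces $\CCO(K) = \varnothing$, so no equidistant $x$ exists.

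I do not anticipate any substantive obstacle; the statement is essentially the $1$-dimensional shadow of \cref{cor:x1x2x3Dom}. The only mild subtlety is the translation between ``$x \in \mathbb{R}$'' and ``$p \in \aff(K)$'', which collapses precisely because $\mathbb{R}$ is itself one-dimensional, and the explicit verification that three distinct real numbers are never affinely independent.
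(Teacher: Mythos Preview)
Your proposal is correct and essentially mirrors the paper's approach: the paper derives both \cref{cor:x1x2x3Dom} and \cref{cor:mathbbRcard3} in one stroke from \cref{fact:CircForTwoPoints} and \cref{fact:clform:three}, while you route through \cref{cor:x1x2x3Dom} instead of citing those two facts directly---a cosmetic difference only, since \cref{cor:x1x2x3Dom} is just \cref{fact:clform:three} with the affine-independence condition unpacked. Your translation step (identifying $\aff(K)=\mathbb{R}$ when $\card(K)\geq 2$) and the explicit witness $(\alpha,\beta)=(c-a,-(b-a))$ make precise exactly what the paper leaves implicit.
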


\begin{fact}[scalar multiples]  {\rm \cite[Proposition~6.1]{BOyW2018}} \label{fact:CircumHomoge}
Let $K \in \mathcal{P}(\mathcal{H})$ and $\lambda \in \mathbb{R} \smallsetminus \{0\}$. 
Then 
$\CCO(\lambda K)=\lambda \CCO(K)$. 
\end{fact}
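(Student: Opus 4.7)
The plan is to use the characterization of the circumcenter as the unique element of $\aff(K)$ equidistant from every point of $K$, together with the fact that nonzero scaling preserves both affine combinations and ratios of distances. Writing $K = \{x_{1}, \ldots, x_{m}\}$ so that $\lambda K = \{\lambda x_{1}, \ldots, \lambda x_{m}\}$, the two observations I will invoke are: (a) every affine combination $\sum_{i} \mu_{i} x_{i}$ with $\sum_{i} \mu_{i} = 1$ scales coefficientwise to $\sum_{i} \mu_{i} (\lambda x_{i})$, so $\aff(\lambda K) = \lambda \aff(K)$; and (b) $\|\lambda p - \lambda x_{i}\| = |\lambda|\,\|p - x_{i}\|$, so the set $\{\|\lambda p - y\| : y \in \lambda K\}$ is a singleton if and only if $\{\|p - x_{i}\| : i = 1, \ldots, m\}$ is.

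First, I would treat the case $\CCO(K) \in \mathcal{H}$. Set $p := \CCO(K)$. By \cref{defn:Circumcenter}, $p \in \aff(K)$ and $\{\|p - x_{i}\| : i = 1, \ldots, m\}$ is a singleton. Observations (a) and (b) then yield $\lambda p \in \aff(\lambda K)$ and $\{\|\lambda p - y\| : y \in \lambda K\}$ is a singleton. The uniqueness remark preceding \cref{defn:Circumcenter} guarantees that at most one such point can exist in $\aff(\lambda K)$, so $\CCO(\lambda K) = \lambda p = \lambda \CCO(K)$.

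For the remaining case $\CCO(K) = \varnothing$, I would argue by symmetry. Suppose toward a contradiction that $\CCO(\lambda K) \in \mathcal{H}$. Since $\lambda \neq 0$, we have $K = (1/\lambda)(\lambda K)$, so applying the previous paragraph with $\lambda K$ and $1/\lambda$ in place of $K$ and $\lambda$ would give $\CCO(K) = (1/\lambda)\CCO(\lambda K) \in \mathcal{H}$, contradicting $\CCO(K) = \varnothing$. Hence $\CCO(\lambda K) = \varnothing$.

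There is no genuinely hard step. The hypothesis $\lambda \neq 0$ enters exactly once, namely to invert the scaling so that the empty-value case can be reduced to the nonempty one; without it the identity would break down, since $\CCO(0 \cdot K) = 0$ holds for every $K$ regardless of whether $\CCO(K)$ exists.
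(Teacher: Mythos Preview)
Your proof is correct. Note, however, that the paper does not supply its own proof of this statement: it is recorded as a \emph{Fact} with a citation to \cite[Proposition~6.1]{BOyW2018}, so there is no in-paper argument to compare against. Your approach---reducing to the defining characterization in \cref{defn:Circumcenter} via the two elementary observations $\aff(\lambda K)=\lambda\aff(K)$ and $\|\lambda p-\lambda x_i\|=|\lambda|\,\|p-x_i\|$, then handling the $\varnothing$ case by inverting the scaling---is exactly the natural one and is presumably what the cited reference does as well.
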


\begin{fact}[translations]  {\rm \cite[Proposition~6.3]{BOyW2018}} \label{fact:CircumSubaddi}
Let $K \in \mathcal{P}(\mathcal{H})$ and $y \in \mathcal{H}$. Then
$\CCO(K+y)=\CCO(K)+y$.
\end{fact}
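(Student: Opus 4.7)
The plan is to unfold the definition of $\CCO$ and exploit the fact that translation by $y$ is an affine bijection of $\mathcal{H}$ that preserves both of the conditions characterizing the circumcenter in \cref{defn:Circumcenter}.

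First I would record the elementary identity $\aff(K+y)=\aff(K)+y$, which falls out immediately from \cref{fac:AffSubsExpre}: writing $K=\{x_{1},\ldots,x_{m}\}$, a generic element of $\aff(K+y)$ is $\sum_{i}\lambda_{i}(x_{i}+y)$ with $\sum_{i}\lambda_{i}=1$, and this equals $\bigl(\sum_{i}\lambda_{i}x_{i}\bigr)+y\in\aff(K)+y$; the reverse inclusion is symmetric. Second, I would note the obvious distance invariance: for any $p\in\mathcal{H}$ and any $x\in K$,
\begin{equation*}
\norm{(p+y)-(x+y)}=\norm{p-x},
\end{equation*}
so $\{\norm{p-x}\mid x\in K\}$ is a singleton if and only if $\{\norm{(p+y)-(x+y)}\mid x\in K+y\}$ is a singleton.

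With these two observations in hand, the proof splits into two symmetric cases. If $\CCO(K)\in\mathcal{H}$, set $p:=\CCO(K)$. Then $p\in\aff(K)$, so $p+y\in\aff(K)+y=\aff(K+y)$; combined with the distance invariance, $p+y$ satisfies both defining conditions for $\CCO(K+y)$, and uniqueness (\cite[Proposition~3.3]{BOyW2018}, as cited just before \cref{defn:Circumcenter}) gives $\CCO(K+y)=p+y=\CCO(K)+y$. If instead $\CCO(K)=\varnothing$, I would argue by contradiction: if $\CCO(K+y)\in\mathcal{H}$, then applying the same reasoning with $-y$ to the set $K+y$ would yield $\CCO(K)=\CCO((K+y)+(-y))=\CCO(K+y)-y\in\mathcal{H}$, a contradiction. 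Hence $\CCO(K+y)=\varnothing=\CCO(K)+y$ under the natural convention $\varnothing+y=\varnothing$.

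There is no real obstacle here; the only subtlety is being careful about the empty-set case, which is handled cleanly by applying the translation in both directions to see that the property \enquote{$\CCO$ exists} is preserved under translation.
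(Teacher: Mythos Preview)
Your proof is correct. Note, however, that the paper does not supply its own proof of this statement: it is recorded as a \emph{Fact} and simply cited from \cite[Proposition~6.3]{BOyW2018}. Your argument---translation preserves affine hulls (via \cref{fac:AffSubsExpre}) and distances, then appeal to uniqueness for the existence case and a symmetric contrapositive for the $\varnothing$ case---is the natural direct proof and is essentially what one would expect the cited source to contain.
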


\begin{fact} {\rm \cite[Lemma~4.2]{BOyW2018}} \label{fact:unique:BasisPformula}
Let $K \in \mathcal{P}(\mathcal{H})$ and let $M \subseteq K$ be such that $\aff(M)=\aff(K)$. Suppose that $\CCO(K) \in \mathcal{H}$. 
Then $\CCO(K)=\CCO(M)$.
\end{fact}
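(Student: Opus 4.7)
The plan is to directly verify that the point $p := \CCO(K)$ satisfies the two defining conditions of being the circumcenter of $M$, and then invoke the uniqueness of the circumcenter (the statement made just before \cref{defn:Circumcenter}, i.e., that at most one point in $\aff(K)$ can be equidistant from all elements of $K$).

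Concretely, I would proceed as follows. First, since by hypothesis $\CCO(K) \in \mathcal{H}$, the point $p = \CCO(K)$ lies in $\aff(K)$ and the set $\{\norm{p-x} \mid x \in K\}$ is a singleton. Second, since $M \subseteq K$ and $M$ is nonempty (note that $M$ must be nonempty, since $\aff(M) = \aff(K)$ and $K$ is nonempty so $\aff(K) \neq \varnothing$), the restricted set $\{\norm{p-x} \mid x \in M\}$ is a nonempty subset of a singleton, hence itself a singleton. Third, the hypothesis $\aff(M) = \aff(K)$ gives $p \in \aff(M)$. Therefore $p$ satisfies both properties \cref{prop:unique:i} and \cref{prop:unique:ii} of \cref{fact:unique:LinIndpPformula} with $K$ replaced by $M$, so by the uniqueness of the circumcenter we conclude $\CCO(M) = p = \CCO(K)$.

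There is essentially no obstacle here: the argument is simply the observation that the two defining conditions of the circumcenter are stable under passing from $K$ to a subset $M$ whose affine hull agrees with that of $K$. The only subtlety to mention is that $M$ must be nonempty and finite so that $M \in \mathcal{P}(\mathcal{H})$, but finiteness follows from $M \subseteq K$ and nonemptiness from $\aff(M) = \aff(K) \neq \varnothing$.
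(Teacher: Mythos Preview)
Your argument is correct. Note, however, that the paper does not actually supply a proof of this statement: it is recorded as a \emph{Fact} with a citation to {\rm \cite[Lemma~4.2]{BOyW2018}}, so there is no in-paper proof to compare against. Your verification---checking that $p=\CCO(K)$ lies in $\aff(M)=\aff(K)$ and is equidistant from the points of $M\subseteq K$, then invoking uniqueness from \cref{defn:Circumcenter}---is exactly the natural direct argument and is presumably what the cited lemma does as well.
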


\begin{fact}  {\rm \cite[Theorem~7.1]{BOyW2018}}  \label{fact:CCO:LimitCont}
Let $K=\{x_{1}, \ldots, x_{m}\} \in \mathcal{P}(\mathcal{H})$. Suppose that $\CCO(K) \in \mathcal{H}$. 
Then the following hold.
\begin{enumerate}
\item \label{prop:CCO:LimitCont:Linear}   
Set $t=\dim \big( \spn\{x_{2}-x_{1}, \ldots, x_{m}-x_{1}\} \big)$, 
and let $\widetilde{K}=\{x_{1}, x_{i_{1}}, \ldots, x_{i_{t}}\}
\subseteq K$ be such that $x_{i_{1}} -x_{1}, \ldots, x_{i_{t}}-x_{1}$ is a
basis of $\spn\{x_{2}-x_{1}, \ldots, x_{m}-x_{1}\}$.
Furthermore, let $\big( (x^{(k)}_{1}, x^{(k)}_{i_{1}}, \ldots,
x^{(k)}_{i_{t}}) \big)_{k \geq 1}$ $\subseteq$
$\mathcal{H}^{t+1}$ with $\lim_{k\rightarrow \infty}(
x^{(k)}_{1}, x^{(k)}_{i_{1}}, \ldots, x^{(k)}_{i_{t}})=(x_{1},
x_{i_{1}}, \ldots, x_{i_{t}})$, and 
set $(\forall k \geq 1)$ $\widetilde{K}^{(k)} = \{x^{(k)}_{1}, x^{(k)}_{i_{1}}, \ldots, x^{(k)}_{i_{t}}\}$. Then there exist $N \in \mathbb{N}$ such that for every $k \geq N$, $\CCO(\widetilde{K}^{(k)}) \in \mathcal{H}$ and
\begin{align*}
\lim_{k \rightarrow \infty} \CCO(\widetilde{K}^{(k)})= \CCO(\widetilde{K})=\CCO(K).
\end{align*}
\item  \label{prop:CCO:LimitCont:LinearIndep} 
Suppose that $ x_{1}, \ldots, x_{m-1}, x_{m}$ 
are affinely independent, 
and let $ \big( (x^{(k)}_{1}, \ldots, x^{(k)}_{m-1}, x^{(k)}_{m})
\big)_{k \geq 1}$ $\subseteq$  $\mathcal{H}^{m} $ satisfy
$\lim_{k\rightarrow \infty}( x^{(k)}_{1}, \ldots,x^{(k)}_{m-1},
x^{(k)}_{m})=(x_{1}, \ldots, x_{m-1},x_{m})$. Set $(\forall k \geq 1)$ $K^{(k)}=\{x^{(k)}_{1}, \ldots, x^{(k)}_{m-1}, x^{(k)}_{m}\}$. Then 
\begin{align*}
\lim_{k \rightarrow \infty} \CCO( K^{(k)} )= \CCO( K ).
\end{align*}
\end{enumerate}
\end{fact}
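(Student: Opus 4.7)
The plan is to prove part~\ref{prop:CCO:LimitCont:LinearIndep} first, directly from the closed-form expression in \cref{fact:unique:LinIndpPformula}, and then deduce part~\ref{prop:CCO:LimitCont:Linear} by reducing to part~\ref{prop:CCO:LimitCont:LinearIndep} via \cref{fact:unique:BasisPformula}. The guiding observation is that, in the formula of \cref{fact:unique:LinIndpPformula}, the Gram matrix $G(x_2 - x_1, \ldots, x_m - x_1)$ is invertible precisely because those displacement vectors are linearly independent; invertibility is an open condition, and both the Gram matrix and the right-hand side vector have entries that depend continuously on the tuple of points.

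For part~\ref{prop:CCO:LimitCont:LinearIndep}, affine independence of $x_1, \ldots, x_m$ together with \cref{fac:AffinIndeLineInd} yields linear independence of $x_2 - x_1, \ldots, x_m - x_1$, so $G := G(x_2 - x_1, \ldots, x_m - x_1)$ is positive definite and in particular $\det G \neq 0$. Writing $G^{(k)}$ for the Gram matrix of $x_2^{(k)} - x_1^{(k)}, \ldots, x_m^{(k)} - x_1^{(k)}$, each entry of $G^{(k)}$ is a continuous function of the tuple, so $G^{(k)} \to G$ and hence $\det G^{(k)} \to \det G \neq 0$. Therefore there exists $N$ such that $G^{(k)}$ is invertible for all $k \geq N$; by \cref{fac:AffinIndeLineInd} this means $x_1^{(k)}, \ldots, x_m^{(k)}$ are affinely independent, so \cref{fact:unique:LinIndpPformula} applies to $K^{(k)}$. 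Continuity of matrix inversion on the open set of invertible matrices, combined with continuity of the norms $\norm{x_i^{(k)} - x_1^{(k)}}^2$, then gives $\CCO(K^{(k)}) \to \CCO(K)$.

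For part~\ref{prop:CCO:LimitCont:Linear}, the choice of $\widetilde{K} = \{x_1, x_{i_1}, \ldots, x_{i_t}\}$ ensures $\aff(\widetilde{K}) = \aff(K)$ because $x_{i_1} - x_1, \ldots, x_{i_t} - x_1$ spans the same subspace as $x_2 - x_1, \ldots, x_m - x_1$ via \cref{fact:AffineHull}. The hypothesis $\CCO(K) \in \mathcal{H}$ combined with \cref{fact:unique:BasisPformula} then gives $\CCO(K) = \CCO(\widetilde{K})$. By construction the displacements in $\widetilde{K}$ form a basis, so the points of $\widetilde{K}$ are affinely independent by \cref{fac:AffinIndeLineInd}. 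Part~\ref{prop:CCO:LimitCont:LinearIndep} applied to $\widetilde{K}$ and its perturbations $\widetilde{K}^{(k)}$ yields the existence of $N$ with $\CCO(\widetilde{K}^{(k)}) \in \mathcal{H}$ for $k \geq N$ and $\CCO(\widetilde{K}^{(k)}) \to \CCO(\widetilde{K}) = \CCO(K)$, as required.

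The main obstacle I anticipate is precisely the step of showing that affine independence persists under perturbation, since this is what allows the closed-form formula to be applied to $K^{(k)}$ (respectively $\widetilde{K}^{(k)}$) at all. This is handled by the open-mapping argument above: continuity of the determinant on matrices whose entries depend continuously on the tuple of points, together with the hypothesis $\det G \neq 0$ at the limit. Once this openness step is in place, the rest of the argument is a straightforward continuity calculation using \cref{fact:unique:LinIndpPformula} and an equality reduction via \cref{fact:unique:BasisPformula}.
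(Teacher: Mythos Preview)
The paper does not prove this statement: it is stated as a Fact, with the proof deferred to \cite[Theorem~7.1]{BOyW2018}, so there is no in-paper argument to compare against. Your proposal is nonetheless correct and follows what is essentially the only natural route given the tools quoted in \cref{Sec:Subsec:CircOpe}: prove \cref{prop:CCO:LimitCont:LinearIndep} by continuity of the closed-form expression in \cref{fact:unique:LinIndpPformula} (Gram-matrix entries, determinant, and matrix inversion all continuous, invertibility open), and then reduce \cref{prop:CCO:LimitCont:Linear} to it via \cref{fact:unique:BasisPformula} after observing that $\aff(\widetilde{K})=\aff(K)$ by \cref{fact:AffineHull}.

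One small point worth making explicit in your write-up of \cref{prop:CCO:LimitCont:LinearIndep}: the expression in \cref{fact:unique:LinIndpPformula} is not a purely scalar formula but a linear combination of the Hilbert-space vectors $x_{2}-x_{1},\ldots,x_{m}-x_{1}$ with scalar coefficients coming from $G^{-1}$ applied to the norm vector. Continuity therefore requires both that the coefficient vector converges (your Gram-matrix argument) and that the vectors themselves converge in $\mathcal{H}$ (immediate from the hypothesis). You have this implicitly, but spelling it out avoids any ambiguity about what ``continuity of the formula'' means when the outputs live in $\mathcal{H}$ rather than $\mathbb{R}$.
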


\begin{fact} {\rm \cite[Example~7.6]{BOyW2018}} \label{fact:CounterExampleContinuity}
Suppose that 
$\mathcal{H}=\mathbb{R}^{2}$. Let
$x_{1}=(-2,0)$ and $x_{2}=x_{3}=(2,0)$. Let $(\forall k \geq 1)$
$\big(x^{(k)}_{1}, x^{(k)}_{2},x^{(k)}_{3}\big)=\big( (-2,0),
(2,0),(2-\frac{1}{k},\frac{1}{4k})\big)$. Then
\begin{align*}
(\forall k \geq 1) \quad \CCO(\{x^{(k)}_{1}, x^{(k)}_{2},x^{(k)}_{3}\}) =
 \big(0, -8+\tfrac{2}{k}+\tfrac{1}{8k}\big).
 \end{align*}
\end{fact}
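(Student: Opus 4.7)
The plan is to verify the claimed formula by direct computation, using the characterization of the circumcenter provided by \cref{fact:clform:three}.

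First, I would check affine independence of $x_1^{(k)}, x_2^{(k)}, x_3^{(k)}$. The differences are $x_2^{(k)} - x_1^{(k)} = (4,0)$ and $x_3^{(k)} - x_1^{(k)} = (4 - 1/k,\, 1/(4k))$; since the second coordinate of the latter is nonzero, these two vectors are linearly independent. By \cref{fac:AffinIndeLineInd}, the three points are affinely independent, and so by \cref{fact:clform:three} we have $\CCO(\{x_1^{(k)}, x_2^{(k)}, x_3^{(k)}\}) \in \mathcal{H}$. Moreover, this circumcenter is characterized (via \cref{fact:unique:LinIndpPformula}) as the unique point $p \in \aff\{x_1^{(k)}, x_2^{(k)}, x_3^{(k)}\}$ satisfying $\|p - x_1^{(k)}\| = \|p - x_2^{(k)}\| = \|p - x_3^{(k)}\|$. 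Since the three points are affinely independent in $\mathbb{R}^2$, their affine hull is all of $\mathbb{R}^2$, so the affine-hull condition is automatic.

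It then remains to solve the equidistance conditions. Writing $p = (a, b)$, the identity $\|p - x_1^{(k)}\|^2 = \|p - x_2^{(k)}\|^2$ becomes $(a+2)^2 = (a-2)^2$, giving $a = 0$. Substituting this into $\|p - x_1^{(k)}\|^2 = \|p - x_3^{(k)}\|^2$ produces a single linear equation in $b$, namely
\begin{equation*}
4 + b^2 \;=\; \bigl(2 - \tfrac{1}{k}\bigr)^2 + \bigl(b - \tfrac{1}{4k}\bigr)^2,
\end{equation*}
which simplifies (after cancelling $b^2$ and collecting terms) to $b/2 = -4 + 1/k + 1/(16k)$, hence $b = -8 + 2/k + 1/(8k)$, as claimed.

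There is really no conceptual obstacle here; the only mild bookkeeping issue is to make sure the constants are tracked correctly, in particular the $1/(16k^2)$ term in the expansion of $(b - 1/(4k))^2$ combines with $1/k^2$ to produce the final $1/(8k)$ contribution. An alternative route would be to plug the three vectors into the explicit Gram-matrix formula of \cref{fact:unique:LinIndpPformula}, but this is strictly more laborious than the two-equation approach above.
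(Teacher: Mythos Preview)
Your computation is correct. The equidistance system yields $a=0$ and then $b=-8+2/k+1/(8k)$ exactly as you derive. One small wording slip: in your closing remark you say the $1/(16k^{2})$ term ``combines with $1/k^{2}$ to produce the final $1/(8k)$ contribution,'' but in fact after clearing the factor $1/(2k)$ the $1/k^{2}$ term yields the $2/k$ and the $1/(16k^{2})$ term yields the $1/(8k)$ separately; they do not merge. This does not affect the displayed calculation, which is right.

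As for comparison with the paper: there is nothing to compare. This statement is recorded in the paper as a \emph{Fact} cited from \cite[Example~7.6]{BOyW2018}, with no proof given here. Your direct verification via the two perpendicular-bisector equations is the natural and efficient way to supply one; invoking the Gram-matrix formula of \cref{fact:unique:LinIndpPformula} would indeed be unnecessary overhead.
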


%---------------------------------------------Circumcenter mapping---------------------------------------------
\section{Circumcenter mappings induced by operators} \label{sec:CircumMapping}
Suppose that $T_{1}, \ldots, T_{m-1}, T_{m}$ are operators from $\mathcal{H}$ to $\mathcal{H}$, with $m \in \mathbb{N} \smallsetminus \{0\}$ and that 
\begin{empheq}[box=\mybluebox]{equation*}
\mathcal{S}=\{ T_{1}, \ldots, T_{m-1}, T_{m} \} \quad \text{and} \quad (\forall x \in \mathcal{H}) \quad \mathcal{S}(x)=\{ T_{1}x, \ldots, T_{m-1}x, T_{m}x\}.
\end{empheq}

\subsection{Definition}
\begin{definition}[induced circumcenter mapping] \label{def:cir:map}
The \emph{circumcenter mapping $\CC{\mathcal{S}}$ induced by $\mathcal{S}$} is 
\begin{empheq}[box=\mybluebox]{equation*}
\CC{\mathcal{S}} \colon \mathcal{H} \to \mathcal{H} \cup \{ \varnothing \} \colon x \mapsto \CCO(\mathcal{S}(x)),
\end{empheq}
that is, $\CC{\mathcal{S}} = \CCO{} \circ \mathcal{S}$. The \emph{domain} of $\CC{\mathcal{S}}$ is
\begin{align*}
\dom \CC{\mathcal{S}} = \{ x \in \mathcal{H} ~|~  \CC{\mathcal{S}}x \neq \varnothing \}.
\end{align*}
In particular, if $\dom \CC{\mathcal{S}} = \mathcal{H}$, then we say the
circumcenter mapping $\CC{\mathcal{S}}$ induced by $\mathcal{S}$, is
\emph{proper};
otherwise, we call $\CC{\mathcal{S}}$ \emph{improper}.
\end{definition}

\begin{remark} \label{rem:cir:map}
By Definitions \ref{def:cir:map} and \ref{defn:Circumcenter}, for every $x
\in \mathcal{H}$, if the circumcenter of the set $\mathcal{S}(x)$ defined in
\cref{defn:Circumcenter} does not exist in $\mathcal{H}$, then $\CC{\mathcal{S}}x= \varnothing
$. Otherwise, $\CC{\mathcal{S}}x$ is the unique point satisfying the two
conditions below:
\begin{enumerate}
\item $\CC{\mathcal{S}}x \in \aff(\mathcal{S}(x))=\aff\{T_{1}x, \ldots, T_{m-1}x, T_{m}x\}$, and 
\item $\norm{\CC{\mathcal{S}}x -T_{1}x}=\cdots =\norm{\CC{\mathcal{S}}x -T_{m-1}x}=\norm{\CC{\mathcal{S}}x -T_{m}x}$.
\end{enumerate}
\end{remark}

\subsection{Basic properties}
We start with some examples.

\begin{proposition} \label{prop:form:m2:Oper}
Assume $\mathcal{S}=\{T_{1}, T_{2}\}$. Then $\CC{\mathcal{S}}$ is proper. Moreover,
\begin{align*}
(\forall x \in \mathcal{H}) \quad  \CC{\mathcal{S}}x = \frac{T_{1}x +T_{2}x}{2}.
\end{align*}
\end{proposition}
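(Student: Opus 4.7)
The plan is to reduce the claim directly to \cref{fact:CircForTwoPoints}, which already evaluates the circumcenter of a two-point set. Concretely, I fix an arbitrary $x \in \mathcal{H}$, unfold the definition of $\mathcal{S}(x)$ to get $\mathcal{S}(x) = \{T_{1}x,T_{2}x\}$, and then apply \cref{def:cir:map} to rewrite
\begin{align*}
\CC{\mathcal{S}}x = \CCO\big(\mathcal{S}(x)\big) = \CCO\big(\{T_{1}x,T_{2}x\}\big).
\end{align*}
\cref{fact:CircForTwoPoints}, applied with the two points $T_{1}x$ and $T_{2}x$, immediately yields $\CCO(\{T_{1}x,T_{2}x\}) = (T_{1}x + T_{2}x)/2 \in \mathcal{H}$, which is the claimed formula.

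For properness, it suffices to observe that the midpoint $(T_{1}x+T_{2}x)/2$ is an element of $\mathcal{H}$ for every $x \in \mathcal{H}$ (regardless of whether $T_{1}x = T_{2}x$ or not); hence $\CC{\mathcal{S}}x \neq \varnothing$ for every $x \in \mathcal{H}$, so $\dom \CC{\mathcal{S}} = \mathcal{H}$ by \cref{def:cir:map}, which is the definition of properness.

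There is essentially no obstacle here: the statement is a one-step corollary of \cref{fact:CircForTwoPoints} once the definition of $\CC{\mathcal{S}}$ is unwound. The only minor care needed is to note that \cref{fact:CircForTwoPoints} applies even in the degenerate case $T_{1}x = T_{2}x$ (where $\{T_{1}x,T_{2}x\}$ collapses to a singleton and the midpoint formula still returns that single point), so no separate case analysis is required.
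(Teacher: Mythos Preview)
Your proof is correct and follows essentially the same approach as the paper, which simply says the result is clear from \cref{fact:CircForTwoPoints} and \cref{def:cir:map}. You have merely made explicit the unfolding of the definition and the observation about properness that the paper leaves implicit.
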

\begin{proof}
Clear from \cref{fact:CircForTwoPoints} and \cref{def:cir:map}.
\end{proof}

\begin{corollary} \label{cor:T1T2T3Dom}
Let $\mathcal{S}=\{T_{1}, T_{2}, T_{3} \}$ and let $x \in \mathcal{H}$. Then $x \not \in \dom \CC{\mathcal{S}}$ if and only if 
 $\card \{T_{1}x, T_{2}x, T_{3}x \}=3$ and there exists $(\alpha, \beta) \in \mathbb{R}^{2} \smallsetminus \{(0,0)\}$ such that $\alpha (T_{2}x -T_{1}x) +\beta (T_{3}x -T_{1}x)=0$.
\end{corollary}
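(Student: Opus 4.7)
The plan is to deduce this directly from \cref{cor:x1x2x3Dom} by instantiating $x_i := T_i x$ for $i \in \{1,2,3\}$. By \cref{def:cir:map}, $x \notin \dom \CC{\mathcal{S}}$ means precisely that $\CCO(\mathcal{S}(x)) = \CCO(\{T_1x, T_2x, T_3x\}) \notin \mathcal{H}$.

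First I would observe that \cref{cor:x1x2x3Dom} provides a dichotomy characterizing exactly when $\CCO(\{x_1,x_2,x_3\}) \in \mathcal{H}$: either (i) $\card\{x_1,x_2,x_3\} \leq 2$, or (ii) $\card\{x_1,x_2,x_3\} = 3$ together with linear independence of $x_2 - x_1$ and $x_3 - x_1$. Negating this disjunction yields the characterization of when $\CCO \notin \mathcal{H}$: the cardinality must equal $3$ (otherwise case (i) would hold), and the two difference vectors must be linearly dependent (otherwise, combined with cardinality $3$, case (ii) would hold). Linear dependence of $T_2x - T_1x$ and $T_3x - T_1x$ is exactly the existence of $(\alpha,\beta) \in \mathbb{R}^2 \smallsetminus \{(0,0)\}$ with $\alpha(T_2x - T_1x) + \beta(T_3x - T_1x) = 0$.

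I would then substitute $x_i = T_i x$ throughout and conclude the equivalence. There is no real obstacle here; the result is a direct translation of \cref{cor:x1x2x3Dom} through \cref{def:cir:map}, and the proof is essentially one sentence invoking that corollary.
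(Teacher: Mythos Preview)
Your proposal is correct and follows exactly the paper's approach: the paper's proof is the single line ``This follows from \cref{cor:x1x2x3Dom},'' and you have simply unpacked that invocation by substituting $x_i = T_i x$ and negating the disjunction.
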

\begin{proof}
This follows from \cref{cor:x1x2x3Dom}.
\end{proof}

\begin{example}
Assume that $\mathcal{H}=\mathbb{R}^{2}$. 
Set $U_{1}=\mathbb{R}\cdot (1,0)$, $U_{2} = \mathbb{R} \cdot (0,1)$, and
let $\alpha \in \mathbb{R}$. Set $\mathcal{S} = \{\alpha \Id, R_{U_{1}}, R_{U_{2}}\}$. 
Then the following hold:
\begin{enumerate}
\item If $\alpha=0$, then $\dom \CC{\mathcal{S}} = \{(0,0)\}$.
\item If $\alpha=1$ or $\alpha=-1$, then $\dom \CC{\mathcal{S}} =\mathbb{R}^{2}$, i.e., $\CC{\mathcal{S}}$ is proper.
\item If $\alpha \in \mathbb{R} \smallsetminus \{0, 1, -1\}$, then $\dom \CC{\mathcal{S}} = \big( \mathbb{R}^{2} \smallsetminus (U_{1} \cup U_{2}) \big) \cup \{(0,0)\}$.
\end{enumerate}
\end{example}

\begin{proposition}\label{thm:CW:ExistWellDefined}
Suppose that for every $x \in \mathcal{H}$, there exists a point $p(x) \in
\mathcal{H}$ such that
\begin{enumerate}
\item  $p(x) \in \aff \{T_{1}x, \ldots, T_{m-1}x, T_{m}x\}$, and
\item  $\norm{p(x)-T_{1}x} =\cdots =\norm{p(x)-T_{m-1}x}=\norm{p(x)-T_{m}x}$.
\end{enumerate}
Then $\CC{\mathcal{S}}$ is proper and 
\begin{align*}
(\forall x \in \mathcal{H}) \quad \CC{\mathcal{S}}x=p(x).
\end{align*}
\end{proposition}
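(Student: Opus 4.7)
The plan is to recognize that this proposition is essentially a direct unfolding of \cref{def:cir:map} and \cref{defn:Circumcenter}, together with the uniqueness of circumcenters noted immediately before \cref{defn:Circumcenter} (i.e., \cite[Proposition~3.3]{BOyW2018}). There is no substantive obstacle to overcome — the hypothesis is deliberately written to match the two characterizing properties of $\CCO(\mathcal{S}(x))$ recorded in \cref{rem:cir:map}, so the task is simply to make that identification precise for each $x$ and then quote the definition of properness.

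Concretely, I would fix an arbitrary $x \in \mathcal{H}$ and observe that $\mathcal{S}(x) = \{T_{1}x, \ldots, T_{m}x\}$ is a nonempty finite subset of $\mathcal{H}$, hence lies in $\mathcal{P}(\mathcal{H})$ so that $\CCO(\mathcal{S}(x))$ is well-defined. Conditions (i) and (ii) of the hypothesis say exactly that $p(x) \in \aff(\mathcal{S}(x))$ and that $\{\|p(x) - y\| \mid y \in \mathcal{S}(x)\}$ is a singleton. Since at most one such point exists in $\aff(\mathcal{S}(x))$, \cref{defn:Circumcenter} forces $\CCO(\mathcal{S}(x)) = p(x) \in \mathcal{H}$.

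Finally, because $\CC{\mathcal{S}} = \CCO \circ \mathcal{S}$ by \cref{def:cir:map}, this yields $\CC{\mathcal{S}}x = p(x)$ and in particular $x \in \dom \CC{\mathcal{S}}$. As $x$ was arbitrary, $\dom \CC{\mathcal{S}} = \mathcal{H}$, which is precisely the definition of $\CC{\mathcal{S}}$ being proper. The only point where one must be mildly careful is the invocation of uniqueness — it must be emphasized that it is the uniqueness statement from \cite[Proposition~3.3]{BOyW2018} (recalled in the discussion preceding \cref{defn:Circumcenter}) that turns the \emph{existence} of $p(x)$ in the hypothesis into the equality $\CC{\mathcal{S}}x = p(x)$, rather than merely $\CC{\mathcal{S}}x \neq \varnothing$.
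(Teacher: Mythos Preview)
Your proposal is correct and matches the paper's approach exactly: the paper's proof consists of the single sentence ``This follows from \cref{rem:cir:map},'' and your write-up is simply a careful unfolding of that remark together with the uniqueness statement preceding \cref{defn:Circumcenter}. There is nothing to add or change.
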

\begin{proof}
This follows from \cref{rem:cir:map}.
\end{proof}

\begin{proposition}\label{prop:CW:AffineIndepWellDefined}
Suppose that for every $x \in \mathcal{H}$, there exists $\I(x) \subseteq \I:=\{1,
\ldots, m\}$ such that $\card \big(\I(x) \big) = \card \big( \mathcal{S}(x)
\big)$ and $(T_{i}x)_{i \in \I(x) }$ is affinely independent. Then
$\CC{\mathcal{S}}$ is proper.
\end{proposition}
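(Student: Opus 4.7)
The plan is to fix an arbitrary $x \in \mathcal{H}$ and verify directly that $\CC{\mathcal{S}}x \in \mathcal{H}$, so that $x \in \dom \CC{\mathcal{S}}$ and hence (by arbitrariness) $\dom \CC{\mathcal{S}} = \mathcal{H}$. The hypothesis gives us an index subset $\I(x) \subseteq \{1,\ldots,m\}$ such that $(T_i x)_{i \in \I(x)}$ is affinely independent and $\card(\I(x)) = \card(\mathcal{S}(x))$. So essentially we have selected one representative from each distinct value of $\mathcal{S}(x)$, and these representatives happen to be affinely independent.

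The first step is to set $M := \{T_i x \mid i \in \I(x)\}$ and observe that $M \subseteq \mathcal{S}(x)$. Since affinely independent vectors are in particular pairwise distinct, the map $i \mapsto T_i x$ on $\I(x)$ is injective, whence $\card(M) = \card(\I(x)) = \card(\mathcal{S}(x))$. A subset of a finite set having the same cardinality must equal that set, so $M = \mathcal{S}(x)$.

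Now I would apply \cref{fact:unique:LinIndpPformula} to the affinely independent enumeration $(T_i x)_{i \in \I(x)}$ of $M$: this fact guarantees $\CCO(M) \in \mathcal{H}$. Combining with $M = \mathcal{S}(x)$, we conclude
\begin{equation*}
\CC{\mathcal{S}}x \;=\; \CCO(\mathcal{S}(x)) \;=\; \CCO(M) \;\in\; \mathcal{H},
\end{equation*}
so $x \in \dom \CC{\mathcal{S}}$. Since $x$ was arbitrary, $\dom \CC{\mathcal{S}} = \mathcal{H}$, i.e., $\CC{\mathcal{S}}$ is proper.

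There is no real obstacle here; the proof is essentially a bookkeeping exercise translating between the index set $\I(x)$, the underlying set $M$, and the full set $\mathcal{S}(x)$. The only point requiring care is the cardinality argument that upgrades the subset $M \subseteq \mathcal{S}(x)$ to an equality, which is what allows the affine-independence hypothesis on the \emph{selected} points to transfer to existence of the circumcenter of the \emph{whole} set $\mathcal{S}(x)$; once this equality is noted, \cref{fact:unique:LinIndpPformula} does all the work.
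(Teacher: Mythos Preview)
Your proof is correct and follows essentially the same route as the paper's: both arguments use affine independence to show that the selected set $M=\{T_i x\}_{i\in\I(x)}$ has cardinality $\card(\I(x))=\card(\mathcal{S}(x))$, upgrade the inclusion $M\subseteq\mathcal{S}(x)$ to an equality by the finite-cardinality argument, and then invoke \cref{fact:unique:LinIndpPformula} to conclude $\CC{\mathcal{S}}x=\CCO(M)\in\mathcal{H}$.
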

\begin{proof}
Let $x \in \mathcal{H}$.   Since $\I(x) \subseteq \I$, 
we have $\{ T_{i}x \}_{i \in \I(x) } \subseteq \mathcal{S}(x)$. 
The affine independence of $(T_{i}x)_{i \in \I(x) }$ yields $\card \big( \{ T_{i}x
\}_{i \in \I(x) } \big) = \card \big(\I(x) \big)$. Combining with $\card
\big(\I(x) \big) =\card \big( \mathcal{S}(x) \big)$, we obtain that $\{
T_{i}x \}_{i \in \I(x) } =\mathcal{S}(x)$, which implies that
\begin{align} \label{eq:prop:CW:AffineIndepWellDefined}
\CC{\mathcal{S}}x = \CCO{ \big( \mathcal{S}(x) \big)} = \CCO{} \big( \{
T_{i}x \}_{i \in \I(x) } \big).
\end{align}
Using the assumption that $(T_{i}x)_{i \in \I(x) }$ is affinely independent
again, by \cref{fact:unique:LinIndpPformula},
we deduce that $\CCO{} \big( \{ T_{i}x \}_{i \in \I(x) } \big) \in \mathcal{H} $. 
Combining with \cref{eq:prop:CW:AffineIndepWellDefined}, we deduce that
$(\forall x \in \mathcal{H})$ $\CC{\mathcal{S}}x \in \mathcal{H}$, i.e.,
$\CC{\mathcal{S}}$ is proper.
\end{proof}

The following example illustrates that the converse of
\cref{prop:CW:AffineIndepWellDefined} is not true in general. 

\begin{example} \label{exam:ProjecCCSProper}
Let $U$ be a closed linear subspace of $\mathcal{H}$ with $\{0\} \neq U
\varsubsetneqq \mathcal{H}$. Denote by $0$ also the zero operator: 
$(\forall x \in \mathcal{H})$ $0(x)=0$. Set $\mathcal{S}=\{\Id, \Pro_{U},
\Pro_{U^{\perp}}, 0\}$. Then the following hold:
\begin{enumerate}
\item \label{exam:ProjecCCSProper:0} $(\forall x \in \mathcal{H})$
$\CC{\mathcal{S}}x= \frac{x}{2}$; consequently, $\CC{\mathcal{S}}$ is proper.
\item \label{exam:ProjecCCSProper:ii} $\big(\forall x \in \mathcal{H} \smallsetminus ( U\cup U^{\perp}) \big)$ $x, \Pro_{U}x, \Pro_{U^{\perp}}x, 0 (x)$ are pairwise distinct.
\item \label{exam:ProjecCCSProper:iii} $(\forall x \in \mathcal{H})$ $\Id x, \Pro_{U}x,  \Pro_{U^{\perp}}x, 0( x)$ are affinely dependent.
\end{enumerate}
\end{example}
\begin{proof}
\cref{exam:ProjecCCSProper:0}: Let $x \in \mathcal{H}$. By \cref{prop:PR}\cref{prop:PR:Pythagoras} and by $0 \in U$ and $\Pro_{U}x \in U$,  we deduce that $\norm{\frac{x}{2}-\Pro_{U}\frac{x}{2} }^{2} +\norm{\Pro_{U}\frac{x}{2}}^{2} =\norm{ \frac{x}{2}}^{2}$ and that $\norm{\frac{x}{2}-\Pro_{U}\frac{x}{2} }^{2} +\norm{\Pro_{U}x-\Pro_{U}\frac{x}{2}}^{2} =\norm{ \frac{x}{2}-\Pro_{U}x}^{2}$. Combining with the linearity of $\Pro_{U}$, we obtain
\begin{align} \label{eq:exam:ProjecCCSProper:x}
\Norm{ \frac{x}{2}}=\Norm{ \frac{x}{2}-\Pro_{U}x}. 
\end{align}
Similarly, by  \cref{prop:PR}\cref{prop:PR:Pythagoras} again, replace $U$ in the above analysis by $U^{\perp}$ to yield that 
\begin{align} \label{eq:exam:ProjecCCSProper:x2}
\Norm{ \frac{x}{2}}=\Norm{ \frac{x}{2}-\Pro_{U^{\perp}}x}. 
\end{align}
Combining \cref{eq:exam:ProjecCCSProper:x} with \cref{eq:exam:ProjecCCSProper:x2}, we obtain that
\begin{align} \label{eq:exam:ProjecCCSProper:all}
\Norm{\frac{x}{2}}=\Norm{\frac{x}{2}-0(x)}=\Norm{\frac{x}{2}-x }=\Norm{ \frac{x}{2}-\Pro_{U}x} =\Norm{ \frac{x}{2}-\Pro_{U^{\perp}}x}.
\end{align}
Since $\frac{x}{2} = \frac{x}{2} + \frac{0}{2} \in \aff\{x, \Pro_{U}x, \Pro_{U^{\perp}}x, 0 (x)\}$, \cref{eq:exam:ProjecCCSProper:all} yields that $(\forall x \in \mathcal{H})$ $\CC{\mathcal{S}}x=\frac{x}{2}$.

\cref{exam:ProjecCCSProper:ii}: In fact, by \cref{MetrProSubs8}\cref{MetrProSubs8:iv},
\begin{subequations} \label{eq:exam:ProjecCCSProper:subeq}
\begin{align}
&x = \Pro_{U}x \Longleftrightarrow x \in U;\\
&x = \Pro_{U^{\perp}}x \Longleftrightarrow x \in U^{\perp};\\
&U \cap U^{\perp} =\{0\}.
\end{align}
\end{subequations}
In addition, Combining \cref{eq:exam:ProjecCCSProper:subeq}
 with \cref{MetrProSubs8}\cref{MetrProSubs8:ii}, we know that
\begin{align*}
\Pro_{U}x= \Pro_{U^{\perp}}x \Longrightarrow \Pro_{U}x= \Pro_{U^{\perp}}x=0 \Longrightarrow x=\Pro_{U}x+ \Pro_{U^{\perp}}x=0 \in U\cup U^{\perp}.
\end{align*}
Hence, for every $x \in \mathcal{H} \smallsetminus ( U\cup U^{\perp})$, $x, \Pro_{U}x, \Pro_{U^{\perp}}x, 0 (x)$ are pairwise distinct.

\cref{exam:ProjecCCSProper:iii}: Now for every $x \in \mathcal{H}$, 
\begin{align*}
& \quad ~~~  x=\Pro_{U}x+ \Pro_{U^{\perp}}x\\
& \Rightarrow x, \Pro_{U}x, \Pro_{U^{\perp}}x ~\text{are linear dependent}\\
& \Leftrightarrow  x -0, \Pro_{U}x-0, \Pro_{U^{\perp}}x-0 ~\text{are linear dependent}\\
& \Leftrightarrow   0( x), \Id x, \Pro_{U}x,  \Pro_{U^{\perp}}x  ~\text{are affinely dependent}. \quad (\text{by \cref{fac:AffinIndeLineInd}})
\end{align*}
The proof is complete.
\end{proof}

The following theorem provides a way to verify the properness of $\CC{\mathcal{S}}$ where $\mathcal{S}$ contains three operators.

\begin{theorem} \label{thm:Proper3}
Suppose that $\mathcal{S}=\{T_{1}, T_{2}, T_{3} \}$. Then $\CC{\mathcal{S}}$
is proper if and only if for every $x \in \mathcal{H}$ with $\card \big(
\mathcal{S}(x) \big) =3$, the vectors $T_{1}x, T_{2}x, T_{3}x $ are affinely independent.
\end{theorem}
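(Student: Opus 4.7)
The plan is to apply \cref{cor:x1x2x3Dom} pointwise to the set $\mathcal{S}(x) = \{T_{1}x, T_{2}x, T_{3}x\}$ for each $x \in \mathcal{H}$. By \cref{def:cir:map}, $\CC{\mathcal{S}}$ is proper if and only if $\CCO(\mathcal{S}(x)) \in \mathcal{H}$ for every $x \in \mathcal{H}$, so the statement reduces to a universal-quantifier rephrasing of the three-point characterization already established.

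First I would fix an arbitrary $x \in \mathcal{H}$ and observe that $\mathcal{S}(x)$ has cardinality at most three, so exactly one of the cases (i) $\card(\mathcal{S}(x)) \leq 2$, or (ii) $\card(\mathcal{S}(x)) = 3$, occurs. In case (i), \cref{cor:x1x2x3Dom} guarantees $\CCO(\mathcal{S}(x)) \in \mathcal{H}$ unconditionally; alternatively, when two of the three values coincide the set reduces to one or two distinct points and \cref{fact:CircForTwoPoints} (together with the trivial singleton case) supplies the circumcenter. Hence the existence of $\CC{\mathcal{S}}x$ is never in question on the subset of $\mathcal{H}$ where $\card(\mathcal{S}(x)) \leq 2$.

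For the forward implication, I would argue by contrapositive: if there exists $x \in \mathcal{H}$ with $\card(\mathcal{S}(x)) = 3$ for which $T_{1}x, T_{2}x, T_{3}x$ are affinely dependent, then \cref{cor:x1x2x3Dom} forces $\CCO(\mathcal{S}(x)) = \varnothing$, so $x \notin \dom \CC{\mathcal{S}}$ and $\CC{\mathcal{S}}$ is improper. For the reverse implication, assume affine independence holds whenever $\card(\mathcal{S}(x)) = 3$. Then case (i) and case (ii) both yield $\CCO(\mathcal{S}(x)) \in \mathcal{H}$ by \cref{cor:x1x2x3Dom}, so $\dom \CC{\mathcal{S}} = \mathcal{H}$.

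There is essentially no hard step here: the entire content is packaged inside \cref{cor:x1x2x3Dom}, and the only thing to be careful about is the clean case split according to $\card(\mathcal{S}(x))$, together with the observation that affine independence is only a meaningful constraint in the three-distinct-points case (since affine independence would be automatic, or vacuously a non-issue, in the degenerate cases already handled by \cref{fact:CircForTwoPoints}).
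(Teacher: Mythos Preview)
Your proposal is correct and follows essentially the same approach as the paper: a case split on $\card(\mathcal{S}(x))$, handling the degenerate case $\leq 2$ trivially and the nondegenerate case $=3$ via the three-point circumcenter characterization. The only cosmetic difference is that the paper cites \cref{fact:clform:three} and \cref{prop:form:m2:Oper} separately, whereas you invoke their packaged form \cref{cor:x1x2x3Dom}; the logical content is identical.
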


\begin{proof}
By \cref{fact:clform:three}, for every $x \in \mathcal{H}$ with $\card \big( \mathcal{S}(x) \big) =3$,
\begin{align} \label{eq:thm:Proper3:AffinelyIndep}
\CC{\mathcal{S}}x \in \mathcal{H} \Longleftrightarrow T_{1}x, T_{2}x, T_{3}x
~\mbox{are affinely independent.}
\end{align}

\enquote{$\Longrightarrow$}: It follows directly from \cref{eq:thm:Proper3:AffinelyIndep}. 

\enquote{$\Longleftarrow$}: Assume that for every $x \in \mathcal{H}$ with $\card \big( \mathcal{S}(x)\big)=3$, $T_{1}x, T_{2}x, T_{3}x $ are affinely independent in $\mathcal{H}$. Let $x \in \mathcal{H}$. If $\card \big( \mathcal{S}(x)\big)=3$, by \cref{eq:thm:Proper3:AffinelyIndep} and the assumption,  then $\CC{\mathcal{S}}x \in \mathcal{H}$.  Assume $\card \big( \mathcal{S}(x)\big) \leq 2$, by \cref{prop:form:m2:Oper}, $\CC{\mathcal{S}}x \in \mathcal{H}$. Altogether, $(\forall x \in \mathcal{H})$, $\CC{\mathcal{S}}x \in \mathcal{H} $, which means that $\CC{\mathcal{S}}$ is proper.
\end{proof}

\begin{proposition} \label{prop:CW:FixPointSet}   
Suppose that $\mathcal{S}=\{ T_{1}, \ldots, T_{m-1}, T_{m} \}$. 
Then the following hold:
\begin{enumerate}
\item \label{prop:CW:FixPointSet:inclu} $\cap^{m}_{i=1} \Fix T_{i} \subseteq \Fix \CC{\mathcal{S}}$.
\item \label{prop:CW:FixPointSet:equa} 
If $\Fix \CC{\mathcal{S}} \subseteq \cup^{m}_{i=1} \Fix T_{i}$, 
then $\Fix \CC{\mathcal{S}} =\cap^{m}_{i=1} \Fix T_{i} $. 
\item \label{prop:Id:FixPointSet:equa} If $T_{1} =\Id$, 
then $\cap^{m}_{i=1} \Fix T_{i} =\Fix \CC{\mathcal{S}}$
\end{enumerate}
\end{proposition}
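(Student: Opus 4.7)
The plan is to handle the three items in order, with (i) as the straightforward inclusion, (ii) as the substantive step using the equidistance property of the circumcenter, and (iii) as an immediate corollary of (ii).

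For \cref{prop:CW:FixPointSet:inclu}, I would take $x \in \bigcap^{m}_{i=1}\Fix T_{i}$, so that $T_{i}x = x$ for every $i \in \{1,\ldots,m\}$. Then $\mathcal{S}(x) = \{T_{1}x,\ldots,T_{m}x\} = \{x\}$ is a singleton. Since $x \in \aff\{x\} = \{x\}$ and $\{\norm{x - x}\} = \{0\}$ is a singleton, \cref{defn:Circumcenter} gives $\CCO(\mathcal{S}(x)) = x$, hence $\CC{\mathcal{S}}x = x$ and $x \in \Fix \CC{\mathcal{S}}$.

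For \cref{prop:CW:FixPointSet:equa}, in view of \cref{prop:CW:FixPointSet:inclu} I only need to prove $\Fix \CC{\mathcal{S}} \subseteq \bigcap^{m}_{i=1}\Fix T_{i}$. Take $x \in \Fix \CC{\mathcal{S}}$; then $x \in \dom \CC{\mathcal{S}}$ and $\CC{\mathcal{S}}x = x$. By hypothesis $x \in \bigcup^{m}_{i=1}\Fix T_{i}$, so there is some index $i_{0} \in \{1,\ldots,m\}$ with $T_{i_{0}}x = x$. By \cref{rem:cir:map}, because $x$ is the circumcenter of $\mathcal{S}(x)$, the scalars $\norm{x - T_{1}x},\ldots,\norm{x - T_{m}x}$ are all equal. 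Since $\norm{x - T_{i_{0}}x} = 0$, it follows that $\norm{x - T_{i}x} = 0$, i.e., $T_{i}x = x$, for every $i \in \{1,\ldots,m\}$. Therefore $x \in \bigcap^{m}_{i=1}\Fix T_{i}$, giving the reverse inclusion and hence the desired equality.

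For \cref{prop:Id:FixPointSet:equa}, the assumption $T_{1} = \Id$ forces $\Fix T_{1} = \mathcal{H}$, so $\bigcup^{m}_{i=1}\Fix T_{i} = \mathcal{H}$ and the inclusion $\Fix \CC{\mathcal{S}} \subseteq \bigcup^{m}_{i=1}\Fix T_{i}$ holds trivially. Applying \cref{prop:CW:FixPointSet:equa} yields $\Fix \CC{\mathcal{S}} = \bigcap^{m}_{i=1}\Fix T_{i}$.

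There is no real obstacle here; the only point worth flagging is that \cref{prop:CW:FixPointSet:equa} genuinely uses the \emph{equidistance} characterization of the circumcenter (not merely its definition as a point in the affine hull), which is why a single fixed-point witness $T_{i_{0}}x = x$ propagates to all indices.
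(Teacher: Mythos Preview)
Your proposal is correct and follows essentially the same argument as the paper: both establish \cref{prop:CW:FixPointSet:inclu} by observing that $\mathcal{S}(x)=\{x\}$ forces $\CC{\mathcal{S}}x=x$, prove \cref{prop:CW:FixPointSet:equa} by combining the equidistance property of the circumcenter with a single witness $T_{i_{0}}x=x$ to force all norms to vanish, and deduce \cref{prop:Id:FixPointSet:equa} from \cref{prop:CW:FixPointSet:equa} via $\Fix T_{1}=\mathcal{H}$.
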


\begin{proof}
\cref{prop:CW:FixPointSet:inclu}: Let $x \in \cap^{m}_{i=1} \Fix T_{i} $. Then 
\begin{align} \label{eq:prop:CW:FixPointSet:fx}
(\forall i \in \{1, \ldots, m-1,m\}) \quad T_{i}x=x,
\end{align}
which yields that $\aff \{T_{1}x, \ldots, T_{m-1}x,T_{m}x\}=\aff \{x\}=\{x\}$. In addition, by \cref{eq:prop:CW:FixPointSet:fx},
\begin{align*}
\norm{x-T_{1}x}=\cdots=\norm{x-T_{m-1}x}=\norm{x-T_{m}x}=0.
\end{align*}
Therefore, we obtain that $\CC{\mathcal{S}}x=x$, which means that $x \in \Fix \CC{\mathcal{S}}$. Hence, $\cap^{m}_{i=1} \Fix T_{i} \subseteq \Fix \CC{\mathcal{S}}$.

\cref{prop:CW:FixPointSet:equa}: Let $x \in \Fix \CC{\mathcal{S}}$. 
By the assumption, there is $i_{0} \in \{1,\ldots,m\}$ such that 
\begin{align}\label{eq:i0:prop:CW:FixPointSet}  
x = T_{i_{0}}x
\end{align}
Now $x \in \Fix \CC{\mathcal{S}}$, i.e., $x=\CC{\mathcal{S}}x$, implies that 
\begin{align} \label{eq:Tx:prop:CW:FixPointSet}  
\norm{x-T_{1}x}=\cdots=\norm{x-T_{m-1}x}=\norm{x-T_{m}x}.
\end{align}
Combining \cref{eq:Tx:prop:CW:FixPointSet} with \cref{eq:i0:prop:CW:FixPointSet}, we obtain that 
\begin{align*}
\norm{x-T_{1}x}=\cdots=\norm{x-T_{m-1}x}=\norm{x-T_{m}x}=0,
\end{align*}
which means that $x \in \cap^{m}_{i=1} \Fix T_{i}$. Hence, $\Fix \CC{\mathcal{S}} \subseteq \cap^{m}_{i=1} \Fix T_{i}$. Combining with \cref{prop:CW:FixPointSet:inclu}, we deduce that $\Fix \CC{\mathcal{S}} =\cap^{m}_{i=1} \Fix T_{i} $.

\cref{prop:Id:FixPointSet:equa}: 
If $T_1=\Id$, then $\Fix T_1 = \mathcal{H}$ and the result follows from
\cref{prop:CW:FixPointSet:equa}. 
\end{proof}

\begin{example}
Assume that $\mathcal{H} =\mathbb{R}^{2}$. Set 
$T_{1} = \Pro_{\mathbf{B}[(-2,0);1]}$, $T_{2} =
\Pro_{\mathbf{B}[(0,2);1]}$, $T_{3} = \Pro_{\mathbf{B}[(2,0);1]}$, and 
$\mathcal{S} = \{T_{1}, T_{2},T_{3}\}$. Then $\CC{\mathcal{S}}$ is proper.
Moreover, $\varnothing = \cap^{3}_{i=1} \Fix T_{i} \subsetneqq \Fix \CC{\mathcal{S}}
= \{(0,0)\} $.
\end{example}
\begin{proof}
The properness of $\CC{\mathcal{S}}$ follows from \cref{thm:Proper3} while the rest 
is a consequence of elementary manipulations. 
\end{proof}
%---------------------------------------------------------------------------------------------------------------------------------------
%---------------------------------------------Continuity of circumcenter mapping---------------------------------------------
%---------------------------------------------------------------------------------------------------------------------------------------

\subsection{Continuity} \label{subsec:ContinuCircumMapping}
\begin{proposition} \label{prop:OperContIndep}
Assume that the elements of $\mathcal{S}=\{T_{1}, \ldots, T_{m-1}, T_{m}\}$
are continuous operators and that  $x \in \dom \CC{\mathcal{S}}$. 
Then the following hold: 
\begin{enumerate}
\item \label{prop:OperContIndep:i} Let $\widetilde{\mathcal{S}}_{x}=\{T_{1},  T_{i_{1}}, \ldots, T_{i_{d_{x}}} \} \subseteq \mathcal{S}$ be such that\footnotemark ~
$T_{i_{1}}x-T_{1}x, \ldots, T_{i_{d_{x}}} x-T_{1}x$  is a basis of $\spn\{T_{2}x-T_{1}x, \ldots, T_{m}x-T_{1}x \}$ .
Then for every $(x^{(k)})_{k\in \mathbb{N}}\subseteq \mathcal{H}$ satisfying $\lim_{k \rightarrow \infty} x^{(k)}=x$, there exists $N \in \mathbb{N}$ such that for every $k \geq N$, $\CC{\widetilde{\mathcal{S}}_{x}}(x^{(k)}) \in \mathcal{H}$. Moreover 
\begin{align} \label{eq:prop:OperContIndep:first}
\lim_{k \rightarrow \infty}\CC{\widetilde{\mathcal{S}}_{x}}(x^{(k)})=\CC{\widetilde{\mathcal{S}}_{x}}x=\CC{\mathcal{S}}x.
\end{align}
\item \label{prop:OperContIndep:ii}
If $T_{1}x, \ldots, T_{m-1}x, T_{m}x$ are affinely independent, then $\CC{\mathcal{S}}$ is continuous at $x$.
\end{enumerate}
\end{proposition}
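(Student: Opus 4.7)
The plan is to reduce both parts to \cref{fact:CCO:LimitCont}, translating continuity of the operators $T_i$ into convergence of the $m$-tuples of images, and then applying the known continuity/stability results for the circumcenter operator $\CCO{}$.

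For \cref{prop:OperContIndep:i}, I would first let $(x^{(k)})_{k \in \mathbb{N}} \subseteq \mathcal{H}$ with $x^{(k)} \to x$. By continuity of each $T_i$, I get
\begin{equation*}
\lim_{k \to \infty} \bigl(T_{1}x^{(k)}, T_{i_{1}}x^{(k)}, \ldots, T_{i_{d_{x}}}x^{(k)}\bigr) = \bigl(T_{1}x, T_{i_{1}}x, \ldots, T_{i_{d_{x}}}x\bigr)
\end{equation*}
in $\mathcal{H}^{d_{x}+1}$. Setting $K = \mathcal{S}(x)$, the hypothesis that $T_{i_1}x-T_1 x,\ldots,T_{i_{d_x}}x-T_1 x$ is a basis of $\spn\{T_2 x-T_1 x,\ldots,T_m x-T_1 x\}$ is exactly the setup of \cref{prop:CCO:LimitCont:Linear} with $t = d_x$ and $\widetilde{K} = \widetilde{\mathcal{S}}_{x}(x)$. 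Because $x \in \dom \CC{\mathcal{S}}$, we have $\CCO(K) \in \mathcal{H}$, so \cref{fact:CCO:LimitCont}\cref{prop:CCO:LimitCont:Linear} applies to $\widetilde{K}^{(k)} := \widetilde{\mathcal{S}}_{x}(x^{(k)})$ and yields the existence of $N$ such that for $k \geq N$, $\CC{\widetilde{\mathcal{S}}_{x}}(x^{(k)}) = \CCO(\widetilde{K}^{(k)}) \in \mathcal{H}$, together with the chain of equalities in \cref{eq:prop:OperContIndep:first}.

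For \cref{prop:OperContIndep:ii}, if $T_1 x, \ldots, T_m x$ are affinely independent, then by \cref{fac:AffinIndeLineInd} the vectors $T_2 x - T_1 x, \ldots, T_m x - T_1 x$ are themselves linearly independent, so one may take $\widetilde{\mathcal{S}}_x = \mathcal{S}$ in part~\cref{prop:OperContIndep:i} (equivalently, apply \cref{fact:CCO:LimitCont}\cref{prop:CCO:LimitCont:LinearIndep} directly to $K^{(k)} = \mathcal{S}(x^{(k)})$). In either formulation, for every sequence $x^{(k)} \to x$ one concludes $\CC{\mathcal{S}}(x^{(k)}) \to \CC{\mathcal{S}} x$, which is precisely continuity of $\CC{\mathcal{S}}$ at $x$.

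There is no real obstacle beyond bookkeeping: the main subtlety is just observing that the three circumcenters $\CC{\widetilde{\mathcal{S}}_x}(x^{(k)})$, $\CC{\widetilde{\mathcal{S}}_x} x$, and $\CC{\mathcal{S}} x$ coincide in the limit — the first two via \cref{prop:CCO:LimitCont:Linear}, the last two via \cref{fact:unique:BasisPformula} applied to $M = \widetilde{\mathcal{S}}_x(x) \subseteq \mathcal{S}(x) = K$ (noting that $\aff(M) = \aff(K)$ by \cref{fact:AffineHull}, using the basis hypothesis). All three equalities are already packaged inside the conclusion of \cref{prop:CCO:LimitCont:Linear}, so the write-up is essentially a citation of the appropriate earlier facts.
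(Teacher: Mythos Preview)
Your proposal is correct and follows essentially the same route as the paper: both arguments reduce directly to \cref{fact:CCO:LimitCont} by identifying $K=\mathcal{S}(x)$, $\widetilde{K}=\widetilde{\mathcal{S}}_{x}(x)$, and $\widetilde{K}^{(k)}=\widetilde{\mathcal{S}}_{x}(x^{(k)})$, using continuity of the $T_i$ to obtain the required tuple convergence. Your write-up is slightly more explicit (in particular the remark on \cref{fact:unique:BasisPformula}), but the logical content is the same.
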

\footnotetext{When $\mathcal{S}(x)$ is a singleton, then
$\widetilde{\mathcal{S}}_{x}= \{T_{1}\}$ by the standard convention that $
\varnothing $ is the basis of $\{0\}$.}
\begin{proof}
\cref{prop:OperContIndep:i}: Let $(x^{(k)})_{k\in \mathbb{N}}\subseteq \mathcal{H}$ satisfying $\lim_{k \rightarrow \infty} x^{(k)}=x$. Now 
\begin{align*}
&\mathcal{S}= \{ T_{1}, \ldots, T_{m-1}, T_{m}\},\quad \mathcal{S}(x)=\{T_{1}(x), \ldots, T_{m-1}x, T_{m}x\},\\
&\widetilde{\mathcal{S}}_{x} = \{ T_{1}, T_{i_{1}}, \ldots, T_{i_{d_{x}}}\}, \quad 
\widetilde{\mathcal{S}}_{x}(x)=\{T_{1}x,  T_{i_{1}}x, \ldots, T_{i_{d_{x}}}x \},
\quad \widetilde{\mathcal{S}}_{x}(x^{(k)})=\{T_{1}x^{(k)},  T_{i_{1}}x^{(k)}, \ldots, T_{i_{d_{x}}}x^{(k)} \}.
\end{align*}
By \cref{def:cir:map}, $\CC{\mathcal{S}}x \in \mathcal{H}$ means that $\CCO(\mathcal{S}(x))\in \mathcal{H}$.
By assumptions, 
\begin{align*}
T_{i_{1}}x-T_{1}x, \ldots, T_{i_{d_{x}}} x-T_{1}x ~\text{ is a basis of }~\spn \{T_{2}x-T_{1}x, \ldots, T_{m}x-T_{1}x \} . 
\end{align*}
Substituting the $K$, $\widetilde{K}$ and $\widetilde{K}^{(k)}$ in \cref{fact:CCO:LimitCont}\cref{prop:CCO:LimitCont:Linear} by the above $\mathcal{S}(x)$, $\widetilde{\mathcal{S}}_{x}(x)$ and $\widetilde{\mathcal{S}}_{x}(x^{(k)})$ respectively, we obtain the desired results.

\cref{prop:OperContIndep:ii}: This follows easily from \cref{prop:OperContIndep:i}. 
\end{proof}

The next result summarizes conditions under which the proper circumcenter mapping 
$\CC{\mathcal{S}}$ is continuous at a point $x$.

\begin{proposition} \label{prop:CircumMapContinu:Allcase}
Assume that the elements of $\mathcal{S}=\{T_{1}, \ldots, T_{m-1}, T_{m}\}$
are continuous operators and that $\CC{\mathcal{S}}$ is proper.
Let $x \in \mathcal{H}$. The following assertions hold:
\begin{enumerate}
\item \label{thm:CircumMapContinu:Allcase:affineIndep} If $T_{1}x, \ldots, T_{m-1}x, T_{m}x$ are affinely independent, then $\CC{\mathcal{S}}$ is continuous at $x$.
\item \label{thm:CircumMapContinu:Allcase:affineDep} 
If $T_{1}x, \ldots, T_{m-1}x, T_{m}x$ are affinely dependent and
$m\leq 2$, then $\CC{\mathcal{S}}$ is continuous at $x$. 
\end{enumerate}
\end{proposition}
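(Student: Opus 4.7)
The plan is to dispose of both clauses by invoking results already obtained, so essentially no new work is needed. For clause \cref{thm:CircumMapContinu:Allcase:affineIndep}, the statement is precisely the conclusion of \cref{prop:OperContIndep}\cref{prop:OperContIndep:ii}: if $T_{1}x, \ldots, T_{m}x$ are affinely independent, then continuity of $\CC{\mathcal{S}}$ at $x$ follows from the second part of that proposition. Hence this clause requires only a direct citation.

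For clause \cref{thm:CircumMapContinu:Allcase:affineDep}, the hypothesis $m \leq 2$ drastically restricts the situation. If $m = 1$, the hypothesis is vacuous, since a single point $T_{1}x$ is $0$-dimensional and hence affinely independent by definition; but in any case $\CC{\mathcal{S}} = T_{1}$, which is continuous at $x$ by assumption. If $m = 2$, \cref{prop:form:m2:Oper} delivers the globally defined closed-form expression
\begin{align*}
(\forall y \in \mathcal{H}) \quad \CC{\mathcal{S}}y = \frac{T_{1}y + T_{2}y}{2},
\end{align*}
which is manifestly continuous at $x$ by continuity of $T_{1}$ and $T_{2}$, irrespective of whether $T_{1}x$ and $T_{2}x$ happen to coincide.

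I do not anticipate any genuine obstacle: both assertions follow immediately from earlier results in the paper, since the real work on continuity was carried out in \cref{prop:OperContIndep} via \cref{fact:CCO:LimitCont}. The only minor subtlety worth noting is that the affine dependence/independence dichotomy becomes irrelevant in the $m \leq 2$ branch because the arithmetic-mean formula of \cref{prop:form:m2:Oper} is a continuous function of $y$ regardless.
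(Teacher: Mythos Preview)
Your proposal is correct and follows essentially the same approach as the paper: the paper's proof simply cites \cref{prop:OperContIndep}\cref{prop:OperContIndep:ii} for \cref{thm:CircumMapContinu:Allcase:affineIndep} and \cref{prop:form:m2:Oper} for \cref{thm:CircumMapContinu:Allcase:affineDep}. Your additional remark that the $m=1$ case is vacuous (since a single point is always affinely independent) is a nice bit of extra care that the paper omits.
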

\begin{proof}
\cref{thm:CircumMapContinu:Allcase:affineIndep} follows from
\cref{prop:OperContIndep}\cref{prop:OperContIndep:ii} while 
\cref{thm:CircumMapContinu:Allcase:affineDep} is a consequence of \cref{prop:form:m2:Oper}.
\end{proof}

The following examples show that even when $T_{1}x, \ldots, T_{m-1}x, T_{m}x$ are
affinely dependent and $m \geq 3$, then $\CC{S}$ may still be continuous at $x$.

\begin{example}  \label{exam:ProperCirMapContinuous:UUperp}
Suppose that $U$ is a closed linear subspace of $\mathcal{H}$ such that
 $\{0\}\subsetneqq U \varsubsetneqq \mathcal{H}$. 
Set $\mathcal{S}=\{\Id, \R_{U}, \R_{U^{\perp}}\}$. 
Then the following hold: 
\begin{enumerate}
\item \label{exam:ProperCirMapContinuous:UUperp:i} 
The vectors $x,\R_{U}x,\R_{U^\perp}x$ are affinely dependent for 
every $x\in U \cup U^\perp$. 
\item \label{exam:ProperCirMapContinuous:UUperp:ii} $\CC{S}\equiv 0$ which is thus 
proper and continuous on $\mathcal{H}$. 
\end{enumerate}
\end{example} 

\begin{proof}
\cref{exam:ProperCirMapContinuous:UUperp:i}: For every $x \in U$ (respectively $x \in U^{\perp}$), $\R_{U}x=x $ (respectively $\R_{U^{\perp}}x=x$), which implies that $x, \R_{U}x, \R_{U^{\perp}}x$, which is $x, x, \R_{U^{\perp}}x $ (respectively $x, \R_{U}x, x$) are affinely dependent.

\cref{exam:ProperCirMapContinuous:UUperp:ii}: Since
$\Id=\Pro_{U}+\Pro_{U^{\perp}}$ and $\R_{U}=2\Pro_{U}-\Id $, we have 
\begin{align*}
\frac{\R_{U}+\R_{U^{\perp}}}{2} =\frac{(2\Pro_{U}- \Id)+(2\Pro_{U^{\perp}} - \Id)}{2} =\frac{1}{2}\Big(2\Pro_{U} - \Id+2(\Id-\Pro_{U})-\Id \Big)=0.
\end{align*}
Let $x \in \mathcal{H}$. Then $0 =\frac{\R_{U}x+\R_{U^{\perp}}x }{2} \in \aff \{x ,\R_{U}x, \R_{U^{\perp}}x\}$. In addition, clearly $0 \in U \cap U^{\perp}$. In \cref{prop:PR}\cref{prop:PR:ReflectorEquidist}, substitute $C =U$, and let the point $v=0$. We get  $\norm{x}=\norm{\R_{U}x}$. Similarly,  In \cref{prop:PR}\cref{prop:PR:ReflectorEquidist},  substitute  $C=U^{\perp}$  and let the point $v=0$ . We get $\norm{x}=\norm{\R_{U^{\perp}}x}$. Hence, we have
\begin{enumerate}[label=(\alph*)]
\item $0 \in \aff \{x ,\R_{U}x, \R_{U^{\perp}}x\}$ and
\item $\norm{0-x}=\norm{ 0-\R_{U}x}=\norm{0-\R_{U^{\perp}}x} $,
\end{enumerate}
which means that  $(\forall x \in \mathcal{H})$ $\CC{S}(x)=0$. 
\end{proof}

\begin{example} \label{exam:ProperCirMapContinuous}
Assume that $\mathcal{H}=\mathbb{R}^{2}$ and $\mathcal{S}=\{T_{1},T_{2},T_{3}\}$, 
where for every $(x,y) \in \mathbb{R}^{2}$,
\begin{align*}
T_{1}(x,y)=(x,y);\quad
T_{2}(x,y)=(-x,y);\quad
T_{3}(x,y)=\big(x,-\tfrac{1}{4}(x-2)\big).
\end{align*}
Then 
\begin{enumerate}
\item \label{exam:ProperCirMapContinuous:i}
$T_{1}(x,y),T_{2}(x,y),T_{3}(x,y)$ are affinely independent if and only if
$2x \big( -\tfrac{1}{4}(x-2)-y\big) \neq 0$;
\item \label{exam:ProperCirMapContinuous:ii} $\big(\forall (x,y) \in
\mathbb{R}^{2}\big) \quad \CC{\mathcal{S}} (x,y)=\big(0, \frac{1}{2}\big(y
-\frac{1}{4}(x-2)\big)\big)$. 
\end{enumerate}
Consequently, $\CC{\mathcal{S}}$ is proper and continuous. 
\end{example}

The following example shows that even if the operators in $\mathcal{S}$ are continuous,
we generally have 
\begin{align*}
\CC{\mathcal{S}} ~\text{is proper}  \nRightarrow \CC{\mathcal{S}} ~\text{is continuous}.  
\end{align*}

\begin{example} \label{exam:ProperCirMapDiscontinuous}
Assume that $\mathcal{H}=\mathbb{R}^{2}$ and $\mathcal{S}= \{T_{1}, T_{2}, T_{3}\}$,
where for every $(x,y) \in \mathbb{R}^{2}$,
\begin{align*}
T_{1}(x,y)=(2,0);\quad
T_{2}(x,y)=(-2,0);\quad
T_{3}(x,y)=\big(x,-\tfrac{1}{4}(x-2)\big).
\end{align*}
Then 
\begin{enumerate}
 \item \label{exam:ProperCirMapDiscontinuous:i}$\CC{\mathcal{S}}$ is proper; 
 \item \label{exam:ProperCirMapDiscontinuous:ii} Let $(\forall k \geq 1)$ $(x^{(k)},y^{(k)})=(2-\frac{1}{k},0)$. Then $ \lim_{k \rightarrow \infty} \CC{\mathcal{S}}(x^{(k)},y^{(k)}) =(0,-8)\neq (0,0)=\CC{\mathcal{S}}(2,0)$. Consequently,   
 $\CC{\mathcal{S}}$ is not continuous at the point $(2,0)$.
 \end{enumerate}
\end{example} 

\begin{proof}
\cref{exam:ProperCirMapDiscontinuous:i}: Let $(x,y) \in \mathbb{R}^{2}$. Now by \cref{fac:AffinIndeLineInd}, 
\begin{align*}
&T_{1}(x,y),T_{2}(x,y),T_{3}(x,y)~\text{are affinely independent}\\
\Longleftrightarrow & T_{2}(x,y)- T_{1}(x,y),T_{3}(x,y)-T_{1}(x,y) ~\text{are linearly independent} \\
\Longleftrightarrow & (-4,0), \big(x-2, -\tfrac{1}{4}(x-2) \big) ~\text{are linearly independent}\\
 \Longleftrightarrow & \det(A) \neq 0, ~\text{where}~A= \begin{pmatrix} 
-4 & x-2  \\ 
0 & -\frac{1}{4}(x-2) \\
\end{pmatrix}\\
\Longleftrightarrow & x-2 \neq 0.
\end{align*}
Hence, by \cref{cor:x1x2x3Dom}, when $x-2 \neq 0$, we have $\CC{\mathcal{S}}(x,y)\in \mathcal{H}$.
Actually, when $x-2=0$, that is $x=2$, then for every $y \in \mathbb{R}$,
\begin{align*}
T_{1}(2,y)=(2,0), T_{2}(2,y)=(-2,0),T_{3}(2,y)=\big(2,-\tfrac{1}{4}(2-2)\big)=(2,0).
\end{align*}
By \cref{prop:form:m2:Oper}, we know that  $\CC{\mathcal{S}}(x,y)=(0,0)\in \mathcal{H}$. Hence,  $\CC{\mathcal{S}}$ is proper.

\cref{exam:ProperCirMapDiscontinuous:i}: Let $(\overline{x},\overline{y})=(2,0)$, and $(\forall k \geq 1)$ $(x^{(k)},y^{(k)})=(2-\frac{1}{k},0)$. By the analysis in \cref{exam:ProperCirMapDiscontinuous:i} above, we know
\begin{align} \label{eq:exam:barxValue}
\CC{\mathcal{S}}(\overline{x},\overline{y})=(0,0).
\end{align}
On the other hand, since 
\begin{align*}
\mathcal{S}(x^{(k)},y^{(k)})=\Big\{ T_{1}(x^{(k)},y^{(k)}),
T_{2}(x^{(k)},y^{(k)}),T_{3}(x^{(k)},y^{(k)}) \Big\} =\Big\{ (2,0), (-2,0),
\big(2-\tfrac{1}{k},\tfrac{1}{4k}\big) \Big\},
\end{align*} 
and since, by \cref{def:cir:map}, $ \CC{\mathcal{S}}(x^{(k)},y^{(k)})= \CCO(\mathcal{S}(x^{(k)},y^{(k)}) ) $,  we deduce that,  by \cref{fact:CounterExampleContinuity},
\begin{align} \label{eq:exam:xkValue}
\CC{\mathcal{S}}(x^{(k)},y^{(k)}) =\big(0, -8+\tfrac{2}{k} +\tfrac{1}{8k}\big).
\end{align}
Hence, 
\begin{align*}
\lim_{k \rightarrow \infty} \CC{\mathcal{S}}(x^{(k)},y^{(k)}) =(0,-8) \neq (0,0) \stackrel{\cref{eq:exam:barxValue}}{=} \CC{\mathcal{S}}(2,0)
\end{align*}
and we are done. 
\end{proof}

%---------------------------------------------About demiclosedness principle---------------------------------------------
%-----------------------------------------------------------------------------------------------------------------------------------------------------
%----------------------------------------------------------------------------------------------------------------------------------------------------

\subsection{The Demiclosedness Principle for circumcenter mappings} \label{subsec:Demiclosedness}

Let $T\colon \HH\to \HH$ be nonexpansive. 
Then 
\begin{equation}
  \label{e:demiclosed}
\left. 
\begin{array}{c} x_k\weakly x \\
  x_k-Tx_k\to 0
\end{array}
  \right\}
\;\;  \Rightarrow  \;\;
x\in\Fix T. 
\end{equation}
This well known implication (see \cite[Theorem~3(a)]{B1968}) 
is \emph{Browder's Demiclosedness Principle}; it is a powerful tool in the 
study of nonexpansive mappings.
(Technically speaking, \eqref{e:demiclosed} states that 
$\Id-T$ is demiclosed at $0$, but because a shift of a nonexpansive
mapping is still nonexpansive, it is demiclosed everywhere.)
For the sake of brevity, we shall simply say that 
\begin{center}
\enquote{the demiclosedness principle holds for $T$}  whenever
\eqref{e:demiclosed} holds.
\end{center}
Clearly, the demiclosedness principle holds
whenever $T$ is weak-to-strong continuous, which is
the case when $T$ is continuous and $\HH$ is finite-dimensional.
The demiclosedness principle also holds for 
so-called subgradient projectors; see \cite[Lemma~5.1]{BCW2015} for details.

We now obtain a condition sufficient for the circumcenter mapping to satisfy
the demiclosedness principle.
Throughout, we assume $T_1,\ldots,T_m$ are mappings from $\HH$ to $\HH$. 

\begin{theorem} \label{thm:Main:Demi}
 Suppose that the demiclosedness principle holds
 for each element in 
$\mathcal{S} = \{T_{1}, T_{2}, \ldots, T_{m}\}$. 
In addition, assume that $\CC{\mathcal{S}}$ is proper and that the implication 
\begin{align} \label{eq:assum:thm:Main:Demi} 
x_{k} - \CC{\mathcal{S}}x_{k}
\rightarrow 0 \;\;\Rightarrow\;\; (\forall i \in \{1, \ldots, m\})
~\CC{\mathcal{S}}x_{k} -T_{i}x_{k} \rightarrow 0 \end{align}
holds.
Then the demiclosedness principle holds for  $\CC{\mathcal{S}}$ and
 $\Fix \CC{\mathcal{S}} = \bigcap^{m}_{i=1}\Fix T_{i}$.
\end{theorem}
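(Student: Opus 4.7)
The plan is to establish the two conclusions in sequence: first the demiclosedness principle for $\CC{\mathcal{S}}$, and then the fixed-point equality, the latter by specializing the running hypothesis \cref{eq:assum:thm:Main:Demi} to a constant sequence.

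For the demiclosedness step, I would begin with a sequence $(x_k)_{k \in \mathbb{N}}$ in $\HH$ and a point $x \in \HH$ satisfying $x_k \weakly x$ and $x_k - \CC{\mathcal{S}}x_k \to 0$, and aim to show $x \in \Fix \CC{\mathcal{S}}$. The critical move is to invoke the standing hypothesis \cref{eq:assum:thm:Main:Demi}, which converts the premise $x_k - \CC{\mathcal{S}}x_k \to 0$ into $\CC{\mathcal{S}}x_k - T_i x_k \to 0$ for every $i \in \{1, \ldots, m\}$. A triangle inequality then yields $x_k - T_i x_k \to 0$ for every $i$. Since each $T_i$ satisfies the demiclosedness principle by hypothesis and $x_k \weakly x$, I conclude $x \in \Fix T_i$ for every $i$, i.e., $x \in \bigcap_{i=1}^{m} \Fix T_i$. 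Then \cref{prop:CW:FixPointSet}\cref{prop:CW:FixPointSet:inclu} upgrades this to $x \in \Fix \CC{\mathcal{S}}$, which completes the demiclosedness part.

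For the fixed-point equality, the inclusion $\bigcap_{i=1}^{m} \Fix T_i \subseteq \Fix \CC{\mathcal{S}}$ is again a direct appeal to \cref{prop:CW:FixPointSet}\cref{prop:CW:FixPointSet:inclu}. For the reverse inclusion, I would take $x \in \Fix \CC{\mathcal{S}}$ and test \cref{eq:assum:thm:Main:Demi} against the constant sequence $x_k := x$: since $\CC{\mathcal{S}} x_k = x$ we have $x_k - \CC{\mathcal{S}} x_k = 0 \to 0$, whence \cref{eq:assum:thm:Main:Demi} gives $\CC{\mathcal{S}} x_k - T_i x_k = x - T_i x \to 0$ for each $i$, forcing $T_i x = x$. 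Hence $x \in \bigcap_{i=1}^{m} \Fix T_i$, which is the missing inclusion.

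I do not foresee a real technical obstacle here; the argument is more careful bookkeeping than analysis. The only genuine choice to make is the \emph{right} sequence with which to feed hypothesis \cref{eq:assum:thm:Main:Demi}: a general weakly convergent sequence for the demiclosedness part, and the constant sequence $x_k \equiv x$ for the reverse inclusion in the fixed-point equality. Once these two instances are identified, both conclusions follow at once from the triangle inequality together with the individual demiclosedness principles of the operators $T_i$.
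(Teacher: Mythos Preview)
Your proposal is correct and mirrors the paper's proof essentially step for step: the same triangle-inequality reduction to the demiclosedness of each $T_i$ for the first part, and the same constant-sequence specialization of \cref{eq:assum:thm:Main:Demi} together with \cref{prop:CW:FixPointSet}\cref{prop:CW:FixPointSet:inclu} for the fixed-point equality.
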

\begin{proof}
Let $x_{k} \rightharpoonup x $ and 
\begin{align} \label{eq:assum:T:thm:Main:Demi}
 x_{k}- \CC{\mathcal{S}} x_{k} \rightarrow 0.
\end{align}
By \cref{eq:assum:T:thm:Main:Demi} and \cref{eq:assum:thm:Main:Demi},
\begin{align}  \label{eq:Ti:thm:Main:Demi}
(\forall i \in \{1, \ldots, m\}) \quad \CC{\mathcal{S}}x_{k} -T_{i}x_{k} \rightarrow 0.
\end{align}
Hence, 
\begin{align}
(\forall i \in \{1, \ldots, m\})  \quad \norm{x_{k} - T_{i}x_{k} } \leq \norm{x_{k} -\CC{\mathcal{S}} x_{k} } + \norm{ \CC{\mathcal{S}} x_{k} -T_{i}x_{k}} \rightarrow 0.
\end{align}
Because the demiclosedness principle holds
for each $T_{i}$, 
we deduce that $x \in \cap^{m}_{i=1}\Fix T_{i} \subseteq \Fix
\CC{\mathcal{S}}$, where the last inclusion follows from
\cref{prop:CW:FixPointSet}\cref{prop:CW:FixPointSet:inclu}. 
Therefore, $x
-\CC{\mathcal{S}}x =0 $, which shows that the demiclosedness principle 
holds for $\CC{\mathcal{S}}$. 
To verify the remaining assertion, let $\bar{x} \in \Fix \CC{\mathcal{S}}$.
For every $k \in \mathbb{N}$, substitute $x_{k}$ by $\bar{x}$. Then using
the assumption \cref{eq:assum:thm:Main:Demi}, we deduce that $(\forall i \in
\{1, \ldots, m\})$ $\CC{\mathcal{S}}\bar{x} -T_{i}\bar{x}=0$. Combining with
$(\forall i \in \{1, \ldots, m\})$ $\norm{\bar{x} - T_{i}\bar{x} } \leq
\norm{\bar{x} -\CC{\mathcal{S}} \bar{x} } + \norm{ \CC{\mathcal{S}} \bar{x}
-T_{i}\bar{x}}$, we obtain that $\bar{x} \in \cap^{m}_{i=1}\Fix T_{i}$.
Hence, $\Fix \CC{\mathcal{S}} \subseteq \cap^{m}_{i=1}\Fix T_{i}$. Therefore,
the desired result follows from
\cref{prop:CW:FixPointSet}\cref{prop:CW:FixPointSet:inclu}.
\end{proof}

\begin{corollary} \label{cor:Main:Demi:Assum}
Suppose that $T_{1} =\Id$ and that $\CC{\mathcal{S}}$ is proper. Then
the implication 
\begin{align} \label{eq:cor:assum:Main:Demi}
x_{k} - \CC{\mathcal{S}}x_{k}  \rightarrow  0  \;\;\Rightarrow \;\;
(\forall i \in \{1, \ldots, m\}) ~\CC{\mathcal{S}}x_{k} -T_{i}x_{k} \rightarrow 0.
\end{align} 
holds. 
\end{corollary}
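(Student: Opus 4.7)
The plan is essentially a one-line observation exploiting the equidistance property built into the definition of the circumcenter mapping. Since $T_1 = \Id$, for every $k \in \mathbb{N}$ we have $T_1 x_k = x_k$, so the hypothesis $x_k - \CC{\mathcal{S}} x_k \to 0$ is literally the statement $T_1 x_k - \CC{\mathcal{S}} x_k \to 0$, equivalently $\|\CC{\mathcal{S}} x_k - T_1 x_k\| \to 0$.

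Next I would invoke the defining property of $\CC{\mathcal{S}}$, as recorded in \cref{rem:cir:map}: because $\CC{\mathcal{S}}$ is proper, $\CC{\mathcal{S}} x_k \in \mathcal{H}$ for every $k$, and
\begin{equation*}
\|\CC{\mathcal{S}} x_k - T_1 x_k\| = \|\CC{\mathcal{S}} x_k - T_2 x_k\| = \cdots = \|\CC{\mathcal{S}} x_k - T_m x_k\|.
\end{equation*}
Combining this equidistance with the previous step immediately yields $\|\CC{\mathcal{S}} x_k - T_i x_k\| \to 0$ for every $i \in \{1, \ldots, m\}$, which is exactly the desired implication \cref{eq:cor:assum:Main:Demi}.

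There is no real obstacle here; the whole point is that \enquote{equidistance of the circumcenter from all the $T_i x$} reduces convergence in one coordinate to convergence in all of them, and the assumption $T_1 = \Id$ provides a free convergent coordinate coming directly from the hypothesis. The corollary is thus a direct unpacking of \cref{defn:Circumcenter} and \cref{def:cir:map}, and in particular it shows that \cref{thm:Main:Demi} applies automatically whenever the family $\mathcal{S}$ contains the identity and yields a proper circumcenter mapping.
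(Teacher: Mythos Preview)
Your proposal is correct and follows essentially the same approach as the paper: invoke the equidistance property of $\CC{\mathcal{S}}$ from \cref{rem:cir:map} (valid because $\CC{\mathcal{S}}$ is proper), then use $T_{1}=\Id$ to identify the hypothesis $x_{k}-\CC{\mathcal{S}}x_{k}\to 0$ with one of the equal distances, forcing all of them to tend to zero. The paper's proof is the same one-line argument, only more tersely stated.
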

\begin{proof}
Since $\CC{\mathcal{S}}$ is proper, by \cref{rem:cir:map},
$\norm{\CC{\mathcal{S}}x_{k} -x_{k} } = \norm{\CC{\mathcal{S}}x_{k} -
T_{2}x_{k} }=\cdots = \norm{\CC{\mathcal{S}}x_{k} - T_{m}x_{k} }$,
which implies that \cref{eq:cor:assum:Main:Demi} is true.
\end{proof}

\begin{proposition} \label{prop:demi:IdId}
Suppose that $T_{1} =\Id$, that  for every $i \in \{2,\ldots,m\}$, 
the demiclosedness principle holds for $T_{i}$, 
that $\mathcal{S} = \{T_{1}, T_{2}, \ldots, T_{m}\}$,
and that $\CC{\mathcal{S}}$ is proper. 
Then the demiclosedness principle holds for $\CC{\mathcal{S}}$ and 
$\Fix \CC{\mathcal{S}} = \cap^{m}_{i=1}\Fix
T_{i}$. 
\end{proposition}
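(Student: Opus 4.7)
The plan is to reduce this proposition directly to \cref{thm:Main:Demi} by verifying its three hypotheses. \cref{thm:Main:Demi} requires that the demiclosedness principle hold for every $T_i \in \mathcal{S}$, that $\CC{\mathcal{S}}$ be proper, and that the implication \cref{eq:assum:thm:Main:Demi} hold. The last two are either assumed here or follow immediately, so the work consists of unpacking the first condition for the $T_1 = \Id$ case and invoking \cref{cor:Main:Demi:Assum} to obtain the implication for free.

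First I would observe that the demiclosedness principle holds trivially for $T_1 = \Id$: indeed, $\Fix \Id = \HH$, so any weakly convergent sequence $x_k \weakly x$ automatically satisfies $x \in \Fix T_1$, regardless of the condition $x_k - T_1 x_k \to 0$. Combined with the standing hypothesis that the demiclosedness principle holds for $T_2, \ldots, T_m$, this gives demiclosedness of every element of $\mathcal{S}$.

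Next, because $T_1 = \Id$ and $\CC{\mathcal{S}}$ is proper, \cref{cor:Main:Demi:Assum} applies and yields the implication
\begin{align*}
x_k - \CC{\mathcal{S}} x_k \to 0 \;\;\Rightarrow\;\; (\forall i \in \{1,\ldots,m\})\ \CC{\mathcal{S}} x_k - T_i x_k \to 0,
\end{align*}
which is exactly \cref{eq:assum:thm:Main:Demi}. At this point all three hypotheses of \cref{thm:Main:Demi} are in place, and applying it directly delivers both the demiclosedness of $\CC{\mathcal{S}}$ and the fixed point identity $\Fix \CC{\mathcal{S}} = \bigcap_{i=1}^m \Fix T_i$.

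I do not anticipate a real obstacle here; the proposition is essentially a packaged corollary of \cref{thm:Main:Demi} and \cref{cor:Main:Demi:Assum}. The only subtle point worth stating explicitly is why $\Id$ satisfies the demiclosedness principle, since the paper's definition \cref{e:demiclosed} is stated in terms of fixed points rather than as weak-to-strong continuity; noting $\Fix \Id = \HH$ resolves this at once.
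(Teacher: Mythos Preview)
Your proposal is correct and matches the paper's own proof, which simply says to combine \cref{thm:Main:Demi} with \cref{cor:Main:Demi:Assum}. You have merely unpacked that one-line proof, including the observation that $\Id$ trivially satisfies the demiclosedness principle, which the paper leaves implicit.
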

\begin{proof}
Combine \cref{thm:Main:Demi} with \cref{cor:Main:Demi:Assum}.
\end{proof}

We are now ready for the main result of this section. 

\begin{theorem}[a demiclosedness principle for circumcenter mappings] \label{prop:demi:Nonexpan}
Suppose that $T_{1} =\Id$, that each operator in 
$\mathcal{S} = \{T_{1}, T_{2}, \ldots, T_{m}\}$ is nonexpansive, 
and that 
$\CC{\mathcal{S}}$ is proper. Then 
the demiclosedness principle holds for $\CC{\mathcal{S}}$ and 
$\Fix \CC{\mathcal{S}} = \cap^{m}_{i=1}\Fix T_{i}$.
\end{theorem}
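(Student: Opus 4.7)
The plan is to observe that this theorem is essentially an immediate consequence of the infrastructure already developed in \cref{prop:demi:IdId}, once we verify that the nonexpansiveness assumption is enough to give us the demiclosedness hypothesis that \cref{prop:demi:IdId} requires. So the strategy is a short reduction, not a new argument.

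First I would invoke Browder's classical demiclosedness principle, which is cited in the excerpt as \cite[Theorem~3(a)]{B1968}: for any nonexpansive operator $T\colon \HH \to \HH$, the implication $x_k \weakly x$ and $x_k - Tx_k \to 0$ $\Rightarrow$ $x \in \Fix T$ holds. Applying this to each $T_i$ for $i \in \{2,\ldots,m\}$ (each of which is nonexpansive by hypothesis) shows that the demiclosedness principle holds for every such $T_i$. The operator $T_1 = \Id$ trivially satisfies the demiclosedness principle as well, since $x_k - x_k \to 0$ is automatic and $\Fix \Id = \HH$, so this case is harmless.

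With the demiclosedness principle in hand for each $T_i$, together with the standing hypotheses $T_1 = \Id$ and $\CC{\mathcal{S}}$ proper, all assumptions of \cref{prop:demi:IdId} are satisfied. Invoking \cref{prop:demi:IdId} directly yields both conclusions: the demiclosedness principle for $\CC{\mathcal{S}}$, and the fixed-point identity $\Fix \CC{\mathcal{S}} = \bigcap_{i=1}^{m} \Fix T_{i}$.

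I do not foresee any real obstacle, since the heavy lifting was done in \cref{thm:Main:Demi} and \cref{cor:Main:Demi:Assum} (the latter exploits the equidistance condition from \cref{rem:cir:map} to pass from $x_k - \CC{\mathcal{S}}x_k \to 0$ to $\CC{\mathcal{S}}x_k - T_i x_k \to 0$, which is the non-trivial step). The only thing to be slightly careful about is to cite Browder's theorem for $T_i$ with $i \geq 2$ and to note that $T_1 = \Id$ is covered trivially; beyond that the proof is a single appeal to \cref{prop:demi:IdId}.
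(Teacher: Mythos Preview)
Your proposal is correct and matches the paper's own proof, which simply reads ``Combine Browder's Demiclosedness Principle with \cref{prop:demi:IdId}.'' You have spelled out exactly this reduction, with the (harmless) extra observation that $T_1=\Id$ trivially satisfies the demiclosedness principle.
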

\begin{proof}
Combine Browder's Demiclosedness Principle with \cref{prop:demi:IdId}.
\end{proof}

We now present (omitting its easy proof) another consequence of \cref{prop:demi:IdId}. 

\begin{corollary} \label{prop:FiniteIdresult}
Suppose that $\mathcal{H}$ is finite-dimensional, 
that $\mathcal{S} = \{T_{1}, T_{2}, \ldots, T_{m}\}$, 
where $T_{1} = \Id$ and $T_j$ is continuous for every $j \in \{2, \ldots,
m\}$, and that $\CC{\mathcal{S}}$ is proper. Then
the demiclosedness principle holds for $\CC{\mathcal{S}}$. 
In particular, 
\begin{align} 
\label{eq:prop:sadresult:assum}
\left.
\begin{array}{c}
x_{k} \to \overline{x}\\
x_{k} - \CC{\mathcal{S}}x_{k} \to 0
\end{array}
\right\}
 \;\; \Rightarrow \;\;
\overline{x} \in \cap^{m}_{j=1}\Fix T_{j} = \Fix \CC{\mathcal{S}}.
\end{align}
\end{corollary}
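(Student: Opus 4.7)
The plan is to reduce the corollary directly to \cref{prop:demi:IdId}. The only hypothesis of that proposition not already handed to us is that the demiclosedness principle holds for each $T_j$ with $j \in \{2,\ldots,m\}$. So my first step would be to verify this from finite-dimensionality. Since $\mathcal{H}$ is finite-dimensional, weak convergence coincides with norm convergence, so if $x_k \weakly x$ then in fact $x_k \to x$. Combining this with the demiclosedness assumption $x_k - T_j x_k \to 0$ yields $T_j x_k \to x$, while the continuity of $T_j$ gives $T_j x_k \to T_j x$; hence $x = T_j x$, i.e., $x \in \Fix T_j$. Thus the demiclosedness principle holds for every continuous operator on $\mathcal{H}$, and in particular for $T_1 = \Id$ (trivially) and for each $T_j$ with $j \geq 2$.

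With these observations in hand, I would invoke \cref{prop:demi:IdId} directly: its three hypotheses ($T_1 = \Id$, the demiclosedness principle for every $T_j$, and properness of $\CC{\mathcal{S}}$) are all in place, so it yields both that the demiclosedness principle holds for $\CC{\mathcal{S}}$ and that $\Fix \CC{\mathcal{S}} = \bigcap_{j=1}^{m} \Fix T_j$. For the \enquote{in particular} assertion \cref{eq:prop:sadresult:assum}, I would simply note that strong convergence $x_k \to \overline{x}$ implies weak convergence $x_k \weakly \overline{x}$, so the demiclosedness principle just established applies and gives $\overline{x} \in \Fix \CC{\mathcal{S}}$; combined with the fixed-point equality this also places $\overline{x}$ in $\bigcap_{j=1}^{m}\Fix T_j$.

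There is really no substantive obstacle here; the crux is the elementary observation that continuity upgrades to the demiclosedness principle in finite dimensions, after which the corollary is essentially a citation of \cref{prop:demi:IdId}. The one pitfall to avoid is trying to route through \cref{prop:demi:Nonexpan} instead, which would require nonexpansiveness of the $T_j$ that we are not given; the finite-dimensional hypothesis is precisely what allows us to sidestep nonexpansiveness and get by with mere continuity.
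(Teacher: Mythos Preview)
Your proposal is correct and matches the paper's approach exactly: the paper omits the proof, stating only that the corollary is an easy consequence of \cref{prop:demi:IdId}, having already remarked (just after \eqref{e:demiclosed}) that the demiclosedness principle holds for any continuous operator when $\mathcal{H}$ is finite-dimensional. Your observation about avoiding \cref{prop:demi:Nonexpan} is apt but not something the paper comments on.
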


We now provide an example where 
the demiclosedness principle does not hold for $\CC{\mathcal{S}}$.

\begin{example}  \label{exam:DemicloseTRUE3}
Suppose that $\mathcal{H} =\mathbb{R}^{2}$. Set $L=\big\{(u,v)\in\HH
~\big|~v=-\frac{1}{4}u+\frac{1}{2}\big\}$. 
Assume that $\mathcal{S} = \{T_{1}, T_{2}, T_{3}\}$, where
\begin{align*}
\big(\forall (u,v) \in \HH \big) \quad 
T_{1}(u,v) =(-2,0),\; T_{2}(u,v) =(2,0) \quad \text{and} \quad T_{3}(u,v) = \Pro_{L}(u,v).
\end{align*}
Set $\overline{x} =(0,-8)$ and 
$(\forall k \in \mathbb{N} \smallsetminus \{0\})$ 
$x_{k}=\big(\tfrac{1}{k},- \tfrac{1}{4k}-8\big)$.  
Then the following hold. 
 \begin{enumerate}
 \item \label{exam:DemicloseTRUE3:proper} $\CC{\mathcal{S}}$ is proper.
 \item \label{exam:DemicloseTRUE3:fix} $\Fix \CC{\mathcal{S}} =\varnothing$.
 \item \label{exam:DemicloseTRUE3:converg} 
 $\displaystyle \lim_{k \rightarrow \infty}\CC{\mathcal{S}}x_{k} = \overline{x} =\lim_{k
 \rightarrow \infty} x_{k} $; consequently, 
 $\displaystyle \lim_{k \rightarrow \infty}
 (x_{k} - \CC{\mathcal{S}}x_{k}) =0$. (See also \cref{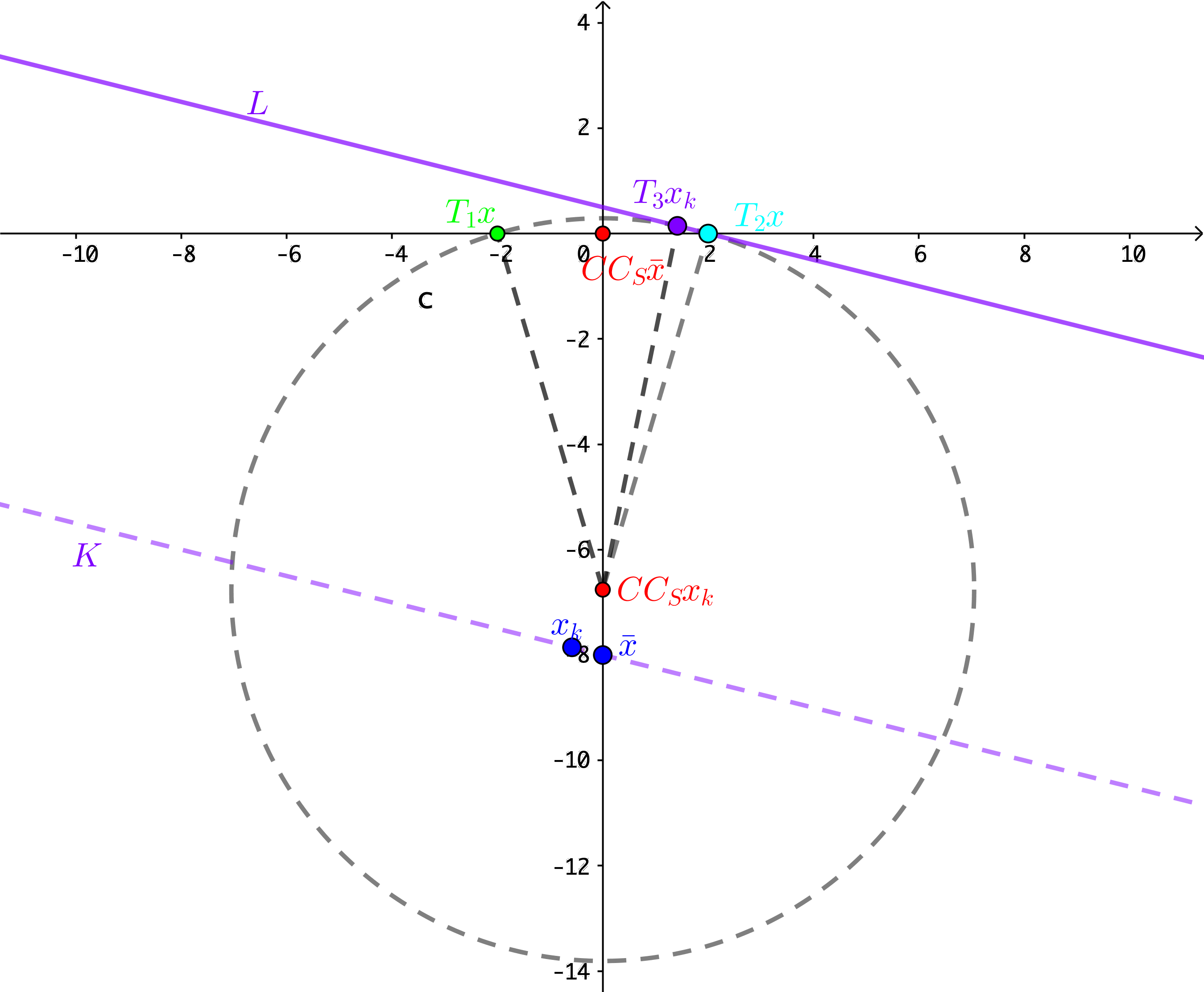}.) 
  \item \label{exam:DemicloseTRUE3:overlinex} 
$\overline{x} \not\in \Fix \CC{\mathcal{S}}$; 
consequently, the demiclosedness principle does not hold
for $\CC{\mathcal{S}}$. 
 \end{enumerate}
\end{example}

\begin{proof}
\cref{exam:DemicloseTRUE3:proper}: Let $x \in \mathcal{H} $. If $ T_{3}x \in
\mathbb{R}\cdot(1,0)$, then $T_{3}x=(2,0)$ and so $ \CC{\mathcal{S}}x
=(0,0)$. Now assume that $ T_{3}x \not\in \mathbb{R}\cdot(1,0)$. Then $T_{1}x, T_{2}x,
T_{3}x $ are affinely independent. 
Hence, by \cref{thm:Proper3}, $ \CC{\mathcal{S}}x \in \mathcal{H}$. 
Altogether, $\CC{\mathcal{S}}$ is proper.
 
 \cref{exam:DemicloseTRUE3:fix}: Since $T_{1}x=(-2,0)$ and $T_{2}x=(2,0)$, by
definition of circumcenter mapping,
\begin{align*}
\CC{\mathcal{S}}x \in \mathbb{R} \cdot (0,1),
\end{align*}
which implies if $x \in \Fix \CC{\mathcal{S}}$, then $x \in \mathbb{R} \cdot
(0,1)$. Since $T_{3}(0,-8) =
 \Pro_{L}(0,-8) = (2,0)$, by \cref{prop:form:m2:Oper}, $
 \CC{\mathcal{S}}(0,-8) =(0,0) \neq (0,-8)$. Hence,
$(0,-8) \not\in
\Fix \CC{\mathcal{S}} $. Let $x:=(0,v) \in \big( \mathbb{R} \cdot (0,1)
\big)\smallsetminus \{(0,-8)\}$. As seen in the proof of
\cref{exam:DemicloseTRUE3:proper}, the vectors $T_{1}x, T_{2}x, T_{3}x $ are affinely
independent.
Hence, by definition of circumcenter mapping,  in this case 
\begin{align*}
\CC{\mathcal{S}}x ~\text{is the intersection of }~\mathbb{R} \cdot (0,1)  ~\text{and the perpendicular bisector of the two points}~T_{2}x, T_{3}x.
\end{align*}
Denote by $\CC{\mathcal{S}}x:= (0,w)$. Some easy calculation yields that if $v > -8$, then $w >v$; if $v <-8$, then $w <v$, which means that $\CC{\mathcal{S}}x \neq x $. Altogether, $\Fix \CC{\mathcal{S}} =\varnothing$.
 
 \cref{exam:DemicloseTRUE3:converg}: Let $k \in \mathbb{N} \smallsetminus
 \{0\}$. Since $x_{k}=\big(\tfrac{1}{k},- \tfrac{1}{4k}-8\big)$, 
 by definition of $T_{3}$,
 \begin{align*}
 T_{3}x_{k} = \big(2- \tfrac{1}{k}, \tfrac{1}{4k}\big).
 \end{align*}
 Hence
 \begin{align*}
 \CC{\mathcal{S}}x_{k} = CC \big\{(-2,0), (2,0),\big(2- \tfrac{1}{k}, \tfrac{1}{4k}\big) \big\}.
 \end{align*}
By \cref{exam:ProperCirMapDiscontinuous}\cref{exam:ProperCirMapDiscontinuous:ii}, we obtain that
\begin{align*}
\lim_{k \rightarrow \infty} \CC{\mathcal{S}}x_{k} =(0,-8)=\overline{x} =\lim_{k \rightarrow \infty} x_{k}.
\end{align*}

 \cref{exam:DemicloseTRUE3:overlinex}: By \cref{exam:DemicloseTRUE3:fix}, $\overline{x}
 \not\in \Fix \CC{\mathcal{S}} $. 
 Therefore, the demiclosedness principle does not hold 
for  $\CC{\mathcal{S}}$.
\end{proof}

\begin{figure}[H] 
\begin{center} \includegraphics[scale=0.3]{Demiconti.png}
\end{center}
\caption{\cref{exam:DemicloseTRUE3} illustrates that the demiclosedness principle may not hold for $\CC{\mathcal{S}}$.} \label{Demiconti.png}
\end{figure}

\begin{remark} \label{rem:Demiclosedness:holds:discon}
Consider \cref{exam:DemicloseTRUE3} where 
each $T_i$ is a projector and thus firmly nonexpansive
but $\Fix \CC{\mathcal{S}} = \varnothing$.
Is it possible to obtain an example where
the demiclosedness principle does not hold but 
yet $\Fix \CC{\mathcal{S}} \neq \varnothing$?
We do not know the answer to this question. 
\end{remark}

%---------------------------------------------Circumcenter mapping induced by reflectors---------------------------------------------
%-----------------------------------------------------------------------------------------------------------------------------------------------------
%----------------------------------------------------------------------------------------------------------------------------------------------------
\section{Circumcenter mappings induced by reflectors} \label{sec:CircumMappingReflectors}
Recall that $m \in \mathbb{N} \smallsetminus \{0\}$ and that $U_{1}, \ldots, U_{m}$ are closed affine subspaces in the real Hilbert space $\mathcal{H}$ with $\cap^{m}_{i=1} U_{i} \neq \varnothing$. In the whole section, denote 
\begin{empheq}[box = \mybluebox]{equation} \label{eq:DefinitionOmega}
\Omega = \Big\{ \R_{U_{i_{r}}}\cdots \R_{U_{i_{2}}}\R_{U_{i_{1}}}  ~\Big|~ r \in \mathbb{N}, ~\mbox{and}~ i_{1}, i_{2},\ldots,  i_{r} \in \{1, \ldots,m\}    \Big\}.
\end{empheq}
By the empty product convention, $\prod^{0}_{j=1}\R_{U_{i_{j}}} =\Id$. So, when $r=0$ in \cref{eq:DefinitionOmega}, $\Id \in \Omega$. Hence, $\Omega$ is the set consisting of the identity operator, $\Id$, and all of the compositions of $(\forall i \in \{1, \ldots,m\})$ $\R_{U_{i}}$.

Throughout this section, we assume that 
\begin{empheq}[box = \mybluebox]{equation*}
\Id \in \mathcal{S} \subseteq \aff \Omega.
\end{empheq}

\subsection{Proper circumcenter mappings induced by reflectors}
Note that for every $T$ in $\mathcal{S}$, where $ \mathcal{S} \subseteq \Omega$, there exists $r \in \mathbb{N}$ and $i_{1}, \ldots, i_{r} \in \{1, \ldots, m\}$ such that $T=\R_{U_{i_{r}}}\cdots \R_{U_{i_{2}}}\R_{U_{i_{1}}} $. Therefore, from now on we assume
\begin{empheq}[box = \mybluebox]{equation*} 
\R_{U_{i_{r}}}\cdots \R_{U_{i_{2}}}\R_{U_{i_{1}}}~\text{is a representative element of the set}~\mathcal{S},~\text{where}~ \Id \in \mathcal{S} \subseteq \Omega.
\end{empheq}

We start with a useful lemma. 
\begin{lemma} \label{lem:EquDistS}
Assume that  $\Id \in \mathcal{S} \subseteq \Omega$. Let $x \in \mathcal{H}$. Then for every $\R_{U_{i_{r}}}\cdots \R_{U_{i_{2}}}\R_{U_{i_{1}}}  \in \mathcal{S}$,
\begin{align*}
(\forall u \in \cap^{m}_{i=1} U_{i}) \quad \norm{x- u}=\norm{\R_{U_{i_{r}}}\cdots \R_{U_{i_{2}}}\R_{U_{i_{1}}}x-u}.
\end{align*}
\end{lemma}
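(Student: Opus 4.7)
The plan is to prove this by induction on the length $r$ of the composition, using \cref{prop:PR}\cref{prop:PR:ReflectorEquidist} as the single engine. That result says: for any closed affine subspace $C$, every point of $C$ is equidistant from $y$ and $\R_{C}y$ for arbitrary $y\in\HH$. Since we are given $u\in\bigcap^{m}_{i=1}U_{i}$, in particular $u\in U_{i_{j}}$ for every index $i_{j}$ appearing in the composition, so this lemma is directly applicable at each step.

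First I would handle the base case $r=0$, where by the empty product convention the operator is $\Id$ and the identity $\norm{x-u}=\norm{x-u}$ is trivial. For the inductive step, suppose the conclusion holds for all compositions of length $r-1$, and consider a composition $\R_{U_{i_{r}}}\cdots\R_{U_{i_{1}}}\in\mathcal{S}$. Set $y:=\R_{U_{i_{r-1}}}\cdots\R_{U_{i_{1}}}x$. The inductive hypothesis yields $\norm{x-u}=\norm{y-u}$. Since $u\in U_{i_{r}}$, applying \cref{prop:PR}\cref{prop:PR:ReflectorEquidist} with $C=U_{i_{r}}$ and $v=u$ gives $\norm{y-u}=\norm{\R_{U_{i_{r}}}y-u}$, which is precisely $\norm{\R_{U_{i_{r}}}\cdots\R_{U_{i_{1}}}x-u}$. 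Chaining these equalities completes the induction.

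The argument is essentially a one-line telescoping: each reflector $\R_{U_{i_{j}}}$ preserves the distance to any point of $U_{i_{j}}$, and $u$ lies in all the $U_{i_{j}}$'s because it lies in the full intersection $\bigcap^{m}_{i=1}U_{i}$. There is no real obstacle here; the only thing to be careful about is to set up the induction cleanly so that the base case $r=0$ (i.e., the identity element of $\Omega$, whose presence in $\mathcal{S}$ is guaranteed by $\Id\in\mathcal{S}$) is handled explicitly, and to note that membership of $\R_{U_{i_{r}}}\cdots\R_{U_{i_{1}}}$ in $\mathcal{S}\subseteq\Omega$ is what licenses us to assume such a representation in the first place.
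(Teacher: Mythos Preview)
Your proposal is correct and essentially the same as the paper's proof: both repeatedly invoke \cref{prop:PR}\cref{prop:PR:ReflectorEquidist} to telescope the distance through the chain of reflectors, using that $u\in\bigcap^{m}_{i=1}U_{i}\subseteq U_{i_{j}}$ at each step. The only cosmetic difference is that the paper writes out the chain of equalities explicitly while you package it as a formal induction on $r$; note that your inductive hypothesis is really about arbitrary compositions in $\Omega$ (not just those in $\mathcal{S}$), since the truncated product $\R_{U_{i_{r-1}}}\cdots\R_{U_{i_{1}}}$ need not lie in $\mathcal{S}$, but this causes no difficulty.
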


\begin{proof}
Let $u \in \cap^{m}_{i=1} U_{i}$. Because $U_{1}, \ldots ,U_{m}$ are closed affine subspaces and $u \in \cap^{m}_{i=1} U_{i} \subseteq \cap^{r}_{j=1} U_{i_{j}} $, by \cref{prop:PR}\cref{prop:PR:ReflectorEquidist}, we have
\begin{align*}
 \norm{x-u} &= \norm{\R_{U_{i_{1}}}x-u} ~~~~~~(\text{by}~ u \in U_{i_{1}})\\
 \norm{\R_{U_{i_{1}}}x-u} &= \norm{\R_{U_{i_{2}}}\R_{U_{i_{1}}}x-u} ~~~~~~(\text{by}~ u \in U_{i_{2}})\\
&\cdots \\
\norm{\R_{U_{i_{r-1}}}\cdots \R_{U_{i_{2}}}\R_{U_{i_{1}}}x-u} &= \norm{\R_{U_{i_{r}}} \R_{U_{i_{r-1}}}\cdots \R_{U_{i_{2}}}\R_{U_{i_{1}}}x-u} ~~~~~~(\text{by}~ u \in U_{i_{r}}),
\end{align*} 
which yield
\begin{align*}
\norm{x-u}=\norm{\R_{U_{i_{r}}}\R_{U_{i_{r-1}}}\cdots \R_{U_{i_{2}}}\R_{U_{i_{1}}}x-u}.
\end{align*}
\end{proof}

\begin{proposition} \label{thm:CCS:proper:Lem}
Assume that $\Id \in \mathcal{S} \subseteq \Omega$. Let $x \in \mathcal{H}$. Then for every $u \in \cap^{m}_{i=1} U_{i}$,
\begin{enumerate}
\item \label{thm:CCS:proper:belong} $\Pro_{\aff (\mathcal{S}(x))}(u) \in \aff (\mathcal{S}(x))$, and 
\item  \label{thm:CCS:proper:EquaDistance} for every $\R_{U_{i_{r}}}\cdots \R_{U_{i_{2}}}\R_{U_{i_{1}}} \in \mathcal{S}$,
\begin{align*}
\norm{\Pro_{\aff (\mathcal{S}(x))}(u) -x } =\norm{\Pro_{\aff (\mathcal{S}(x))}(u) -\R_{U_{i_{r}}}\cdots \R_{U_{i_{2}}}\R_{U_{i_{1}}}x}.
\end{align*}
\end{enumerate}
\end{proposition}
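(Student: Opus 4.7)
The plan is to deduce both assertions from a few standard properties of orthogonal projection onto closed affine subspaces together with \cref{lem:EquDistS}. Throughout, fix $x \in \mathcal{H}$ and $u \in \bigcap_{i=1}^m U_i$, and write $A := \aff(\mathcal{S}(x))$ and $p := \Pro_A(u)$. Observe first that $\mathcal{S}(x)$ is a finite set, so $A$ is a finite-dimensional (hence closed) affine subspace of $\mathcal{H}$, which makes the projector $\Pro_A$ well defined.

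For \cref{thm:CCS:proper:belong}, the claim $p \in A$ is immediate from the definition of the projector onto a closed affine subspace (alternatively, this is built into \cref{prop:PR}\cref{prop:PR:Characterization}), so nothing further is needed.

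For \cref{thm:CCS:proper:EquaDistance}, the key idea is that both $x$ and $Tx := \R_{U_{i_r}}\cdots\R_{U_{i_1}} x$ lie in $\mathcal{S}(x) \subseteq A$ (the former because $\Id \in \mathcal{S}$, the latter because $T \in \mathcal{S}$). Applying the Pythagorean identity \cref{prop:PR}\cref{prop:PR:Pythagoras} to the pair $(u, x)$ with respect to $A$, and then to the pair $(u, Tx)$, I would obtain
\begin{align*}
\norm{u - x}^2 &= \norm{u - p}^2 + \norm{x - p}^2,\\
\norm{u - Tx}^2 &= \norm{u - p}^2 + \norm{Tx - p}^2.
\end{align*}
On the other hand, \cref{lem:EquDistS} gives $\norm{x - u} = \norm{Tx - u}$. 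Subtracting the two displayed equations and taking square roots then yields $\norm{x - p} = \norm{Tx - p}$, which is exactly \cref{thm:CCS:proper:EquaDistance}.

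I do not anticipate any serious obstacle: the only thing to double check is that the Pythagorean identity of \cref{prop:PR}\cref{prop:PR:Pythagoras}, stated there for closed convex subsets, applies to the closed affine subspace $A$ (which it does, since closed affine subspaces are closed convex), and that $A$ is indeed closed so that the projector exists and the identity is available. Once these points are noted, the argument is a one-line cancellation after invoking \cref{lem:EquDistS}.
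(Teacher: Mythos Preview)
Your proposal is correct and follows essentially the same approach as the paper: both set $p=\Pro_{\aff(\mathcal{S}(x))}(u)$, apply the Pythagorean identity of \cref{prop:PR}\cref{prop:PR:Pythagoras} once with $v=x$ and once with $v=\R_{U_{i_r}}\cdots\R_{U_{i_1}}x$, and then cancel using \cref{lem:EquDistS}. One minor remark: \cref{prop:PR} is already stated for closed \emph{affine} subspaces, so your caveat about closed convex subsets is unnecessary.
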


\begin{proof}
\cref{thm:CCS:proper:belong}: Let $u \in \cap^{m}_{i=1} U_{i}$. Because $\aff (\mathcal{S}(x))$ is the translate of a finite-dimensional linear subspace, $\aff (\mathcal{S}(x))$ is a closed affine subspace. Hence, we know $\Pro_{\aff (\mathcal{S}(x))}(u)$ is well-defined. 
Clearly, $\Pro_{\aff (\mathcal{S}(x))}(u) \in \aff (\mathcal{S}(x))$, i.e., \cref{thm:CCS:proper:belong} is true. 

\cref{thm:CCS:proper:EquaDistance}: Take an arbitrary but fixed element $\R_{U_{i_{r}}}\cdots \R_{U_{i_{2}}}\R_{U_{i_{1}}}$ in $\mathcal{S}$. Since $\Id, \R_{U_{i_{r}}}\cdots \R_{U_{i_{2}}}\R_{U_{i_{1}}} \in \mathcal{S}$, we know $x, \R_{U_{i_{r}}}\cdots \R_{U_{i_{2}}}\R_{U_{i_{1}}}x \in \mathcal{S}(x) \subseteq \aff (\mathcal{S}(x))$. Denote $p = \Pro_{\aff (\mathcal{S}(x))}(u)$. Substituting $C=\aff (\mathcal{S}(x))$, $x=u$ and $v=x$ in \cref{prop:PR}\cref{prop:PR:Pythagoras}, we deduce
\begin{align}\label{thm:eq:1}
\norm{u-p}^{2}+\norm{x-p}^{2} &= \norm{u-x}^{2}. 
\end{align}
Similarly, substitute $C=\aff (\mathcal{S}(x))$, $x=u$ and $v=\R_{U_{i_{r}}}\cdots \R_{U_{i_{2}}}\R_{U_{i_{1}}}x$ in \cref{prop:PR}\cref{prop:PR:Pythagoras} to obtain
\begin{align} \label{thm:eq:2}
\norm{u-p}^{2}+\norm{\R_{U_{i_{r}}}\cdots \R_{U_{i_{2}}}\R_{U_{i_{1}}}x-p}^{2} &= \norm{u-\R_{U_{i_{r}}}\cdots \R_{U_{i_{2}}}\R_{U_{i_{1}}}x}^{2}.  
\end{align}
On the other hand, by \cref{lem:EquDistS}, we know
\begin{align}  \label{thm:eq:RRxu}
\norm{x-u}=\norm{\R_{U_{i_{r}}}\cdots \R_{U_{i_{2}}}\R_{U_{i_{1}}}x-u}.
\end{align}
Combining \cref{thm:eq:RRxu} with \cref{thm:eq:1} and \cref{thm:eq:2}, we yield 
\begin{align*}
\norm{p-x} =\norm{p-\R_{U_{i_{r}}}\cdots \R_{U_{i_{2}}}\R_{U_{i_{1}}}x}.
\end{align*}
Since $\R_{U_{i_{r}}}\cdots \R_{U_{i_{2}}}\R_{U_{i_{1}}} \in \mathcal{S}$ is arbitrary, thus \cref{thm:CCS:proper:EquaDistance} holds.
\end{proof}

Combining \cref{thm:CW:ExistWellDefined} with \cref{thm:CCS:proper:Lem}, we deduce the theorem below which is one of the main results in this paper. 

\begin{theorem} \label{thm:CCS:proper}
Assume that $\Id \in \mathcal{S} \subseteq \Omega$. 
Then the following hold:
\begin{enumerate}
\item \label{thm:CCS:proper:prop} The circumcenter mapping $\CC{\mathcal{S}} : \mathcal{H} \rightarrow \mathcal{H}$ induced by $\mathcal{S}$ is $\emph{proper}$, i.e., for every $x \in \mathcal{H}$, $\CC{\mathcal{S}}x$ is the unique point satisfying the two conditions below: 
\begin{enumerate}
\item  \label{set:CW:i} $\CC{\mathcal{S}}x\in  \aff (\mathcal{S}(x))$, and 
\item  \label{set:CW:ii} $(\forall \R_{U_{i_{k}}}\cdots \R_{U_{i_{1}}} \in \mathcal{S})$ $\norm{\CC{\mathcal{S}}x-x} =\norm{\CC{\mathcal{S}}x-\R_{U_{i_{k}}}\cdots \R_{U_{i_{1}}}x}$.
\end{enumerate}
\item \label{thm:CCS:AllU} $(\forall x \in \mathcal{H})$  $(\forall u \in \cap^{m}_{i=1} U_{i})$  $\CC{\mathcal{S}}x= \Pro_{\aff (\mathcal{S}(x))}(u)$.

\item \label{thm:CCS:PaffU} $(\forall x \in \mathcal{H})$ $\CC{\mathcal{S}}x= \Pro_{\aff (\mathcal{S}(x))}(\Pro_{\cap^{m}_{i=1} U_{i}} x)$.
\end{enumerate}
\end{theorem}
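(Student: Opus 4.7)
The plan is to stitch together \cref{thm:CCS:proper:Lem} and \cref{thm:CW:ExistWellDefined}, and then read off (ii) and (iii) from the uniqueness embedded in the definition of the circumcenter mapping.

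For \cref{thm:CCS:proper:prop}, fix $x \in \mathcal{H}$, and pick any $u \in \bigcap_{i=1}^{m} U_{i}$, which is nonempty by standing assumption. Set $p(x) := \Pro_{\aff (\mathcal{S}(x))}(u)$. By \cref{thm:CCS:proper:Lem}\cref{thm:CCS:proper:belong}, we have $p(x) \in \aff(\mathcal{S}(x))$. Because $\Id \in \mathcal{S}$, the identity $x$ appears in $\mathcal{S}(x)$, and \cref{thm:CCS:proper:Lem}\cref{thm:CCS:proper:EquaDistance} gives $\|p(x) - x\| = \|p(x) - \R_{U_{i_{r}}}\cdots \R_{U_{i_{1}}}x\|$ for every $\R_{U_{i_{r}}}\cdots \R_{U_{i_{1}}} \in \mathcal{S}$; in particular, the scalars $\{\|p(x) - T x\| : T \in \mathcal{S}\}$ form a singleton. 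Applying \cref{thm:CW:ExistWellDefined} with this $p(x)$ yields that $\CC{\mathcal{S}}$ is proper and $\CC{\mathcal{S}}x = p(x)$. The uniqueness claim in \cref{set:CW:i}--\cref{set:CW:ii} is built into \cref{rem:cir:map}.

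For \cref{thm:CCS:AllU}, the key observation is that the argument above does not depend on which $u \in \bigcap_{i=1}^{m} U_{i}$ was chosen. Thus, for any such $u$, the point $\Pro_{\aff(\mathcal{S}(x))}(u)$ satisfies both defining conditions of the circumcenter, and by the uniqueness from \cref{rem:cir:map} it must coincide with $\CC{\mathcal{S}}x$. This gives $\CC{\mathcal{S}}x = \Pro_{\aff(\mathcal{S}(x))}(u)$ for every $u \in \bigcap_{i=1}^{m} U_{i}$.

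For \cref{thm:CCS:PaffU}, note first that $\bigcap_{i=1}^{m} U_{i}$ is a nonempty closed affine subspace (intersection of such subspaces), so $\Pro_{\bigcap_{i=1}^{m} U_{i}} x$ is well-defined and lies in $\bigcap_{i=1}^{m} U_{i}$. Plugging $u = \Pro_{\bigcap_{i=1}^{m} U_{i}} x$ into \cref{thm:CCS:AllU} yields the stated formula.

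The only genuinely delicate step is the first one: verifying that $p(x) := \Pro_{\aff(\mathcal{S}(x))}(u)$ actually is equidistant from all elements of $\mathcal{S}(x)$. But this was precisely the content of \cref{thm:CCS:proper:Lem}, whose proof in turn relied on \cref{lem:EquDistS} (an iterated application of the reflector isometry \cref{prop:PR}\cref{prop:PR:ReflectorEquidist} telescoping through each $U_{i_{j}}$). Once that lemma is in hand, the present theorem reduces to bookkeeping and invoking the uniqueness of the circumcenter.
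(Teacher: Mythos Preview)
Your proof is correct and follows the same approach as the paper: both invoke \cref{thm:CCS:proper:Lem} and \cref{thm:CW:ExistWellDefined} to obtain \cref{thm:CCS:proper:prop} and \cref{thm:CCS:AllU}, and then specialize \cref{thm:CCS:AllU} with $u=\Pro_{\cap_{i=1}^{m}U_{i}}x$ for \cref{thm:CCS:PaffU}. Your write-up is simply a more detailed unpacking of what the paper states in two lines.
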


\begin{proof}
\cref{thm:CCS:proper:prop} and \cref{thm:CCS:AllU}: The required results follow from \cref{thm:CW:ExistWellDefined} and \cref{thm:CCS:proper:Lem}. 

\cref{thm:CCS:PaffU}: Since $\Pro_{\cap^{m}_{i=1} U_{i}} x \in \cap^{m}_{i=1} U_{i}$, the desired result comes from \cref{thm:CCS:AllU}. 
\end{proof}

We now list several proper circumcenter mappings induced by reflectors; the
properness of some of these mappings is derived from \cref{thm:CCS:proper}.

\begin{example} \label{ex:m:m} Assume that $\mathcal{S}=\{\Id, \R_{U_{1}},
\R_{U_{2}}, \ldots, \R_{U_{m}}\}$. By 
\cref{thm:CCS:proper}\cref{thm:CCS:proper:prop}, $\CC{\mathcal{S}}$ is
proper.
\end{example}

\begin{example} \label{ex:m:mm} Assume that $\mathcal{S}=\{\Id,
\R_{U_{2}}\R_{U_{1}}, \R_{U_{3}}\R_{U_{2}}, \ldots, \R_{U_{m}}\R_{U_{m-1}},
\R_{U_{1}}\R_{U_{m}}\}$. By
\cref{thm:CCS:proper}\cref{thm:CCS:proper:prop}, $\CC{\mathcal{S}}$ is
proper.
\end{example}

\begin{example}[Behling et al.\ \cite{BCS2017}]
\label{ex:2:3} Assume that $m=2$ and that $\mathcal{S}=\{\Id, \R_{U_{1}},
\R_{U_{2}}\R_{U_{1}} \}$. Then, by
\cref{thm:CCS:proper}\cref{thm:CCS:proper:prop}, $\CC{\mathcal{S}}$ is
proper.
\end{example}

\begin{example}[Behling et al.\ \cite{BCS2018}]
\label{ex:m:mm:R} Assume that $\mathcal{S}=\{\Id, \R_{U_{1}},
\R_{U_{2}}\R_{U_{1}}, \ldots, \R_{U_{m}}\cdots \R_{U_{2}}\R_{U_{1}}\}$. Then,
by \cref{thm:CCS:proper}\cref{thm:CCS:proper:prop}, $\CC{\mathcal{S}}$ is
proper.
\end{example}

\begin{remark}
In fact, the C--DRM operator $C_{T}$ defined in {\rm
\cite[Section~2]{BCS2017}} is the $\CC{\mathcal{S}}$ operator of 
\cref{ex:2:3} while the CRM operator $C$ defined in {\rm
\cite[page~159]{BCS2018}} is the
operator $\CC{\mathcal{S}}$ from \cref{ex:m:mm:R}.
\end{remark}

\begin{example} \label{ex:2:2} Assume that $m=2$ and that $\mathcal{S}=\{\Id,
\R_{U_{2}}\R_{U_{1}} \}$. By \cref{prop:form:m2:Oper},
\begin{align*}
\CC{\mathcal{S}} =\frac{\Id+\R_{U_{2}}\R_{U_{1}}}{2},
\end{align*}
which is the well-known Douglas--Rachford splitting operator. 
Clearly, $\CC{\mathcal{S}}$ is proper.
\end{example}

\begin{example} \label{ex:m:2:Proj} Assume that $m=2$ and that $\mathcal{S}=\{\Id,
\R_{U_{1}}, \R_{U_{2}}\}$. Then $\CC{\mathcal{S}}$ is proper. Moreover,
\begin{align*}
(\forall x \in U_{1}) \quad \CC{\mathcal{S}}x=\Pro_{U_{2}}x \quad \text{and} \quad (\forall x \in U_{2}) \quad \CC{\mathcal{S}}x=\Pro_{U_{1}}x.
\end{align*}
\end{example}
\begin{proof}
The first assertion follows from \cref{ex:m:m}. 
As for the remaining ones, note that
\begin{align} \label{eq:ex:m:2:Proj}
(\forall x \in U_{1}) \quad \mathcal{S}(x)=\{x, \R_{U_{2}}x\} \quad \text{and} \quad (\forall x \in U_{2}) \quad \mathcal{S}(x)=\{x, \R_{U_{1}}x\}.
\end{align}
Combining \cref{eq:ex:m:2:Proj} with \cref{prop:form:m2:Oper}, we obtain that
\begin{align*}
(\forall x \in U_{1}) \quad \CC{\mathcal{S}}x=\frac{x+\R_{U_{2}}x}{2}=\Pro_{U_{2}}x \quad \text{and} \quad (\forall x \in U_{2}) \quad  \CC{\mathcal{S}}x=\frac{x+\R_{U_{1}}x}{2}=\Pro_{U_{1}}x.
\end{align*}
The proof is complete.
\end{proof}

\begin{example} \label{exam:OurCCS}
Assume that $m=2$ and that 
$\mathcal{S}=\{\Id, \R_{U_{1}}, \R_{U_{2}}, \R_{U_{2}}\R_{U_{1}} \}$. 
Let $x \in \mathcal{H}$ and set 
$l = \card \{ x, \R_{U_{1}}x,\R_{U_{2}}x, \R_{U_{2}}\R_{U_{1}}x \}$. Then exactly one of the following cases occurs.
\begin{enumerate}
\item \label{thm:SymForm:1} $l=1$ and $\CC{\mathcal{S}}x =x$.
\item \label{thm:SymForm:2} $l=2$, say $S(x)=\{x_{1},x_{2}\}$, 
where $x_{1}$ and $x_{2}$ are two distinct elements in $S(x)$, and $\CC{\mathcal{S}}x=\frac{x_{1}+x_{2}}{2}$.
\item \label{thm:SymForm:3} 
$l=3$, say $S(x)=\{x_{1},x_{2},x_{3}\}$, 
where $x_{1}$, $x_{2}$, $x_{3}$ are pairwise distinct elements in $S(x)$, and
\begin{align*}
\scalemath{0.9}{\CC{\mathcal{S}}x = \frac{\norm{x_{2}-x_{3}}^{2} \innp{x_{1}-x_{3},x_{1}-x_{2}}x_{1}+ \norm{x_{1}-x_{3}}^{2} \innp{x_{2}-x_{3},x_{2}-x_{1}}x_{2}+ \norm{x_{1}-x_{2}}^{2} \innp{x_{3}-x_{1},x_{3}-x_{2}}x_{3} }{2(\norm{x_{2}-x_{1}}^{2}\norm{x_{3}-x_{1}}^{2}- \innp{x_{2}-x_{1}, x_{3}-x_{1}}^{2})}}.
\end{align*}
\item \label{thm:SymForm:4} $l=4$ and
\begin{align*}
\CC{\mathcal{S}}x= x_{1}+\frac{1}{2}(x_{2}-x_{1},\ldots,x_{t_{x}}-x_{1})
 G( x_{2}-x_{1},\ldots,x_{t_{x}}-x_{1})^{-1}
\begin{pmatrix}
 \norm{x_{2}-x_{1}}^{2} \\
 \vdots\\
\norm{x_{t_{x}}-x_{1}}^{2} \\
\end{pmatrix},
\end{align*}
where $\{x_{1},x_{2},\ldots,x_{t_{x}}\}=S(x)$, and $x_{1},x_{2},\ldots,x_{t_{x}}$ are affinely independent and 
\begin{align*}
&\quad \quad G( x_{2}-x_{1},\ldots, x_{t_{x}-1}-x_{1}, x_{t_{x}}-x_{1})\\
&=
\begin{pmatrix} 
\norm{x_{2}-x_{1}}^{2} &\innp{x_{2}-x_{1},x_{3}-x_{1}} & \cdots & \innp{x_{2}-x_{1}, x_{t_{x}}-x_{1}}  \\ 
\vdots & \vdots & ~~& \vdots \\
\innp{x_{t_{x}-1}-x_{1},x_{2}-x_{1}} & \innp{x_{t_{x}-1}-x_{1}, x_{3}-x_{1}} & \cdots & \innp{x_{t_{x}-1}-x_{1},x_{t_{x}}-x_{1}} \\
\innp{x_{t_{x}}-x_{1},x_{2}-x_{1}} & \innp{x_{t_{x}}-x_{1},x_{3}-x_{1}} & \cdots & \norm{x_{t_{x}}-x_{1}}^{2} \\
\end{pmatrix}.
\end{align*} 
\end{enumerate}
\end{example}

\begin{proof}
By \cref{thm:CCS:proper}\cref{thm:CCS:proper:prop}, $\CC{\mathcal{S}}$ is
proper. The rest follows from
\cref{fact:CircForTwoPoints} and \cref{fact:unique:LinIndpPformula}.
\end{proof}

We now turn to the properness of 
 $\CC{\widetilde{\mathcal{S}}}$ 
 when $\Id \in \widetilde{\mathcal{S}} \subseteq \aff \Omega$.

\begin{proposition} \label{prop:proper:affRU}
Let $\alpha \in \mathbb{R}$. Assume that 
\begin{align} \label{eq:prop:proper:affRU:assu}
\widetilde{\mathcal{S}} = \{\Id, (1-\alpha) \Id + \alpha \R_{U_{1}}, \ldots, (1-\alpha) \Id + \alpha \R_{U_{m}} \},
\end{align}
and  that 
\begin{align} \label{eq:prop:proper:affRU:S}
\mathcal{S} = \{\Id, \R_{U_{1}}, \ldots,  \R_{U_{m}}\}.
\end{align}
Then $\CC{\widetilde{\mathcal{S}}}$ is proper. Moreover,
\begin{align}  \label{eq:prop:proper:affRU:resul}
(\forall x \in \mathcal{H}) \quad  \CC{\widetilde{\mathcal{S}}}x =\alpha  \CC{\mathcal{S}}x  + (1 -  \alpha )x \in \mathcal{H}.
\end{align}
\end{proposition}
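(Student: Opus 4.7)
The plan is to use Proposition \ref{thm:CW:ExistWellDefined} directly: I exhibit a candidate point $p(x)=\alpha\CC{\mathcal{S}}x+(1-\alpha)x$ and verify that it lies in $\aff(\widetilde{\mathcal{S}}(x))$ and is equidistant from all elements of $\widetilde{\mathcal{S}}(x)$. Since $\Id\in\mathcal{S}\subseteq\Omega$ with $\mathcal{S}$ as in \eqref{eq:prop:proper:affRU:S}, \cref{thm:CCS:proper}\cref{thm:CCS:proper:prop} guarantees that $c:=\CC{\mathcal{S}}x$ exists in $\mathcal{H}$, so the candidate $p(x)$ is well-defined for every $x\in\mathcal{H}$.

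For the affine hull condition, I would write $T_{i}x:=(1-\alpha)\Id x+\alpha\R_{U_{i}}x$ and note $T_{i}x-x=\alpha(\R_{U_{i}}x-x)$. By \cref{fact:AffSubsExpre} and \cref{fact:AffineHull},
\[
\aff(\widetilde{\mathcal{S}}(x))=x+\spn\{\alpha(\R_{U_{i}}x-x):i=1,\ldots,m\}\subseteq x+\spn\{\R_{U_{i}}x-x:i=1,\ldots,m\}=\aff(\mathcal{S}(x)).
\]
Since $c\in\aff(\mathcal{S}(x))$, we have $c-x\in\spn\{\R_{U_{i}}x-x:i\}$, hence $p(x)-x=\alpha(c-x)\in\spn\{\alpha(\R_{U_{i}}x-x):i\}$, giving $p(x)\in\aff(\widetilde{\mathcal{S}}(x))$.

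For the equidistance condition, a direct computation yields $p(x)-x=\alpha(c-x)$ and $p(x)-T_{i}x=\alpha c+(1-\alpha)x-(1-\alpha)x-\alpha\R_{U_{i}}x=\alpha(c-\R_{U_{i}}x)$. Taking norms and using the defining property of $c=\CC{\mathcal{S}}x$ from \cref{thm:CCS:proper}\cref{thm:CCS:proper:prop}\cref{set:CW:ii} (which says $\|c-x\|=\|c-\R_{U_{i}}x\|$ for every $i$), I obtain
\[
\|p(x)-x\|=|\alpha|\,\|c-x\|=|\alpha|\,\|c-\R_{U_{i}}x\|=\|p(x)-T_{i}x\|
\]
for every $i\in\{1,\ldots,m\}$.

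Having verified both conditions, \cref{thm:CW:ExistWellDefined} immediately gives that $\CC{\widetilde{\mathcal{S}}}$ is proper and $\CC{\widetilde{\mathcal{S}}}x=p(x)=\alpha\CC{\mathcal{S}}x+(1-\alpha)x$, which is \eqref{eq:prop:proper:affRU:resul}. There is no real obstacle here, since the algebra is straightforward; the only minor point to be careful about is the degenerate case $\alpha=0$, where $\widetilde{\mathcal{S}}(x)=\{x\}$ trivially forces $\CC{\widetilde{\mathcal{S}}}x=x$, in agreement with the formula. The key conceptual observation is that the affine transformation $y\mapsto\alpha y+(1-\alpha)x$ carries $\mathcal{S}(x)$ onto $\widetilde{\mathcal{S}}(x)$ and scales all distances from the fixed ``basepoint'' $x$ by $|\alpha|$, so it carries the circumcenter of the first set (relative to its affine hull) to that of the second.
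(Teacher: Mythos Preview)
Your proof is correct, but it follows a different route from the paper's. The paper argues via the translation and scalar-multiple invariance of the circumcenter operator (\cref{fact:CircumSubaddi} and \cref{fact:CircumHomoge}): writing $\widetilde{\mathcal{S}}(x)=\alpha(\mathcal{S}(x)-x)+x$ as a set, it chains
\[
\CCO(\widetilde{\mathcal{S}}(x))=\CCO\big(\alpha(\mathcal{S}(x)-x)\big)+x=\alpha\,\CCO(\mathcal{S}(x)-x)+x=\alpha\,\CCO(\mathcal{S}(x))+(1-\alpha)x,
\]
handling $\alpha=0$ separately. Your approach instead exhibits the candidate $p(x)=\alpha\CC{\mathcal{S}}x+(1-\alpha)x$ and checks the two defining conditions of \cref{thm:CW:ExistWellDefined} directly, using only the affine-hull description from \cref{fact:AffineHull} and the equidistance property of $\CC{\mathcal{S}}x$. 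Your argument is more elementary and self-contained (it effectively re-derives the relevant invariance in this concrete setting), and has the minor advantage that the verification goes through uniformly for $\alpha=0$ without a genuine case split. The paper's route is shorter once \cref{fact:CircumHomoge} and \cref{fact:CircumSubaddi} are in hand and makes the underlying affine-transformation picture explicit at the level of the $\CCO$ operator rather than at the level of points.
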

\begin{proof}
If $\alpha=0$, then $\widetilde{\mathcal{S}} = \{\Id \}$, by \cref{def:cir:map},
\begin{align*}
(\forall x \in \mathcal{H}) \quad \CC{\widetilde{\mathcal{S}}}x =x =0 \CC{\mathcal{S}}x +(1-0)x \in \mathcal{H}.
\end{align*}
Now assume $\alpha \neq 0$. Let $x \in \mathcal{H}$. For every $i \in \{1, \ldots, m\}$, thus
\begin{align*}
\CC{\widetilde{\mathcal{S}}}x  & = \CCO ( \widetilde{\mathcal{S} }(x)) \quad (\text{by \cref{def:cir:map}})\\
& = \CCO { \Big ( \big \{ x, (1-\alpha)x + \alpha \R_{U_{1}} x,  \ldots, (1-\alpha) x + \alpha \R_{U_{m}} x \big\} \Big )} \quad (\text{by \cref{eq:prop:proper:affRU:assu}})\\
& = \CCO {\Big ( \big \{ 0, \alpha (\R_{U_{1}} x-x),  \ldots,  \alpha (\R_{U_{m}} x-x) \big\} +x \Big )} \\
& =  \CCO {\Big ( \big \{ 0, \alpha (\R_{U_{1}} x-x),  \ldots,  \alpha (\R_{U_{m}} x-x) \big\}  \Big )}+x \quad (\text{by \cref{fact:CircumSubaddi}})\\
& = \alpha \CCO {\Big ( \big \{ 0,  \R_{U_{1}} x-x,  \ldots,  \R_{U_{m}} x-x \big\}  \Big )}+x \quad (\text{by \cref{fact:CircumHomoge} and } \alpha \neq 0)\\
& = \alpha \CCO {\Big ( \big \{ x,  \R_{U_{1}} x,  \ldots,  \R_{U_{m}} x \big\} -x \Big )}+x\\
& = \alpha \CCO {\Big ( \big \{ x,  \R_{U_{1}} x,  \ldots,  \R_{U_{m}} x \big\}  \Big )}- \alpha x +x  \quad (\text{by \cref{fact:CircumSubaddi}}) \\
& = \alpha  \CCO {\big(\mathcal{S}(x) \big)} +(1- \alpha)x  \quad (\text{by \cref{eq:prop:proper:affRU:S} })  \\
& = \alpha  \CC{\mathcal{S}}x  + (1 -  \alpha )x \in \mathcal{H}. \quad (\text{by \cref{def:cir:map} and   \cref{thm:CCS:proper}\cref{thm:CCS:proper:prop}})
\end{align*}
The proof is complete.
\end{proof}

\begin{proposition} \label{prop:TT:CWhat}
Assume that $\mathcal{S}= \{ \Id , \R_{U_{2}}\R_{U_{1}},
\R_{U_{2}}\R_{U_{1}}\R_{U_{2}}\R_{U_{1}} \}$, set $T=\frac{\Id +
\R_{U_{2}}\R_{U_{1}}}{2}$, which is 
the Douglas--Rachford splitting operator,
and set $\widetilde{\mathcal{S}} =\{\Id, T, T^{2}\}$.
Then the
following hold:
\begin{enumerate}
\item \label{prop:TT:CWhat:AffS} $\aff \{\Id, T, T^{2}\} =\aff \mathcal{S}$.
\item \label{prop:TT:CWhat:Prop} $\CC{\widetilde{\mathcal{S}}} $ is proper.
\end{enumerate}
\end{proposition}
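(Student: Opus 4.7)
The plan is to handle part~\cref{prop:TT:CWhat:AffS} by direct computation in the operator ring and then leverage the resulting equality of affine hulls, evaluated pointwise, to transfer the properness of $\CC{\mathcal{S}}$ (available via \cref{thm:CCS:proper} or \cref{ex:m:mm:R}) to $\CC{\widetilde{\mathcal{S}}}$. Squaring $T = \tfrac{1}{2}\Id + \tfrac{1}{2}\R_{U_{2}}\R_{U_{1}}$ gives
\[
T^{2} = \tfrac{1}{4}\Id + \tfrac{1}{2}\R_{U_{2}}\R_{U_{1}} + \tfrac{1}{4}\R_{U_{2}}\R_{U_{1}}\R_{U_{2}}\R_{U_{1}},
\]
an affine combination of elements of $\mathcal{S}$ (the coefficients sum to $1$). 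Together with $T\in\aff\mathcal{S}$, this gives $\aff\{\Id,T,T^{2}\}\subseteq\aff\mathcal{S}$. Inverting this system yields $\R_{U_{2}}\R_{U_{1}} = 2T-\Id$ and $\R_{U_{2}}\R_{U_{1}}\R_{U_{2}}\R_{U_{1}} = 4T^{2}-4T+\Id$, each an affine combination of $\Id, T, T^{2}$, whence the reverse inclusion holds.

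For \cref{prop:TT:CWhat:Prop}, the key observation is that the same identities, evaluated at a point, deliver the pointwise equality $\aff\widetilde{\mathcal{S}}(x) = \aff\mathcal{S}(x)$ for every $x\in\mathcal{H}$. The plan is to split on $\card\widetilde{\mathcal{S}}(x)$. The cases $\card\widetilde{\mathcal{S}}(x)\in\{1,2\}$ are immediate from \cref{cor:x1x2x3Dom} (or \cref{fact:CircForTwoPoints}), so the substance lies in the case $\card\widetilde{\mathcal{S}}(x)=3$.

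In that case, write $\phi := \R_{U_{2}}\R_{U_{1}}$, $y := \phi x$, and $z := \phi^{2} x$. I would first argue that $\card\mathcal{S}(x) = \card\{x,y,z\} = 3$ by ruling out each possible collapse: $y=x$ forces $Tx=x$; $z=x$ with $y\neq x$ gives $T^{2}x=(x+y)/2=Tx$; and $z=y$ with $y\neq x$ yields $\phi^{2}x=\phi x$, which by injectivity of the affine isometry $\phi$ forces $\phi x=x$, contradicting $y\neq x$. Each of these alternatives contradicts $\card\widetilde{\mathcal{S}}(x)=3$. This case analysis---in particular the use of injectivity of $\phi$ to rule out $z=y$---is the main obstacle of the proof. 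Once $\card\mathcal{S}(x)=3$ is secured, \cref{lem:EquDistS} places the three points $x,y,z$ on a common sphere centred at any $u\in\bigcap_{i}U_{i}$, so they cannot be collinear and thus $\dim\aff\mathcal{S}(x)=2$. By the pointwise version of \cref{prop:TT:CWhat:AffS}, $\dim\aff\widetilde{\mathcal{S}}(x)=2$ as well, whence the three distinct points $x, Tx, T^{2}x$ spanning a two-dimensional affine subspace must be affinely independent; \cref{fact:unique:LinIndpPformula} then gives $\CC{\widetilde{\mathcal{S}}}x\in\mathcal{H}$, completing the proof of properness.
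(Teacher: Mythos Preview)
Your proof is correct and close in spirit to the paper's, but organized differently for part~\cref{prop:TT:CWhat:Prop}. The paper splits on whether $x,Tx,T^{2}x$ are affinely independent: if dependent, the invertible change of basis from part~\cref{prop:TT:CWhat:AffS} forces $x,\phi x,\phi^{2}x$ to be affinely dependent too, and then properness of $\CC{\mathcal{S}}$ (\cref{thm:CCS:proper}) together with \cref{fact:clform:three} yields $\card\mathcal{S}(x)\leq 2$; the same three collapse cases you handle (including $\phi^{2}x=\phi x\Rightarrow\phi x=x$ via multiplying by $\R_{U_{1}}\R_{U_{2}}$, i.e.\ injectivity) then pin down $\widetilde{\mathcal{S}}(x)$ to at most two points, and \cref{prop:form:m2:Oper} finishes. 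You instead split on $\card\widetilde{\mathcal{S}}(x)$ and, in the nontrivial case, argue in the forward direction that $\card\mathcal{S}(x)=3$, then invoke \cref{lem:EquDistS} directly to place $x,\phi x,\phi^{2}x$ on a sphere and conclude affine independence geometrically. Your route is slightly more self-contained (it bypasses the packaged statement of \cref{thm:CCS:proper} and \cref{fact:clform:three} in favor of the underlying equidistance lemma), while the paper's route more explicitly exhibits $\CC{\widetilde{\mathcal{S}}}x$ as a midpoint in the degenerate cases; both rest on the same invertible affine change from part~\cref{prop:TT:CWhat:AffS} and the same injectivity observation for $\phi$.
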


\begin{proof}
\cref{prop:TT:CWhat:AffS}: By \cref{fact:AffineHull}, 
\begin{align}
& \aff \{ \Id, T, T^{2}\}  =\aff (\mathcal{S}) \nonumber \\
\Longleftrightarrow & \Id + \spn\{T-\Id, T^{2}-\Id \} =\Id + \spn\{\R_{U_{2}}\R_{U_{1}}- \Id, \R_{U_{2}}\R_{U_{1}}\R_{U_{2}}\R_{U_{1}} - \Id\}. \label{eq:prop:TT:CWhat:affspn}
\end{align}
On the other hand,
\begin{align} \label{eq:TT:CWhat:1}
T-\Id = \frac{\R_{U_{2}}\R_{U_{1}}+\Id}{2}-\Id =\frac{\R_{U_{2}}\R_{U_{1}}-\Id}{2},
\end{align}
and
\begin{align} \label{eq:TT:CWhat:2}
T^{2} - \Id & = T^{2}- T +T -\Id= (T-\Id)T + (T-\Id) \nonumber \\
& = \frac{\R_{U_{2}}\R_{U_{1}}-\Id}{2} \Big(\frac{\R_{U_{2}}\R_{U_{1}}+\Id}{2} \Big)+\frac{\R_{U_{2}}\R_{U_{1}} - \Id}{2}\nonumber  \\
& = \frac{1}{4}(\R_{U_{2}}\R_{U_{1}}\R_{U_{2}}\R_{U_{1}}- \Id)+\frac{1}{2}(\R_{U_{2}}\R_{U_{1}}- \Id),
\end{align}
which result in 
\begin{align} \label{eq:TwoVectNonsingMatrix}
\begin{pmatrix} T - \Id &T^{2}-\Id \end{pmatrix} = \begin{pmatrix} \R_{U_{2}}\R_{U_{1}} - \Id &\R_{U_{2}}\R_{U_{1}}\R_{U_{2}}\R_{U_{1}}-\Id \end{pmatrix}\begin{pmatrix} \frac{1}{2} &\frac{1}{2} \\ 0 & \frac{1}{4} \end{pmatrix}.
\end{align}
Set $A= \begin{pmatrix} \frac{1}{2} &\frac{1}{2} \\ 0 & \frac{1}{4}
\end{pmatrix}$. Since $\det(A) =\frac{1}{8} \neq 0$,
\cref{eq:TwoVectNonsingMatrix} yields 
\begin{align} \label{eq:prop:TT:CWhat:spn}
\spn\{T-\Id, T^{2}-\Id \} = \spn\{\R_{U_{2}}\R_{U_{1}}- \Id, \R_{U_{2}}\R_{U_{1}}\R_{U_{2}}\R_{U_{1}} - \Id\}.
\end{align}
Altogether, \cref{eq:prop:TT:CWhat:spn} and \cref{eq:prop:TT:CWhat:affspn}
demonstrate to us that \cref{prop:TT:CWhat:AffS} is true.

\cref{prop:TT:CWhat:Prop}:
If $x, Tx, T^{2}x$ are affinely independent, by \cref{fact:unique:LinIndpPformula}, then $\CC{\widetilde{\mathcal{S}}}x \in \mathcal{H}$. Suppose $x, Tx, T^{2}x$ are affinely dependent. By \cref{eq:TwoVectNonsingMatrix} and $\det (A) \neq 0$, in this case, $x, \R_{U_{2}}\R_{U_{1}}x,  \R_{U_{2}}\R_{U_{1}}\R_{U_{2}}\R_{U_{1}}x$ are affinely dependent. Applying \cref{thm:CCS:proper}\cref{thm:CCS:proper:prop}, we know $\CC{\mathcal{S}}x \in \mathcal{H}$. Hence, \cref{fact:clform:three} yields that
\begin{align} \label{eq:prop:TT:CWhat:cardSx}
\card \Big(\{ x , \R_{U_{2}}\R_{U_{1}},x \R_{U_{2}}\R_{U_{1}}\R_{U_{2}}\R_{U_{1}}x \}  \Big) = \card \big(\mathcal{S}(x) \big) \leq 2. 
\end{align}
If $Tx-x=0$, by \cref{prop:form:m2:Oper}, $\CC{\widetilde{\mathcal{S}}} x=\frac{x+T^{2}x}{2}$. Now suppose $Tx-x \neq 0$. By \cref{eq:TT:CWhat:1}, $\R_{U_{2}}\R_{U_{1}}x \neq x$. Therefore, by \cref{eq:prop:TT:CWhat:cardSx} and $\R_{U_{2}}\R_{U_{1}}x \neq x$, either $\R_{U_{2}}\R_{U_{1}}\R_{U_{2}}\R_{U_{1}}x=\R_{U_{2}}\R_{U_{1}}x$ or $\R_{U_{2}}\R_{U_{1}}\R_{U_{2}}\R_{U_{1}}x=x$. Suppose $\R_{U_{2}}\R_{U_{1}}\R_{U_{2}}\R_{U_{1}}x=\R_{U_{2}}\R_{U_{1}}x$. Multiply both sides by $\R_{U_{1}}\R_{U_{2}}$, by \cref{lem:PU:RUIdempotent}\cref{lem:RUIdempotent}, to deduce
$\R_{U_{2}}\R_{U_{1}}x=x$,
which contradicts with $\R_{U_{2}}\R_{U_{1}}x$ $\neq$ $x$. 
Suppose $\R_{U_{2}}\R_{U_{1}}\R_{U_{2}}\R_{U_{1}}x$ $=$ $x$, by \cref{eq:TT:CWhat:1} and \cref{eq:TT:CWhat:2}, which implies,  $Tx=T^{2}x$.
Then by \cref{prop:form:m2:Oper}, we obtain $ \CC{\widetilde{\mathcal{S}}}x=\frac{x+Tx}{2} \in \mathcal{H}$.

In conclusion, $(\forall x \in \mathcal{H})$ $ \CC{\widetilde{\mathcal{S}}}x \in \mathcal{H}$, which means \cref{prop:TT:CWhat:Prop} holds.
\end{proof}

%%%%%-----------------------Improper circumcenter mappings induced by reflectors-----------------------%%%%%

\subsection{Improper circumcenter mappings induced by reflectors} \label{subsec:ImpcircuMappingRefle}
Propositions \ref{prop:proper:affRU} and \ref{prop:TT:CWhat} naturally prompt
the following question: Is $\CC{\widetilde{\mathcal{S}}}$ proper for every
$\widetilde{S}$ with $\Id \in \widetilde{S} \subseteq \aff \Omega$ ?
The following examples provide negative answers.

\begin{example} \label{exam:IMp:affRU}
Assume that $m=2$, that $ U:=U_{1} = U_{2} \subsetneqq \mathcal{H}$,
 and that $\{ \alpha_{1}, \alpha_{2} \} \subseteq \mathbb{R}$. Assume further that $\widetilde{\mathcal{S}} =\{\Id, (1-\alpha_{1})\Id + \alpha_{1} \R_{U}, (1- \alpha_{2})\Id + \alpha_{2} \R_{U}\}$. Then $\CC{\widetilde{\mathcal{S}}}$ is improper if and only if $\alpha_{1} \neq 0, \alpha_{2} \neq 0$ and $\alpha_{2} \neq \alpha_{1}$. 
\end{example}

\begin{proof}
By \cref{prop:form:m2:Oper}, when $\alpha_{1} =0$ or $\alpha_{2} =0$, then
$\CC{\widetilde{\mathcal{S}}}$ is proper.

For every  $x \in \mathcal{H}$, if $\alpha_{1} \neq 0$,
\begin{align*}
\big( (1-\alpha_{2})x + \alpha_{2} \R_{U}x \big )-x = \alpha_{2} (\R_{U}x -x) =\frac{\alpha_{2}}{\alpha_{1}} \alpha_{1} (\R_{U}x -x) = \frac{\alpha_{2}}{\alpha_{1}}  \Big( \big( (1-\alpha_{1})x + \alpha_{1} \R_{U}x \big )-x \Big),
\end{align*}
which implies that, by \cref{fac:AffinIndeLineInd}, 
\begin{align} \label{eq:exam:IMp:affRU:affind}
x, (1-\alpha_{1})x + \alpha_{1} \R_{U}x, (1- \alpha_{2})x + \alpha_{2} \R_{U}x ~\text{ are affinely dependent}.
\end{align}

On the other hand, if $x \in \mathcal{H} \smallsetminus U$, then since $\alpha_{1} \neq 0$, $\alpha_{2} \neq 0$, $\alpha_{1} \neq \alpha_{2}$ and $\Fix \R_{U} =U$, we obtain that 
\begin{subequations}  \label{eq:exam:IMp:affRU:card}
\begin{align}
& \quad \quad \card \big \{x, (1-\alpha_{1})x + \alpha_{1} \R_{U}x, (1- \alpha_{2})x + \alpha_{2} \R_{U}x \big\} =3 \\
& \Longleftrightarrow \alpha_{1} \neq 0, \alpha_{2} \neq 0 ~~\text{and}~~\alpha_{2} \neq \alpha_{1}. 
\end{align} 
\end{subequations}
Combining \cref{cor:T1T2T3Dom} with \cref{eq:exam:IMp:affRU:affind} and \cref{eq:exam:IMp:affRU:card}, we deduce the required result. 
\end{proof}

\begin{example} \label{exam::IMp:affRURU}
Assume that $m=2$, that 
$ U:=U_{1} = U_{2} \subsetneqq \mathcal{H}$, 
and that $\{\alpha_{1}, \alpha_{2} \} \subseteq \mathbb{R}$. Assume further
that $\widetilde{\mathcal{S}} = \{ \Id, (1-\alpha_{1}) \Id +\alpha_{1}\R_{U},
\big( (1-\alpha_{2})\Id +\alpha_{2} \R_{U} \big) \circ \big((1-\alpha_{1})
\Id +\alpha_{1}\R_{U} \big)\}$. Then $\CC{\widetilde{\mathcal{S}}}$ is
improper if and only if $\alpha_{1} \neq 0, \alpha_{1} \neq \frac{1}{2},
\alpha_{2} \neq 0$ and $\alpha_{2} \neq \frac{\alpha_{1}}{2 \alpha_{1}-1}$.
\end{example}
\begin{proof}
By \cref{prop:form:m2:Oper}, when $\alpha_{1} =0$ or $\alpha_{2} =0$, then $\CC{\widetilde{\mathcal{S}}}$ is proper.

Note that 
\begin{subequations} \label{eq:exam::IMp:affRURU:thirditem}
\begin{align} 
& \quad~ \big( (1-\alpha_{2})\Id +\alpha_{2} \R_{U} \big) \circ \big((1-\alpha_{1}) \Id +\alpha_{1}\R_{U} \big) \\
& =(1- \alpha_{1}-\alpha_{2}+\alpha_{2}\alpha_{1} )\Id + (\alpha_{1} -\alpha_{2}\alpha_{1})\R_{U} +(\alpha_{2} -\alpha_{2}\alpha_{1})\R_{U} +\alpha_{2}\alpha_{1}\R_{U}\R_{U} \\
& =(1- \alpha_{1}-\alpha_{2}+2 \alpha_{2}\alpha_{1}  ) \Id + (\alpha_{1}+\alpha_{2}-2 \alpha_{2}\alpha_{1}) \R_{U} \quad (\text{by \cref{lem:PU:RUIdempotent}\cref{lem:RUIdempotent}})
\end{align}
\end{subequations}
For every $x \in \mathcal{H}$, if $\alpha_{1} \neq 0$,
\begin{align*}
& \quad~\Big( \big( (1-\alpha_{2})\Id +\alpha_{2} \R_{U} \big) \circ \big((1-\alpha_{1}) \Id +\alpha_{1}\R_{U} \big)x \Big) -x \\
& =   (\alpha_{1}+\alpha_{2}-2 \alpha_{2}\alpha_{1}) ( \R_{U}x-x ) \quad (\text{by \cref{eq:exam::IMp:affRURU:thirditem}}) \\
& =  \frac{\alpha_{1}+\alpha_{2}-2 \alpha_{2}\alpha_{1}}{ \alpha_{1}} \alpha_{1} ( \R_{U}x-x ) \\
& =  \frac{\alpha_{1}+\alpha_{2}-2 \alpha_{2}\alpha_{1}}{ \alpha_{1}} \Big( \big( (1-\alpha_{1}) x+\alpha_{1}\R_{U}x \big) - x \Big),
\end{align*}
which implies, by \cref{fac:AffinIndeLineInd}, that
\begin{align}  \label{eq:exam::IMp:affRURU:AffiInd}
x, (1-\alpha_{1}) x +\alpha_{1}\R_{U}x, \big( (1-\alpha_{2})\Id +\alpha_{2} \R_{U} \big) \circ \big((1-\alpha_{1}) \Id +\alpha_{1}\R_{U} \big)x \text{ are affinely dependent.}
\end{align}
On the other hand, assume now $x \in \mathcal{H} \smallsetminus U$. Then
\begin{subequations} \label{eq:exam::IMp:affRURU:card}
\begin{align} 
& \quad \quad \card \Big\{x, (1-\alpha_{1}) x +\alpha_{1}\R_{U}x, \big( (1-\alpha_{2})\Id +\alpha_{2} \R_{U} \big) \circ \big((1-\alpha_{1}) \Id +\alpha_{1}\R_{U} \big)x  \Big\} =3 \\
& \Longleftrightarrow  \alpha_{1} \neq 0, \alpha_{1} \neq \frac{1}{2}, \alpha_{2} \neq 0 ~~\text{and}~~\alpha_{2} \neq \frac{\alpha_{1}}{2 \alpha_{1}-1}.
\end{align}
\end{subequations}
Combining \cref{cor:T1T2T3Dom} with \cref{eq:exam::IMp:affRURU:AffiInd} and
\cref{eq:exam::IMp:affRURU:card}, we infer the desired result.
\end{proof}

The following example is a special case of \cref{exam::IMp:affRURU}. 
\begin{example} \label{exam:Count:prop:TT:CWhat}
Assume that $m=2$, that $U_{1} \varsubsetneqq U_{2} = \mathcal{H}$, 
and that $\{\alpha_{1}, \alpha_{2} \} \subseteq \mathbb{R}$.  
Assume further that $\widetilde{\mathcal{S}} = \{ \Id, (1-\alpha_{1}) \Id
+\alpha_{1}\R_{U_{2}}\R_{U_{1}}, \big( (1-\alpha_{2})\Id +\alpha_{2}
\R_{U_{2}}\R_{U_{1}} \big) \circ \big((1-\alpha_{1}) \Id
+\alpha_{1}\R_{U_{2}}\R_{U_{1}} \big)\}$. Then $\CC{\widetilde{\mathcal{S}}}$
is improper if and only if $\alpha_{1} \neq 0, \alpha_{1} \neq \frac{1}{2},
\alpha_{2} \neq 0$ and $\alpha_{2} \neq \frac{\alpha_{1}}{2 \alpha_{1}-1}$.
\end{example}

\begin{proof}
Since $\R_{U_{2}} = \R_{\mathcal{H}} =\Id$, we deduce that 
\begin{align*}
\widetilde{\mathcal{S}} = \{ \Id, (1-\alpha_{1}) \Id +\alpha_{1}\R_{U_{1}}, \big( (1-\alpha_{2})\Id +\alpha_{2} \R_{U_{1}} \big) \circ \big((1-\alpha_{1}) \Id +\alpha_{1}\R_{U_{1}} \big)\}.
\end{align*}
The desired result follows directly from \cref{exam::IMp:affRURU}.
\end{proof}
Notice that in  \cref{prop:TT:CWhat} we showed that for $\widetilde{\mathcal{S}} =\{\Id, T, T^{2}\} = \{\Id,\frac{\Id + \R_{U_{2}}\R_{U_{1}}}{2}, \frac{\Id + \R_{U_{2}}\R_{U_{1}}}{2} \circ \frac{\Id + \R_{U_{2}}\R_{U_{1}}}{2} \}$,  $\CC{\widetilde{\mathcal{S}}}$ is proper. The example above says that this result is not a conincidence.

%%%%%%%%%%%%%%%%%%%%Two special circumcenter mappings induced by reflectors%%%%%%%%%%%

\subsection{Particular circumcenter mappings in finite-dimensional spaces}
\subsubsection{Application to best approximation} 
Suppose that 
\begin{align*}
\mathcal{S}_{1}=\{\Id, \R_{U_{1}}, \R_{U_{2}}\} \quad \text{and} \quad \mathcal{S}_{2} = \{\Id, \R_{U_{1}}, \R_{U_{2}}\R_{U_{1}}\}.
\end{align*}
By \cref{ex:m:m} and \cref{ex:2:3}, we know $\CC{S_{1}}$ and $\CC{S_{2}}$ are
proper. Hence, for every $x \in \mathcal{H}$, we are able to generate
iterations $(\CC{S_{1}}^{k}x)_{k\in \mathbb{N}}$ and $(\CC{S_{2}}^{k}x)_{k\in
\mathbb{N}}$. In the following two examples, we choose two linear subspaces,
$U_{1}$ and $U_{2}$, in $\mathbb{R}^{3}$ and one point $x_{0} \in
\mathbb{R}^{3}$. Then we count the iteration numbers needed for the four
algorithms: the shadow sequence of the Douglas--Rachford method (DRM) 
(see, \cite{BCNPW2014} for details), the sequence generated by the method
of alternating projections (MAP), and the sequence generated by iterating $\CC{S_{1}}$
and $\CC{S_{2}}$ to find the best approximation point $\overline{x} = \Pro_{U_{1}
\cap U_{2}}x_{0}$.
\begin{example} \label{exam:3D:LinePlane}
Assume that $\mathcal{H} =\mathbb{R}^{3}$, that $U_{1}$ is the line passing 
through the points $(0,0,0)$ and $(1,0,0)$, and that $U_{2}$ is the plane $\{(x,y,z)
~|~ x+y+z=0\}$. Let $x_{0} = (0.5,0,0)$. As \cref{table:Lineandplane} shows, both
of the $\CC{S_{1}}$ and $\CC{S_{2}}$ are faster than DRM and MAP. 
(The results were obtained using \texttt{GeoGebra}.)
\begin{table}[H]
\centering
\begin{tabu} to 0.8\textwidth { m{7cm}  m{6cm} }
\hline
Algorithm & Iterations needed to find $\Pro_{U_{1} \cap U_{2}}x_{0}$ \\
\hline
Douglas--Rachford method  & 12\\
Method of alternating projections & 12 \\
Circumcenter method induced by $\mathcal{S}_{1}$ & 1 \\
Circumcenter method induced by $\mathcal{S}_{2}$ & 1 \\
\hline
\end{tabu}
\caption{Iterations needed for each algorithm. 
See \cref{exam:3D:LinePlane} for details.}
\label{table:Lineandplane}
\end{table}
\end{example}
\begin{figure}[H] 
\begin{center} \includegraphics[scale=1,trim={0 3cm 0 0},clip]{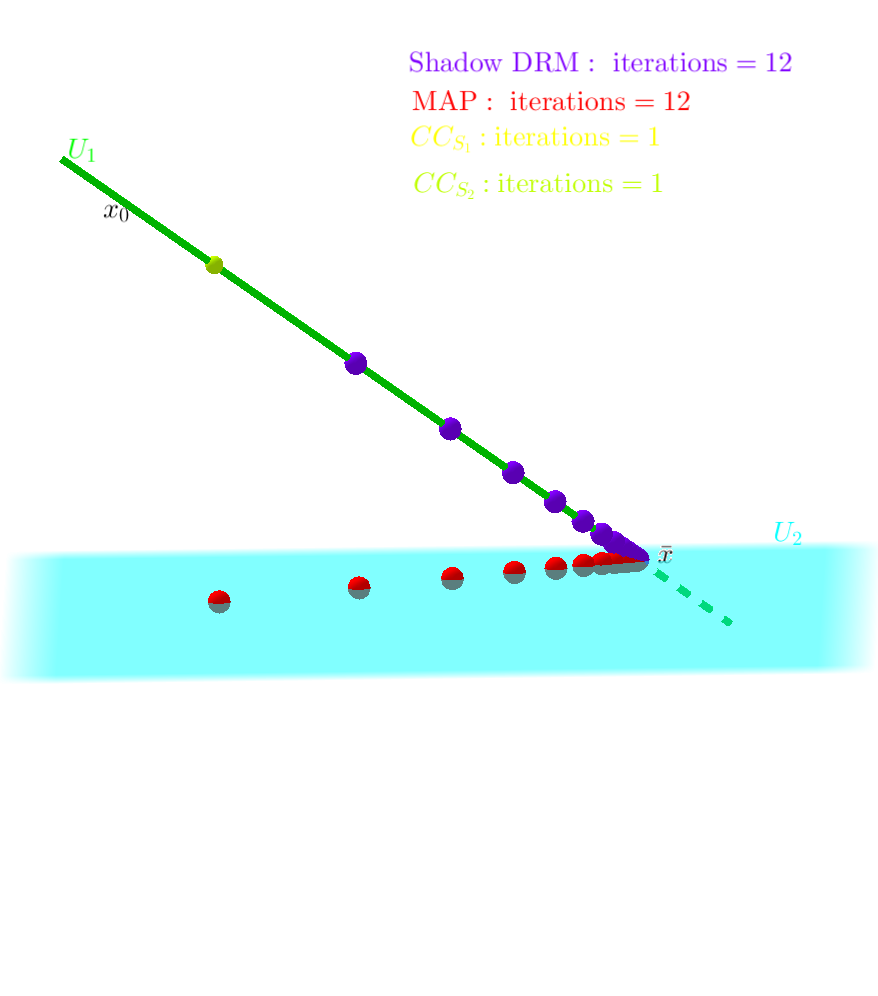}
\end{center}
\caption{\cref{exam:3D:LinePlane} compares iterations 
for a line and a plane.} \label{3DLINE_PLANE}
\end{figure}

\begin{example} \label{exam:3D:PlanePlane}
Assume that $\mathcal{H} =\mathbb{R}^{3}$, 
that $U_{1}=\{(x,y,z) ~|~ x+y+z=0\}$, and that $U_{2}:=\{(x,y,z) ~|~
-x+2y+2z=0\}$. Set $x_{0} = (-1,0.5,0.5)$. As \cref{table:planeandplane}
illustrates, $\CC{S_{2}}$ is faster than the other methods, and
$\CC{S_{1}}$ performs no worse than DRM or MAP.  
(The results were obtained using \texttt{GeoGebra}.)
\begin{table}[H]
\centering
\begin{tabu} to 0.8\textwidth { m{7cm}  m{6cm} }
\hline
Algorithm & Iterations needed to find $\Pro_{U_{1} \cap U_{2}}x_{0}$ \\
\hline
Douglas--Rachford method  & 5\\
Method of alternating projections & 6 \\
Circumcenter method induced by $\mathcal{S}_{1}$ & 5 \\
Circumcenter method induced by $\mathcal{S}_{2}$ & 2 \\
\hline
\end{tabu}
\caption{Iterations needed for each algorithm. 
See \cref{exam:3D:PlanePlane} for details.}
\label{table:planeandplane}
\end{table}
\end{example}
\begin{figure}[H] 
\begin{center} 
\includegraphics[scale=0.5,trim={0 3cm 0 0},clip]{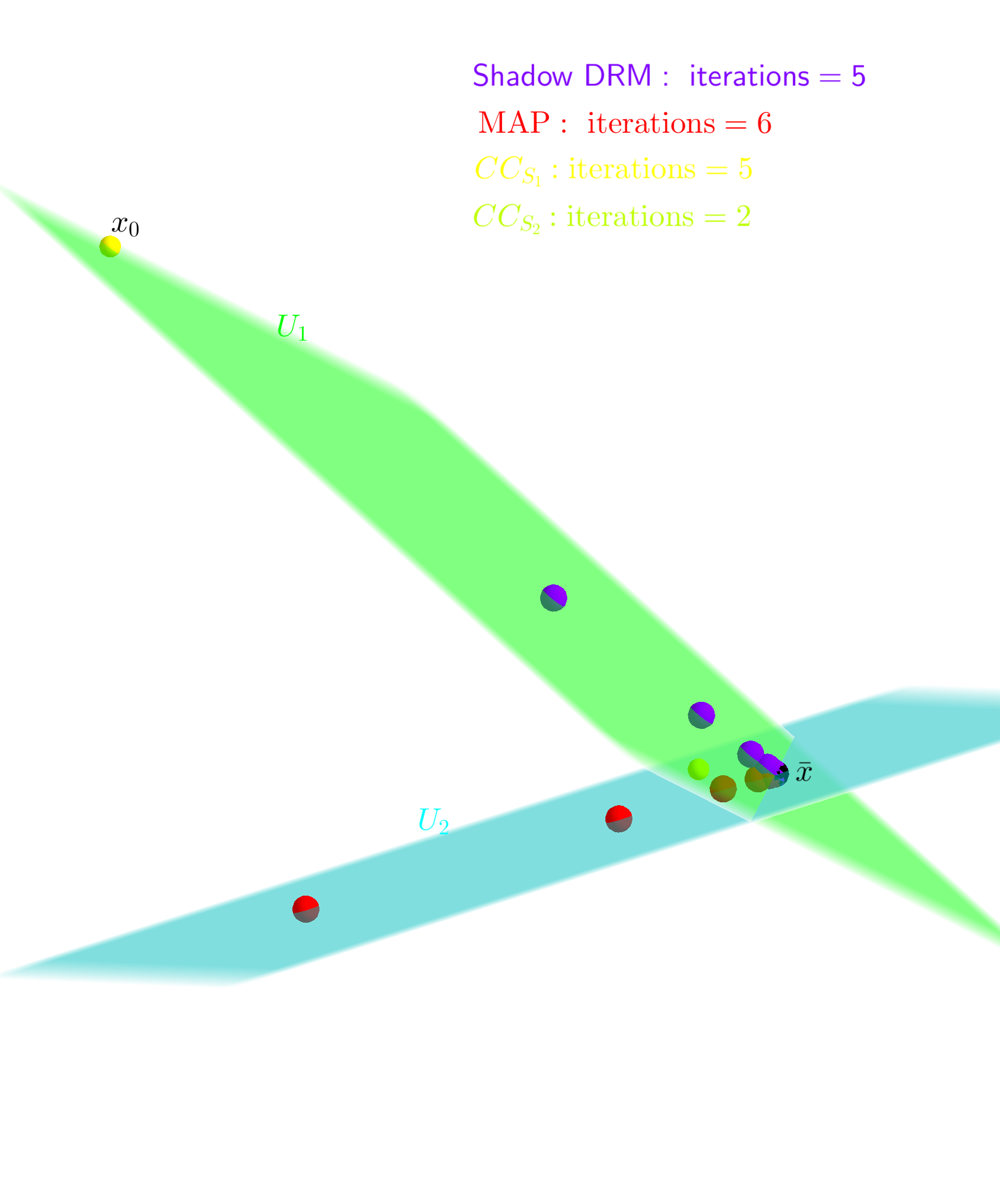}
\end{center}
\caption{\cref{exam:3D:PlanePlane} compares iterations for two planes.}
\label{3DPLANE_PLANE.png}
\end{figure}

\subsubsection{Counterexamples}

The following two examples show that the circumcenter mapping induced by
reflectors is in general neither linear nor continuous.
\begin{example}[Discontinuity] \label{exam:discontinuity}
Suppose that $\HH=\mathbb{R}^2$, set $U_{1}=\mathbb{R}\cdot (1,0)$, 
and set $U_{2}:=\mathbb{R}\cdot(1,1)$. Suppose that 
$\mathcal{S}=\{\Id, \R_{U_{1}}, \R_{U_{2}}\}$ or that $\mathcal{S}=\{\Id, \R_{U_{1}}, \R_{U_{2}}\R_{U_{1}}\}$. Let
$\overline{x} =(1,0)$ and let $(\forall k \in \mathbb{N})$
$x_{k}=(1,\frac{1}{k+1})$. As \cref{fig: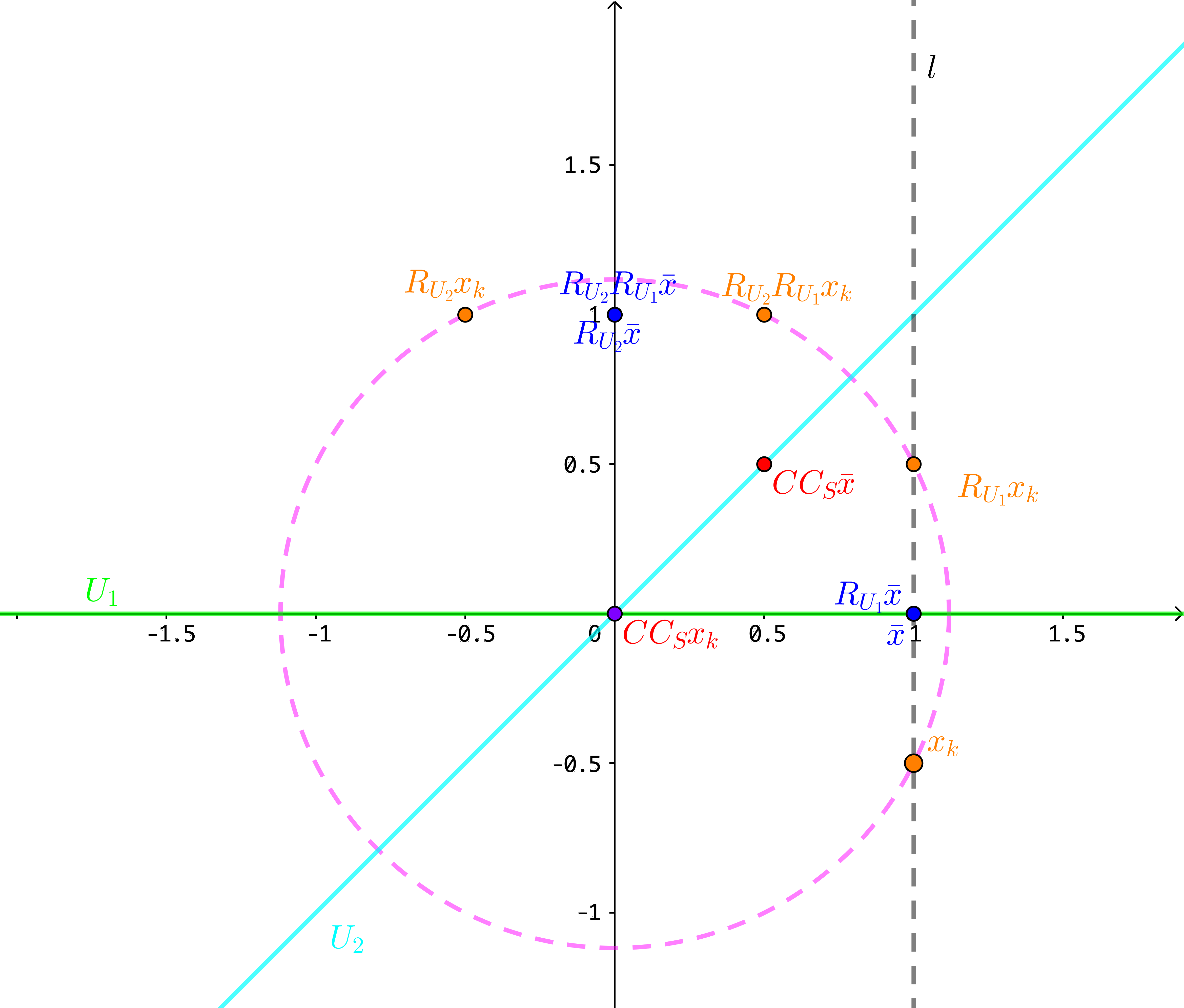} illustrates,
$\CC{\mathcal{S}}\overline{x} = \big(\frac{1}{2},\frac{1}{2}\big)$ and
$(\forall k \in \mathbb{N}) $ $\CC{\mathcal{S}}x_{k} = (0,0)$. Hence,
\begin{align*} 
  \lim_{k \to \infty} \CC{\mathcal{S}}x_{k} = (0,0) \neq
\big(\tfrac{1}{2},\tfrac{1}{2}\big)= \CC{\mathcal{S}}\overline{x}, 
\end{align*}
which implies that $\CC{\mathcal{S}}$ is not continuous at $\overline{x}$. 
By \cref{prop:FiniteIdresult}, the demiclosedness principle holds for 
$\CC{\mathcal{S}}$. 
\begin{figure}[H] 
\begin{center}\includegraphics[scale=0.18]{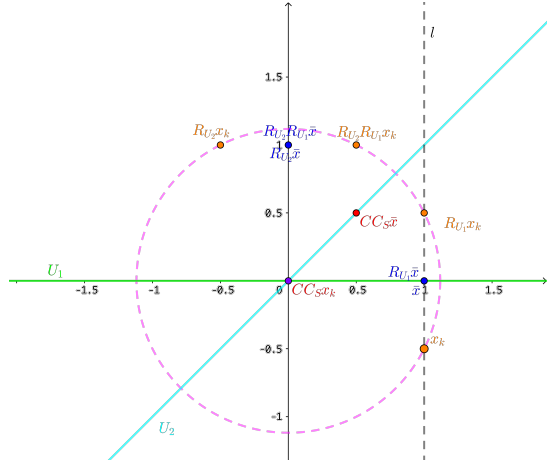}
\end{center}
\caption{\cref{exam:discontinuity} provides a discontinuous 
$\CC{\mathcal{S}}$ in $\mathbb{R}^{2}$.}  \label{fig:CCSDiscontinuous.png}
\end{figure}
\end{example}

\begin{example}[Nonlinearity]  \label{exam:not:linear}
Suppose that $\HH=\mathbb{R}^2$, 
set $U_{1}=\mathbb{R}\cdot (1,0)$ and set $U_{2}=\mathbb{R}\cdot(1,1)$. 
Suppose that $\mathcal{S}=\{\Id, \R_{U_{1}}, \R_{U_{2}}\}$ or that $\mathcal{S}=\{\Id,
\R_{U_{1}}, \R_{U_{2}}\R_{U_{1}}\}$. Let $x =(1,0)$ and $y=(1,-1)$. As \cref{fig: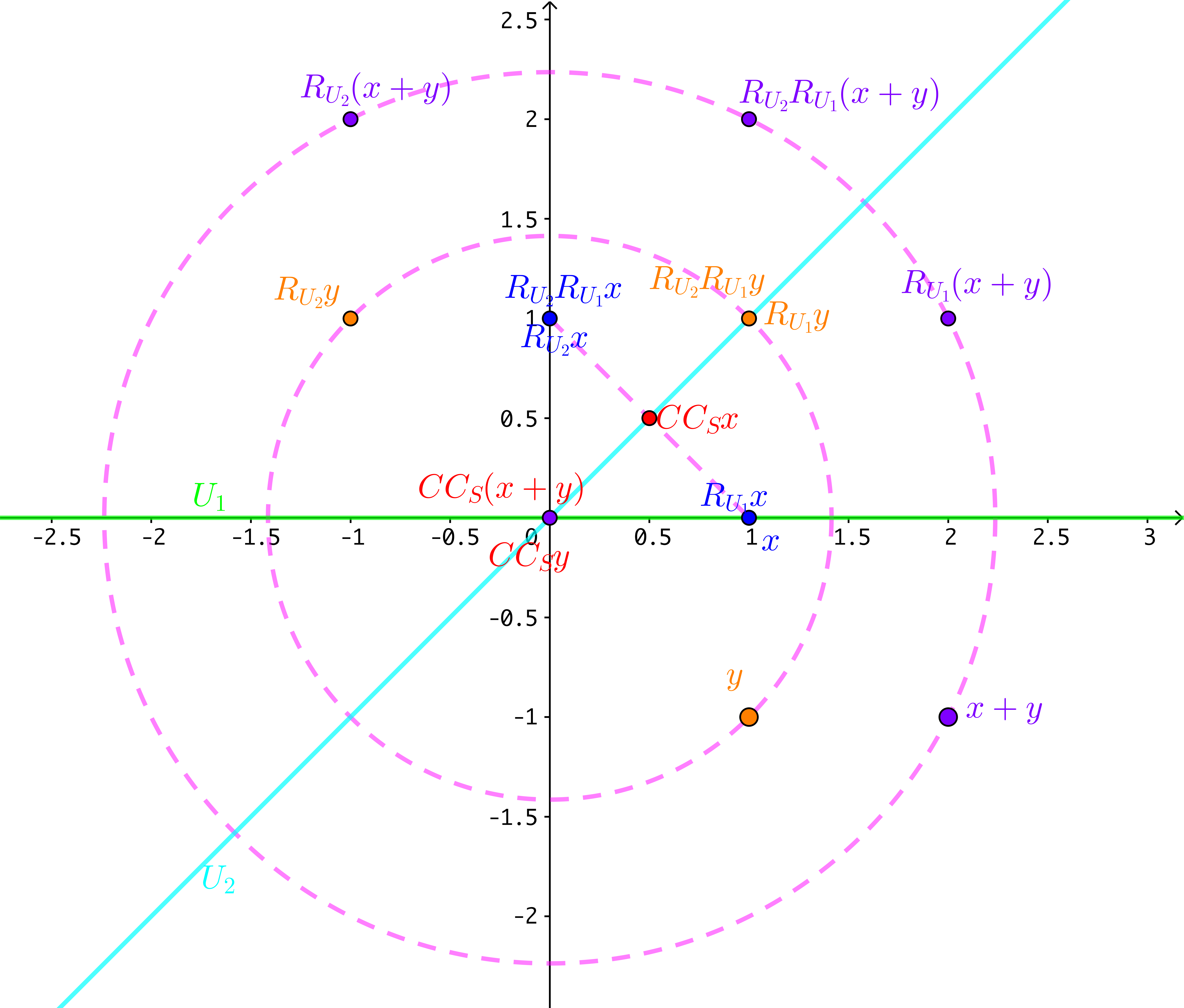} illustrates,
\begin{align*}
\CC{\mathcal{S}}x + \CC{\mathcal{S}}y =\big(\tfrac{1}{2},\tfrac{1}{2} \big) +
(0,0) \neq (0,0) = \CC{\mathcal{S}}(x+y),
\end{align*}
which shows that $\CC{\mathcal{S}}$ is not linear. 
By \cref{prop:FiniteIdresult}, the demiclosedness principle holds for 
$\CC{\mathcal{S}}$. 

\begin{figure}[H] 
\begin{center}\includegraphics[scale=0.11]{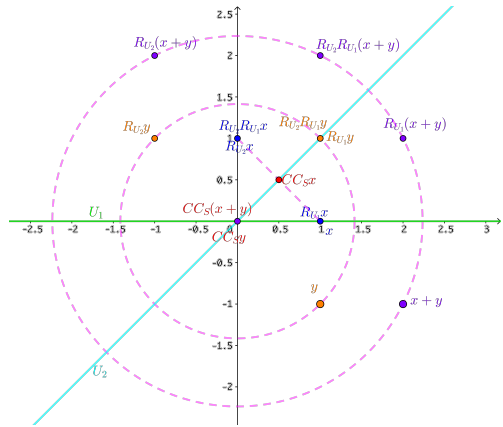}
\end{center}
\caption{\cref{exam:not:linear} presents a nonlinear
$\CC{\mathcal{S}}$ in $\mathbb{R}^{2}$} \label{fig:Not_linear.png} \label{Not linear.png}
\end{figure}
\end{example}

%---------------------------------------------Circumcenter mapping induced by projectors---------------------------------------------
\section{Circumcenter mappings induced by projectors} \label{sec:CircumMappingProjectots}

In this section, we uphold the notations that
\begin{empheq}[box=\mybluebox]{equation*}
\Omega = \Big\{ \R_{U_{i_{r}}}\cdots \R_{U_{i_{2}}}\R_{U_{i_{1}}}  ~\Big|~ r \in \mathbb{N}, ~\mbox{and}~ i_{1}, \ldots,  i_{r} \in \{1, \ldots,m\}    \Big\} \quad \text{and} \quad  \Id \in   \mathcal{S} \subseteq \Omega.
\end{empheq}
In addition, set 
\begin{empheq}[box=\mybluebox]{equation*}
\Theta = \Big\{ \Pro_{U_{i_{r}}}\cdots \Pro_{U_{i_{2}}}\Pro_{U_{i_{1}}}  ~\Big|~ r \in \mathbb{N}, ~\mbox{and}~ i_{1}, \ldots,  i_{r} \in \{1, \ldots,m\}    \Big\}.
\end{empheq}
By the empty product convention,  $\prod^{0}_{j=1}\Pro_{U_{i_{j}}} =\Id$. Hence $\Id \in \Theta $.
Specifically, we assume that
\begin{empheq}[box=\mybluebox]{equation*}
 \Id \in  \widehat{\mathcal{S}} \subseteq \aff \Theta.
\end{empheq}

\subsection{Proper circumcenter mappings induced by projectors}
First, we present some cases when $\CC{\widehat{\mathcal{S}}}$ is proper.

\begin{proposition} \label{prop:CWProLinComWellDefine}
Let $\alpha \in \mathbb{R}$. Assume that
\begin{align*}  
\widehat{\mathcal{S}}=\{\Id, (1-\alpha) \Id + \alpha \Pro_{U_{1}}, \ldots, (1-\alpha) \Id + \alpha \Pro_{U_{m}} \},
\end{align*}
and that
\begin{align*}  
\mathcal{S} = \{\Id, \R_{U_{1}}, \ldots,  \R_{U_{m}}\}.
\end{align*}
Then $\CC{\widehat{\mathcal{S}}}$ is proper. Moreover,
\begin{align} \label{eq:prop:CWProLinComWellDefine}
(\forall x \in \mathcal{H}) \quad \CC{\widehat{\mathcal{S}}}x =
\tfrac{\alpha}{2} \CC{\mathcal{S}}x + \big(1 - \tfrac{\alpha}{2}\big)x \in \mathcal{H}.
\end{align}
\end{proposition}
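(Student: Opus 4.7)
The plan is to reduce this proposition directly to \cref{prop:proper:affRU}, which already handles the case of affine combinations of $\Id$ with reflectors rather than projectors. The bridge between the two is the identity $\R_{U_i} = 2\Pro_{U_i} - \Id$, equivalently $\Pro_{U_i} = \frac{1}{2}(\Id + \R_{U_i})$, which allows me to rewrite each element of $\widehat{\mathcal{S}}$ as an affine combination of $\Id$ and a reflector.

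First, I would compute, for each $i \in \{1,\ldots,m\}$,
\begin{align*}
(1-\alpha)\Id + \alpha \Pro_{U_i}
&= (1-\alpha)\Id + \tfrac{\alpha}{2}(\Id + \R_{U_i})
= \Big(1 - \tfrac{\alpha}{2}\Big)\Id + \tfrac{\alpha}{2}\R_{U_i}.
\end{align*}
Hence, writing $\beta := \alpha/2$, the set $\widehat{\mathcal{S}}$ coincides with
\begin{align*}
\big\{\Id,\, (1-\beta)\Id + \beta \R_{U_1},\, \ldots,\, (1-\beta)\Id + \beta \R_{U_m}\big\},
\end{align*}
which is exactly the set considered in \cref{prop:proper:affRU} with the parameter $\beta$ in place of $\alpha$.

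Applying \cref{prop:proper:affRU} with this parameter, I obtain that the induced circumcenter mapping is proper and that for every $x \in \mathcal{H}$,
\begin{align*}
\CC{\widehat{\mathcal{S}}}x
= \beta\, \CC{\mathcal{S}}x + (1-\beta)x
= \tfrac{\alpha}{2}\CC{\mathcal{S}}x + \Big(1 - \tfrac{\alpha}{2}\Big)x \in \mathcal{H},
\end{align*}
which is precisely \cref{eq:prop:CWProLinComWellDefine}. The result therefore follows with no genuine obstacle: the only subtlety is performing the substitution $\Pro_{U_i} = \frac{1}{2}(\Id+\R_{U_i})$ cleanly and noting that the rescaling of $\alpha$ to $\alpha/2$ carries through both conclusions (properness and the explicit formula) of \cref{prop:proper:affRU}.
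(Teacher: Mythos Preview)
Your proof is correct and follows exactly the paper's approach: the paper's proof consists of the single sentence ``Apply \cref{prop:proper:affRU} with $\alpha$ replaced by $\frac{\alpha}{2}$,'' and your computation $(1-\alpha)\Id + \alpha\Pro_{U_i} = (1-\tfrac{\alpha}{2})\Id + \tfrac{\alpha}{2}\R_{U_i}$ is precisely the justification for that substitution.
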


\begin{proof}
Apply \cref{prop:proper:affRU} with $\alpha$ replaced by $\frac{\alpha}{2}$.
\end{proof}

Taking $\alpha =1$ in \cref{prop:CWProLinComWellDefine}, 
we deduce the next result. 

\begin{corollary} \label{cor:CWProLinComWellDefine}
Assume that $\widehat{\mathcal{S}}=\{\Id, \Pro_{U_{1}}, \ldots, \Pro_{U_{m-1}},\Pro_{U_{m}} \}$. Then $\CC{\widehat{\mathcal{S}}}$ is proper, that is for every $x \in \mathcal{H}$, there exists unique $\CC{\widehat{\mathcal{S}}}x \in \mathcal{H}$ satisfying
\begin{enumerate}
\item $\CC{\widehat{\mathcal{S}}}(x) \in \aff\{x, \Pro_{U_{1}}(x), \ldots,  \Pro_{U_{m-1}}(x), \Pro_{U_{m}}(x)\}$ 
\item $\norm{\CC{\widehat{\mathcal{S}}}(x)-x}=\norm{\CC{\widehat{\mathcal{S}}}(x)-\Pro_{U_{1}}(x)}=\cdots =\norm{\CC{\widehat{\mathcal{S}}}(x)-\Pro_{U_{m-1}}(x)}=\norm{\CC{\widehat{\mathcal{S}}}(x)-\Pro_{U_{m}}(x)}$.
\end{enumerate}
\end{corollary}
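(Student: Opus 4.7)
The plan is to obtain this corollary as a direct specialization of \cref{prop:CWProLinComWellDefine} by taking $\alpha = 1$. With this choice, one has $(1-\alpha)\Id + \alpha\Pro_{U_{i}} = \Pro_{U_{i}}$ for every $i \in \{1,\ldots,m\}$, so the set
\begin{align*}
\widehat{\mathcal{S}}=\{\Id, (1-\alpha) \Id + \alpha \Pro_{U_{1}}, \ldots, (1-\alpha) \Id + \alpha \Pro_{U_{m}} \}
\end{align*}
appearing in the hypothesis of \cref{prop:CWProLinComWellDefine} collapses precisely to $\{\Id, \Pro_{U_{1}}, \ldots, \Pro_{U_{m}}\}$, matching the set in the corollary verbatim.

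I would then invoke \cref{prop:CWProLinComWellDefine} to conclude that $\CC{\widehat{\mathcal{S}}}$ is proper; in particular, \cref{eq:prop:CWProLinComWellDefine} with $\alpha=1$ gives the explicit formula $\CC{\widehat{\mathcal{S}}}x = \tfrac{1}{2}\CC{\mathcal{S}}x + \tfrac{1}{2}x$ for every $x \in \mathcal{H}$, where $\mathcal{S}=\{\Id,\R_{U_{1}},\ldots,\R_{U_{m}}\}$. This is already more than what the corollary asks for.

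To finish, it remains to translate properness into the uniqueness-plus-two-conditions formulation stated in (i)--(ii). This is immediate from \cref{rem:cir:map}: since $\CC{\widehat{\mathcal{S}}}x \in \mathcal{H}$ for every $x$, \cref{rem:cir:map} characterizes $\CC{\widehat{\mathcal{S}}}x$ as the unique point of $\aff(\widehat{\mathcal{S}}(x)) = \aff\{x,\Pro_{U_{1}}x,\ldots,\Pro_{U_{m}}x\}$ that is equidistant from $x,\Pro_{U_{1}}x,\ldots,\Pro_{U_{m}}x$. I do not foresee any genuine obstacle here; the corollary is essentially a notational unpacking of \cref{prop:CWProLinComWellDefine}, and the entire proof can be recorded in one or two lines by citing that proposition together with \cref{rem:cir:map}.
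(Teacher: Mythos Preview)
Your proposal is correct and matches the paper's approach exactly: the paper derives the corollary in one line by taking $\alpha=1$ in \cref{prop:CWProLinComWellDefine}. Your additional appeal to \cref{rem:cir:map} to unpack properness into conditions (i)--(ii) is a harmless elaboration that the paper leaves implicit.
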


\begin{proposition} \label{prop:PU1PU2PU1:Proper}
Assume that $U_{2}$ is linear 
and that $\widehat{\mathcal{S}}=\{\Id, \Pro_{U_{1}}, \Pro_{U_{2}}\Pro_{U_{1}}
\}$. Then $\CC{\widehat{\mathcal{S}}}$ is proper.
\end{proposition}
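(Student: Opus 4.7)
The plan is to invoke \cref{thm:Proper3}: fixing an arbitrary $x \in \mathcal{H}$ with $\card \widehat{\mathcal{S}}(x) = 3$, and writing $p_1 := \Pro_{U_1}x$ and $p_2 := \Pro_{U_2}\Pro_{U_1}x$, I must show that $x, p_1, p_2$ are affinely independent. Note that $p_1 \neq x$, $p_2 \neq x$, and $p_2 \neq p_1$; in particular $p_1 \notin U_2$, since otherwise $\Pro_{U_2}p_1 = p_1$.

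I argue by contradiction: suppose $x, p_1, p_2$ lie on an affine line, so $p_2 - x = \lambda(p_1 - x)$ for some $\lambda \in \mathbb{R}\setminus\{0,1\}$, and hence $p_2 - p_1 = (\lambda - 1)(p_1 - x)$. Two orthogonality relations are automatic: since $p_1 = \Pro_{U_1}x$, \cref{prop:PR}\cref{prop:PR:Characterization} gives $p_1 - x \in L_1^\perp$, where $L_1 := U_1 - U_1$ is the linear subspace parallel to $U_1$ by \cref{fac:AffinePointLinearSpace}; and since $U_2$ is a closed linear subspace, \cref{MetrProSubs8} yields $p_2 - p_1 = -\Pro_{U_2^\perp}(p_1) \in U_2^\perp$. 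Because $\lambda \neq 1$, the proportionality $p_2 - p_1 = (\lambda - 1)(p_1 - x)$ forces $p_2 - p_1 \in L_1^\perp \cap U_2^\perp$.

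Now pick $u_0 \in U_1 \cap U_2$ (nonempty by the standing assumption) and set $\ell := p_1 - u_0 \in L_1$. Because $u_0 \in U_2$, \cref{MetrProSubs8}\cref{MetrProSubs8:iv} gives $\Pro_{U_2^\perp}u_0 = 0$, so by linearity of $\Pro_{U_2^\perp}$ we obtain $\Pro_{U_2^\perp}(\ell) = \Pro_{U_2^\perp}(p_1) = -(p_2 - p_1) \in L_1^\perp$. In particular $\Pro_{U_2^\perp}\ell$ is orthogonal to $\ell \in L_1$, and using the orthogonal decomposition $\ell = \Pro_{U_2}\ell + \Pro_{U_2^\perp}\ell$ from \cref{MetrProSubs8}\cref{MetrProSubs8:ii} the identity $\langle \Pro_{U_2^\perp}\ell, \ell\rangle = 0$ collapses to $\|\Pro_{U_2^\perp}\ell\|^2 = 0$. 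Hence $\ell \in U_2$, and so $p_1 = u_0 + \ell \in U_2$, contradicting $p_1 \notin U_2$. Thus $x, p_1, p_2$ are affinely independent and \cref{thm:Proper3} yields properness.

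The main obstacle is excluding the collinear configuration of the three points; the hypothesis that $U_2$ is linear enters essentially both through $p_2 - p_1 \in U_2^\perp$ and through the orthogonal decomposition that produces the collapse $\langle \Pro_{U_2^\perp}\ell, \ell\rangle = \|\Pro_{U_2^\perp}\ell\|^2$. If $U_2$ were merely affine, neither step would be available, which is consistent with the improper examples constructed in \cref{subsec:ImpcircuMappingRefle} for closely related configurations.
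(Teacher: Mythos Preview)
Your proof is correct but follows a genuinely different path from the paper's. After assuming affine dependence with $\card\widehat{\mathcal{S}}(x)=3$, the paper applies the linear operator $\Pro_{U_2}$ to both sides of the relation $\Pro_{U_2}\Pro_{U_1}x - x = \alpha(\Pro_{U_1}x - x)$ and uses idempotence of $\Pro_{U_2}$ to deduce $\Pro_{U_2}\Pro_{U_1}x = \Pro_{U_2}x$; this identifies $\widehat{\mathcal{S}}(x)$ with $\{x,\Pro_{U_1}x,\Pro_{U_2}x\}$, and the paper then invokes \cref{cor:CWProLinComWellDefine} (properness for $\{\Id,\Pro_{U_1},\Pro_{U_2}\}$, itself a consequence of the reflector result \cref{thm:CCS:proper} via \cref{prop:proper:affRU}) together with \cref{fact:clform:three} to reach a contradiction. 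Your argument is more self-contained: you extract the orthogonality constraints $p_1 - x \in L_1^\perp$ and $p_2 - p_1 \in U_2^\perp$, combine them through the collinearity to force $\Pro_{U_2^\perp}(p_1 - u_0) \perp (p_1 - u_0)$ for some $u_0 \in U_1 \cap U_2$, and conclude $p_1 \in U_2$, contradicting $p_2 \neq p_1$ directly. The paper's route is shorter on the page because it bootstraps from previously established machinery, while yours avoids that dependency entirely and exposes the underlying geometric reason collinearity fails.
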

\begin{proof}
Let $x \in \mathcal{H}$. If $\card \big( \widehat{\mathcal{S}}(x) \big) \leq 2$, by \cref{prop:form:m2:Oper}, $\CC{\widehat{\mathcal{S}}}x \in \mathcal{H}$. Now assume $\card \big( \widehat{\mathcal{S}}(x) \big) =3$. If $x, \Pro_{U_{1}}x, \Pro_{U_{2}}\Pro_{U_{1}}x $ are affinely independent, by \cref{fact:clform:three}, $\CC{\widehat{\mathcal{S}}}x \in \mathcal{H}$. 

Assume that 
\begin{align} \label{eq:prop:PU1PU2PU1:Properaffde}
x, \Pro_{U_{1}}x, \Pro_{U_{2}}\Pro_{U_{1}}x~\text{are affinely dependent}.
\end{align}
 Note that $\card \big( \widehat{\mathcal{S}}(x) \big) =3$ implies that
 $\Pro_{U_{1}}x -x \neq 0$; moreover, \cref{eq:prop:PU1PU2PU1:Properaffde}
 yields that there exists $\alpha \neq 1$ such that
\begin{align} \label{eq:prop:PU1PU2PU1:Proper:LineDep}
 \Pro_{U_{2}}\Pro_{U_{1}}x -x = \alpha (\Pro_{U_{1}}x -x).
\end{align}
Because $U_{2}$ is linear subspace, $\Pro_{U_{2}}$ is linear. Applying
to both sides of \cref{eq:prop:PU1PU2PU1:Proper:LineDep} the projector 
$\Pro_{U_{2}}$, we obtain
\begin{align}
&~\Pro_{U_{2}} \Pro_{U_{2}}\Pro_{U_{1}}x - \Pro_{U_{2}}x = \alpha ( \Pro_{U_{2}}\Pro_{U_{1}}x - \Pro_{U_{2}}x) \nonumber \\
\Longrightarrow &~ \Pro_{U_{2}}\Pro_{U_{1}}x - \Pro_{U_{2}}x = \alpha ( \Pro_{U_{2}}\Pro_{U_{1}}x - \Pro_{U_{2}}x) \quad (\text{by \cref{lem:PU:RUIdempotent}\cref{lem:PUIdempotent}}) \nonumber \\
\Longrightarrow &~ (1-\alpha) \Pro_{U_{2}}\Pro_{U_{1}}x =   (1-\alpha) \Pro_{U_{2}}x \nonumber \\
\Longrightarrow &~ \Pro_{U_{2}}\Pro_{U_{1}}x = \Pro_{U_{2}}x. \quad ( \alpha \neq 1)  \label{eq:PU2PU1ALPHA}
\end{align}
Combining $\card \big( \widehat{\mathcal{S}}(x) \big) =3$ with
\cref{eq:prop:PU1PU2PU1:Properaffde} and \cref{eq:PU2PU1ALPHA}, 
we deduce that $x,
\Pro_{U_{1}}x, \Pro_{U_{2}}x$ are pairwise distinct and affinely dependent.
Applying \cref{cor:CWProLinComWellDefine} to $m=2$, we obtain $\CCO{(\{x,
\Pro_{U_{1}}x, \Pro_{U_{2}}x \})} \in \mathcal{H}$. 
But this contradicts \cref{fact:clform:three}.
Therefore, $\dom \CC{\widehat{\mathcal{S}}} = \mathcal{H}$. 
\end{proof}

\begin{proposition} \label{prop:PU1U2:Proper}
Assume that $U_{2}$ is linear and that $\widehat{\mathcal{S}}=\{\Id, \Pro_{U_{2}}, \Pro_{U_{2}}\Pro_{U_{1}} \}$. Then $\CC{\widehat{\mathcal{S}}}$ is proper.
\end{proposition}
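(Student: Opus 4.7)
The plan is to mimic the strategy used in the proof of \cref{prop:PU1PU2PU1:Proper}: split on the cardinality of $\widehat{\mathcal{S}}(x)$, handle the trivial cases, and rule out the only remaining problematic case (three pairwise distinct but affinely dependent points) by a short linear-algebraic argument that exploits the linearity of $\Pro_{U_{2}}$.

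Fix $x \in \mathcal{H}$. If $\card\bigl(\widehat{\mathcal{S}}(x)\bigr) \leq 2$, then $\CC{\widehat{\mathcal{S}}}x \in \mathcal{H}$ by \cref{prop:form:m2:Oper}. If $\card\bigl(\widehat{\mathcal{S}}(x)\bigr) = 3$ and the three points $x$, $\Pro_{U_{2}}x$, $\Pro_{U_{2}}\Pro_{U_{1}}x$ are affinely independent, then $\CC{\widehat{\mathcal{S}}}x \in \mathcal{H}$ by \cref{fact:clform:three}. So the only case to eliminate is when $\card\bigl(\widehat{\mathcal{S}}(x)\bigr)=3$ and these three points are affinely dependent.

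In that case, since $\Pro_{U_{2}}x - x \neq 0$, there is some $\alpha \in \mathbb{R}$ with $\Pro_{U_{2}}\Pro_{U_{1}}x - x = \alpha(\Pro_{U_{2}}x - x)$. Now I apply $\Pro_{U_{2}}$ to both sides; this is permitted because $U_{2}$ is a linear subspace, so $\Pro_{U_{2}}$ is linear. Using the idempotence $\Pro_{U_{2}}\Pro_{U_{2}} = \Pro_{U_{2}}$ from \cref{lem:PU:RUIdempotent}\cref{lem:PUIdempotent}, the right-hand side collapses to $\alpha(\Pro_{U_{2}}x - \Pro_{U_{2}}x) = 0$, which forces $\Pro_{U_{2}}\Pro_{U_{1}}x = \Pro_{U_{2}}x$. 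This contradicts $\card\bigl(\widehat{\mathcal{S}}(x)\bigr)=3$, completing the proof.

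The only mild subtlety is making sure the contradictory case really does produce the identity $\Pro_{U_{2}}\Pro_{U_{1}}x = \Pro_{U_{2}}x$ without having to track the value of $\alpha$; but the computation above shows that the right-hand side vanishes regardless of $\alpha$ once $\Pro_{U_{2}}^{2}=\Pro_{U_{2}}$ is used, so no case analysis on $\alpha$ is required. Compared with \cref{prop:PU1PU2PU1:Proper}, this version is actually cleaner, since the equation $\Pro_{U_{2}}x - \Pro_{U_{2}}x = 0$ is automatic and we need not invoke any constraint like $\alpha \neq 1$.
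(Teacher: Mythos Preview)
Your proof is correct and follows essentially the same approach as the paper: reduce to the case $\card(\widehat{\mathcal{S}}(x))=3$ with affinely dependent points, write $\Pro_{U_{2}}\Pro_{U_{1}}x - x = \alpha(\Pro_{U_{2}}x - x)$, apply the linear idempotent $\Pro_{U_{2}}$, and obtain $\Pro_{U_{2}}\Pro_{U_{1}}x = \Pro_{U_{2}}x$, contradicting the cardinality assumption. Your observation that the constraint $\alpha\neq 1$ carried over from \cref{prop:PU1PU2PU1:Proper} is unnecessary here is correct, since the right-hand side vanishes for every $\alpha$.
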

\begin{proof}
Let $x \in \mathcal{H}$. Similarly to the proof in
\cref{prop:PU1PU2PU1:Proper}, we arrive at a contradiction for the
case where $\card \big( \widehat{\mathcal{S}}(x) \big) =3$ and there exists
$\alpha \neq 1$ such that
\begin{align} \label{eq:prop:PU1U2:Proper:LineDep}
\Pro_{U_{2}}\Pro_{U_{1}}x-x=\alpha ( \Pro_{U_{2}}x-x ).
\end{align}
As in the proof of \cref{prop:PU1PU2PU1:Proper}, we apply to 
both sides of  \cref{eq:prop:PU1U2:Proper:LineDep} the projector 
$\Pro_{U_{2}}$. Then
\begin{align*}
& \Pro_{U_{2}} \Pro_{U_{2}}\Pro_{U_{1}}x-\Pro_{U_{2}}x =\alpha (\Pro_{U_{2}} \Pro_{U_{2}}x- \Pro_{U_{2}} x)  \\
\Longrightarrow &~ \Pro_{U_{2}}\Pro_{U_{1}}x-\Pro_{U_{2}}x= \alpha (\Pro_{U_{2}}x-\Pro_{U_{2}}x) =0 \quad (\text{by \cref{lem:PU:RUIdempotent}\cref{lem:PUIdempotent})}\\
\Longrightarrow & \Pro_{U_{2}}\Pro_{U_{1}}x=\Pro_{U_{2}}x.
\end{align*}
which contradicts $\card \big( \widehat{\mathcal{S}}(x) \big) =3$. 
\end{proof}

\subsection{Improper circumcenter mappings induced by projectors} \label{Sec:Subsec:ImproProjec}

Propositions \ref{prop:CWProLinComWellDefine}, \ref{prop:PU1PU2PU1:Proper},
and \ref{prop:PU1U2:Proper} prompt the following question:

\begin{question} \label{ques:AffComb:Projec}
Suppose that $\{ \alpha_{1}, \alpha_{2} \} \subseteq \mathbb{R}
\smallsetminus \{0, 1\}$ and that at least one of $\alpha_{1}, \alpha_{2}$ is
not $2$.\footnotemark ~Assume that $\widehat{\mathcal{S}} = \Big\{ \Id, (1 -
\alpha_{1}) \Id + \alpha_{1} \Pro_{U_{1}}, \big ( (1 -\alpha_{2}) \Id +
\alpha_{2}\Pro_{U_{2}} \big) \circ \big( (1 - \alpha_{1}) \Id + \alpha_{1}
\Pro_{U_{1}} \big)\Big\}$ or $\widehat{\mathcal{S}} = \Big\{ \Id, (1 -
\alpha_{1}) \Id + \alpha_{1} \Pro_{U_{2}}, \big ( (1 -\alpha_{2}) \Id +
\alpha_{2}\Pro_{U_{2}} \big) \circ \big( (1 - \alpha_{1}) \Id + \alpha_{1}
\Pro_{U_{1}} \big)\Big\}$. Is $\CC{\widehat{\mathcal{S}}}$ proper?
\end{question}
\footnotetext{For $i \in \{1,2\}$, when $\alpha_{i}$ is $0, 1$, or $2$, then $(1  - \alpha_{i})  \Id + \alpha_{i} \Pro_{U_{1}}$ is $\Id, \Pro_{U_{1}}$, or $\R_{U_{1}}$ respectively. In these special cases, the answer for \cref{ques:AffComb:Projec} is positive (see \cref{prop:CWProLinComWellDefine} and \cref{thm:CCS:proper}\cref{thm:CCS:proper:prop}). }
The following example demonstrates 
that the answer to \cref{ques:AffComb:Projec} is negative. 

\begin{example} \label{exam:UUPUIMPROP}
Assume that $m=2$ and that $U:=U_{1} = U_{2} \subsetneqq \mathcal{H}$ and
$\{\alpha_{1}, \alpha_{2} \} \subseteq \mathbb{R}$. Assume further that
$\widehat{\mathcal{S}} = \big\{ \Id, (1 - \alpha_{1}) \Id + \alpha_{1}
\Pro_{U}, \big ( (1 -\alpha_{2}) \Id + \alpha_{2}\Pro_{U} \big) \circ \big(
(1 - \alpha_{1}) \Id + \alpha_{1} \Pro_{U} \big)\big\}$.
Then $\CC{\widehat{\mathcal{S}}}$ is improper if and only if  $ \alpha_{1} \neq 0,  \alpha_{1} \neq 1, \alpha_{2} \neq 0$ and $\alpha_{2} \neq \frac{\alpha_{1}}{\alpha_{1} -1}$.
\end{example}
\begin{proof}
Since $\R_{U} =2 \Pro_{U} -\Id $, we deduce that
\begin{align*}
\widehat{\mathcal{S}} & = \Big\{ \Id, (1  - \alpha_{1})  \Id + \alpha_{1} \Pro_{U},  \big ( (1 -\alpha_{2}) \Id  + \alpha_{2}\Pro_{U} \big) \circ  \big( (1  - \alpha_{1})  \Id + \alpha_{1} \Pro_{U} \big) \Big\} \\
& = \Big\{ \Id,  \Id + \alpha_{1} ( \Pro_{U} - \Id ),  \big( \Id  + \alpha_{2} ( \Pro_{U} -\Id ) \big) \circ  \big(  \Id + \alpha_{1} ( \Pro_{U}- \Id ) \big) \Big\} \\
& =   \Big\{ \Id,  \Id + \frac{\alpha_{1}}{2} ( \R_{U} - \Id ),  \big( \Id  + \frac{\alpha_{2}}{2} ( \R_{U} -\Id ) \big) \circ  \big(  \Id + \frac{\alpha_{1}}{2}  ( \R_{U}- \Id ) \big) \Big\}. 
\end{align*}
The result now follows from the assumptions above and \cref{exam::IMp:affRURU}. 
\end{proof}

Next, we present further improper instances of 
$\CC{\widehat{\mathcal{S}}}$, where $\Id \in  \widehat{\mathcal{S}} \subseteq \aff \Theta$. 

\begin{example} \label{exam:ImproperColinear}
Assume that $\mathcal{H}=\mathbb{R}^{2}$, that $m=2$, 
that $U_{1} = \mathbb{R} \cdot (1,0)$, and that $U_{2}= \mathbb{R} \cdot (1,2)$.
Assume further that $\widehat{\mathcal{S}}=\{\Id, \Pro_{U_{2}}\Pro_{U_{1}},
\Pro_{U_{2}}\Pro_{U_{1}}\Pro_{U_{2}}\Pro_{U_{1}}\}$. Take $x=(2, 4) \in
U_{2}$. As \cref{fig: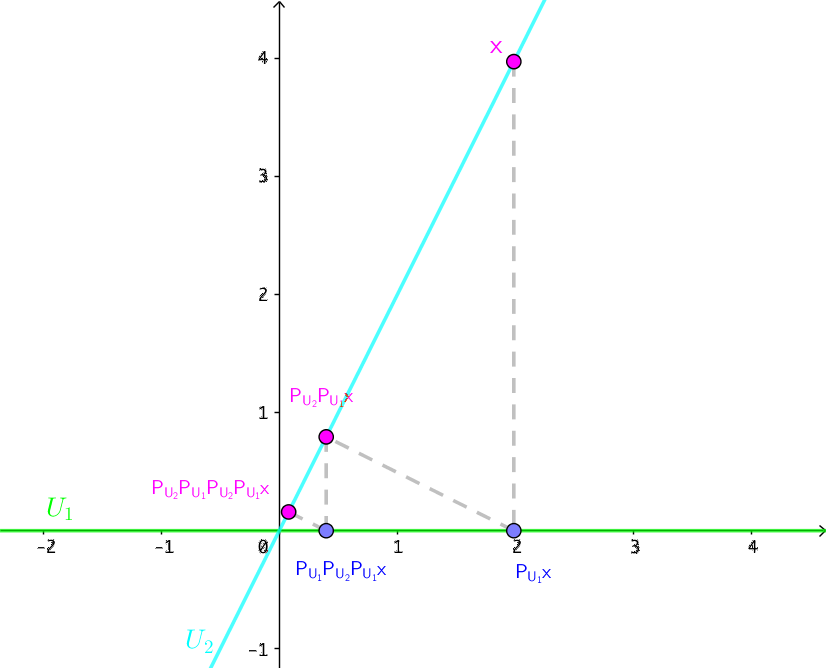} illustrates, $x$,
$\Pro_{U_{2}}\Pro_{U_{1}}x$, and
$\Pro_{U_{2}}\Pro_{U_{1}}\Pro_{U_{2}}\Pro_{U_{1}}x$ are pairwise distinct and
colinear. By \cref{thm:Proper3}, $\CC{\widehat{\mathcal{S}}}$ is improper.
\begin{figure}[H] 
\begin{center}\includegraphics[scale=1.1]{Pro_ContExam_Nonexist3Point.png}
\end{center}
\caption{\cref{exam:ImproperColinear} illustrates $\CC{\widehat{\mathcal{S}}}x = \varnothing$ for the colinear case.} \label{fig:Pro_ContExam_Nonexist3Point.png}
\end{figure}
\end{example}

\begin{example} \label{exam:Improper:Noncolinear}
Assume that $\mathcal{H}=\mathbb{R}^{2}$, that $m=2$,
that $U_{1} = \mathbb{R} \cdot (1,0)$, and that $U_{2} = \mathbb{R} \cdot (1,1)$.
Assume further that $\widehat{\mathcal{S}}=\{\Id, \Pro_{U_{1}}, \Pro_{U_{2}},
\Pro_{U_{2}}\Pro_{U_{1}}\}$. Take $x=(4,2)$ and set $\mathcal{K}=\{\Id,
\Pro_{U_{1}}, \Pro_{U_{2}}\}$. Clearly, $\Pro_{U_{2}}\Pro_{U_{1}}x-x \in
\mathbb{R}^{2}= \spn \{ \Pro_{U_{1}}x-x, \Pro_{U_{2}}x-x \}$, which implies
that $\aff (\mathcal{K}(x)) = \aff (\widehat{\mathcal{S}}(x))$. By
\cref{fact:unique:BasisPformula}, if $\CC{\widehat{\mathcal{S}}}x \in
\mathcal{H}$, then $\CC{\widehat{\mathcal{S}}}x = \CC{\mathcal{K}}x$.
As \cref{fig: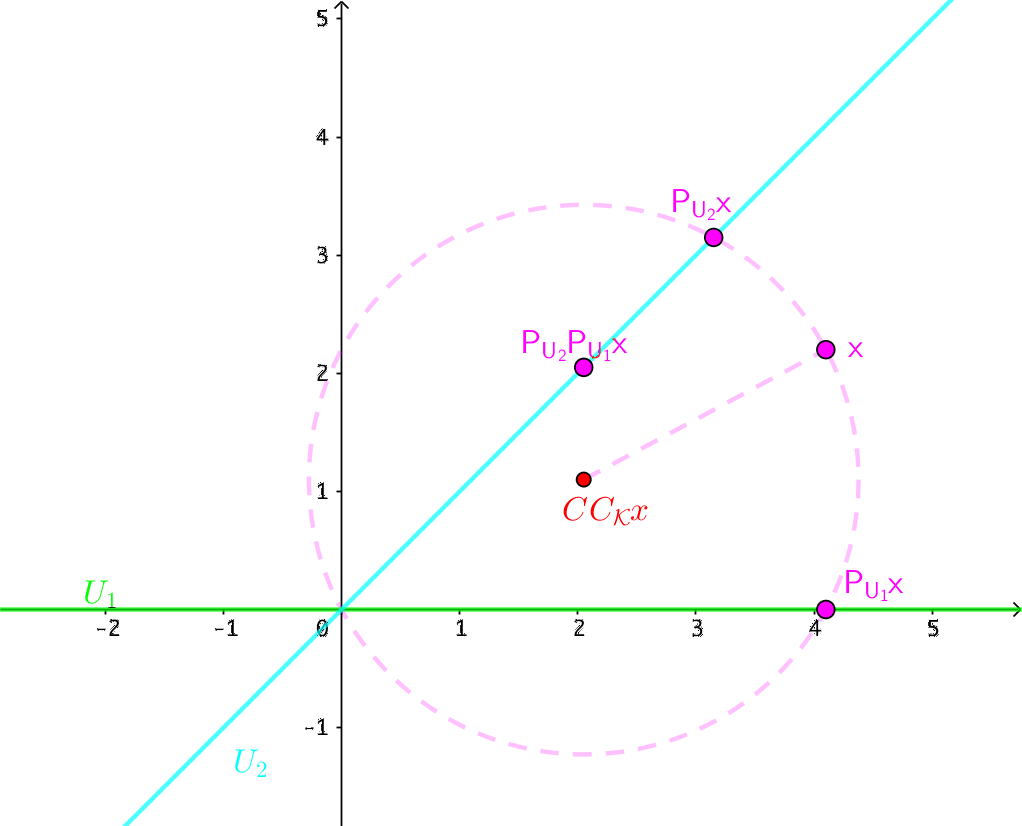} shows, 
\begin{align*}
\norm{\CC{\mathcal{K}}x-x}=\norm{\CC{\mathcal{K}}x-\Pro_{U_{1}}x}=\norm{\CC{\mathcal{K}}x-\Pro_{U_{2}}x}\neq \norm{\CC{\mathcal{K}}x-\Pro_{U_{2}}\Pro_{U_{1}}x}.
\end{align*}
Hence $\CC{\widehat{\mathcal{S}}}x = \varnothing$, which implies that $\CC{\widehat{\mathcal{S}}}$ is improper.
\begin{figure}[H]
\begin{center}\includegraphics[scale=0.8]{Pro_ContExam_Nonexist.png}
\end{center}
\caption{\cref{exam:Improper:Noncolinear} illustrates
$\CC{\widehat{\mathcal{S}}}x
= \varnothing$ for the non-colinear case.}
\label{fig:Pro_ContExam_Nonexist.png}
\end{figure}
\end{example}

%---------------------------------------------More improper circumcenter mappings---------------------------------------------
\section{More improper circumcenter mappings induced by reflectors} \label{sec:MoreImproper}
In \cref{thm:CCS:proper}\cref{thm:CCS:proper:prop}, to prove
$\CC{\mathcal{S}}$ is proper, we required that
\begin{align} \label{eq:Cond:CCS:Prop}
U_{1}, \ldots, U_{m}~\text{are closed affine subspaces in}~\mathcal{H}~\text{with}~\cap^{m}_{i=1} U_{i} \neq \varnothing,
\end{align}
and that 
\begin{align} \label{eq:Cond:CCS:Prop:SIDOME}
\Id \in  \mathcal{S} \subseteq \Omega .
\end{align}
In \cref{subsec:ImpcircuMappingRefle}, we have already seen that when the
condition $\mathcal{S} \subseteq \Omega$ fails, the circumcenter mapping
induced by reflectors $\CC{\mathcal{S}}$ may be improper. In the remaining
part of this section, we consider two circumcenter mappings induced by
reflectors, where $m=2$ and $\mathcal{S}= \{\Id, \R_{U_{1}}, \R_{U_{2}}\}$ or
$\mathcal{S}= \{\Id, \R_{U_{1}}, \R_{U_{2}}\R_{U_{1}}\}$. We construct 
additional improper circumcenter mappings with the conditions in
\cref{eq:Cond:CCS:Prop} not being satisfied, which means that the conditions
\cref{eq:Cond:CCS:Prop} and \cref{eq:Cond:CCS:Prop:SIDOME} are sharp.

\subsection{Inconsistent cases} \label{sec:FailPropInconsis}
In this subsection, we focus on the case when 
$\cap^{m}_{i=1} U_{i} = \varnothing$. 
Let $U$ and $V$ be two nonempty, closed, convex (possibly nonintersecting) 
subsets of $\mathcal{H}$. A \emph{best approximation pair} relative to $(U,V)$ is
\begin{empheq}{equation*} 
\quad (a,b) \in U\times V \quad \text{such that} \quad  \norm{a-b}=\inf~ \norm{U-V}.
\end{empheq}
In \cite{BCL2004}, the authors used the Douglas--Rachford splitting operator
$T = \frac{\R_{V}\R_{U} +\Id}{2}$ to find a best approximation pair
relative to $(U,V)$.

\begin{fact} {\rm \cite[Theorem~3.13 and Remark~3.14(ii)]{BCL2004}} \label{fac:BesApproxPair}
Let $U$ be a closed affine subspace and let $V$ be a nonempty, closed, convex
set in $\mathcal{H}$ ($U, V$ are possibly non-intersecting). Suppose that best
approximation pairs relative to $(U,V)$ exist. Set $T:= \frac{\R_{V}\R_{U}
+\Id}{2}$. Let $x_{0} \in \mathcal{H}$ and set $x_{n} =T^{n}x_{0}$, for all
$n \in \mathbb{N}$. Then
\begin{align*}
\big( (\Pro_{V}\R_{U}x_{n},\Pro_{U}x_{n}) \big)_{n \in \mathbb{N}} \quad  \text{and} \quad \big( (\Pro_{V}\Pro_{U}x_{n},\Pro_{U}x_{n}) \big)_{n \in \mathbb{N}}
\end{align*} 
both converge weakly to best approximation pairs relative to $(U,V)$.
\end{fact}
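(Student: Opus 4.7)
The plan is to adapt the Krasnoselskii--Mann analysis of the Douglas--Rachford splitting to the possibly inconsistent case. The central idea is that, even when $U \cap V = \varnothing$, the displacement $x_n - T x_n$ converges strongly to a \emph{gap vector} $v$ of norm $\inf \norm{U-V}$, and the \emph{shadow} sequence $(\Pro_U x_n)_{n \in \mathbb{N}}$ is Fej\'er monotone with respect to a suitable shifted set.

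First, I would verify that $T = \frac{1}{2}(\Id + \R_V \R_U)$ is firmly nonexpansive. Since $U$ is a closed affine subspace, \cref{prop:PR}\cref{prop:PR:isometry} gives that $\R_U$ is an isometry. The reflector $\R_V = 2\Pro_V - \Id$ is nonexpansive because $\Pro_V$ is firmly nonexpansive on the closed convex set $V$, so $\R_V \R_U$ is nonexpansive and averaging with $\Id$ yields a $\frac{1}{2}$-averaged operator. Let $v := \Pro_{\overline{V-U}}(0)$, so that $\norm{v} = \inf \norm{U-V}$; the hypothesis that best approximation pairs exist is equivalent to $v \in V - U$. Firm nonexpansiveness of $T$ forces $(\norm{x_{n+1} - x_n})_{n \in \mathbb{N}}$ to be nonincreasing, and a telescoping argument against the infimum identifies its limit with $\norm{v}$; a parallelogram-style computation then upgrades this to
\begin{equation*}
x_n - T x_n \;=\; \tfrac{1}{2}(x_n - \R_V \R_U x_n) \;\longrightarrow\; v \quad \text{in norm.}
\end{equation*}

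Next, I would analyze the shadow sequence. Because $U$ is affine, $\Pro_U$ is affine by \cref{prop:PR}\cref{prop:PR:Affine}, and exploiting this affinity together with the strong convergence of the displacement one shows that $(\Pro_U x_n)_{n \in \mathbb{N}}$ is Fej\'er monotone with respect to the nonempty set $F := \{a \in U \mid a + v \in V\}$. Opial's lemma, combined with the observation that every weak sequential cluster point of $(\Pro_U x_n)$ lies in $F$, then yields $\Pro_U x_n \weakly \bar{a}$ for some $\bar{a} \in F$; consequently $\bar{b} := \bar{a} + v \in V$ and $(\bar{a}, \bar{b})$ is a best approximation pair. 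The weak convergence of $\Pro_V \R_U x_n$ and $\Pro_V \Pro_U x_n$ to $\bar{b}$ is then extracted from the nonexpansiveness of $\Pro_V$, the strong convergence of the displacement, and the characterization $\Pro_V(\bar{a}) = \bar{b}$ built into the definition of a best approximation pair.

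The main obstacle is that, in the inconsistent case, $\Fix T$ may be empty, so the standard Browder--G\"ohde--Krasnoselskii machinery cannot be applied directly to $(x_n)$. The remedy is precisely the shift by $v$: one transfers Fej\'er monotonicity from the possibly empty set $\Fix T$ to the nonempty shifted set $F$ of shadow limits, and then pulls weak convergence of the shadow back onto $\Pro_V \R_U x_n$ and $\Pro_V \Pro_U x_n$ via the strongly convergent displacement.
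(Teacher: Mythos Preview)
The paper does not prove this statement: it is recorded as a \emph{Fact} with a citation to \cite[Theorem~3.13 and Remark~3.14(ii)]{BCL2004} and no argument is supplied. There is therefore no proof in the present paper to compare your proposal against; what you have sketched is, in outline, the argument of the cited source itself.

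Your plan follows the correct strategy of that source: establish firm nonexpansiveness of $T$, obtain strong convergence of the displacement $x_n - Tx_n$ to the gap vector, prove Fej\'er monotonicity of the shadow sequence $(\Pro_U x_n)_{n\in\mathbb{N}}$ with respect to the set $F$ of first coordinates of best approximation pairs, and then transfer the weak limit to the $V$-shadows. Two places deserve more care. First, track the sign of the gap vector: with your convention $v = \Pro_{\overline{V-U}}(0) \in \overline{V-U}$, a direct computation gives $x_n - Tx_n = \Pro_U x_n - \Pro_V\R_U x_n \in U - V$, so the strong limit of the displacement is $-v$, not $v$. Second, the final step---deducing weak convergence of $\Pro_V\R_U x_n$ from that of $\Pro_U x_n$---cannot rest on ``nonexpansiveness of $\Pro_V$'' alone, since nonexpansive maps are not weakly sequentially continuous in general; the correct route is the algebraic identity $\Pro_V\R_U x_n = \Pro_U x_n - (x_n - Tx_n)$, which combines a weakly convergent term with a strongly convergent one. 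The convergence of $\Pro_V\Pro_U x_n$ requires a separate, slightly more delicate argument along the lines of \cite[Remark~3.14]{BCL2004}. With these refinements your outline matches the original source.
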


The following examples show that even if both of $U_{1}, U_{2}$ are closed
affine subspaces, when $U_{1} \cap U_{2} = \varnothing$, the
operator $\CC{\mathcal{S}}$ may not be proper where $\mathcal{S}=\{\Id, \R_{U_{1}},
\R_{U_{2}}\}$ or $\mathcal{S}=\{\Id, \R_{U_{1}}, \R_{U_{2}}\R_{U_{1}}\}$.
(Notice that in \cref{exam:PointLine}, $U_{1}$ is even a compact set.)
Hence, we can not directly generalize \cref{fac:BesApproxPair} by the circumcenter mapping induced by reflectors. 

The results of the following examples in this section are easily from \cref{cor:T1T2T3Dom} and the proofs are omitted.

\begin{example}  \label{exam:PointLine}
Assume that $\mathcal{H} =\mathbb{R}^{2}$, 
that $U_{1}=\{(2,0)\}$, and that $U_{2} = \mathbb{R} \cdot (0,1)$. 
Set $\mathcal{S}_{1}=\{\Id, \R_{U_{1}}, \R_{U_{2}}\}$ and
$\mathcal{S}_{2}=\{\Id, \R_{U_{1}}, \R_{U_{2}}\R_{U_{1}}\}$. Then 
\begin{align*}
& \dom \CC{\mathcal{S}_{1}} =\big( \mathbb{R}^{2} \smallsetminus \mathbb{R}\cdot(1,0) \big ) \cup \big\{(2,0), (0,0) \big\}, \\
& \dom \CC{\mathcal{S}_{2}} =\big( \mathbb{R}^{2} \smallsetminus \mathbb{R}\cdot(1,0) \big ) \cup \big\{(2,0), (4,0)\big\}.
\end{align*}
\end{example}

\subsection{Non-affine cases} \label{sec:FailPropNon-affin}

One of the charming aspects of the Douglas--Rachford method is that it can be
used for general convex sets. In this subsection, we assume that
\begin{align*}
\mathcal{S}_{1}=\{\Id, \R_{U_{1}}, \R_{U_{2}}\} \quad \text{or} \quad  \mathcal{S}_{2}=\{\Id, \R_{U_{1}}, \R_{U_{2}}\R_{U_{1}}\}.
\end{align*}
We shall present examples in which the operator 
$\CC{\mathcal{S}}$ is improper,
with at least one of $U_{1}$ and $U_{2}$ not being an affine subspace while $U_{1}
\cap U_{2} \neq \varnothing$.

\begin{example} \label{exam:ConvConeLin}
Assume that $\mathcal{H} = \mathbb{R}^{2}$, that $U_{1}=
\mathbb{R}^{2}_{+}$, and that $U_{2}= (2,0)+\mathbb{R} \cdot (0,1)$. Then
\begin{align*}
& \dom \CC{\mathcal{S}_{1}} =  \mathbb{R}^{2} \smallsetminus \{(x,y) ~|~x<0 ~\text{and}~y \geq 0\}, \\
& \dom  \CC{\mathcal{S}_{2}} = \big( \mathbb{R}^{2} \smallsetminus \{(x,y) ~|~ x<0 ~\text{and}~y \geq 0 \} \big) \cup \{(-2,y) ~|~y \geq0 \} . 
\end{align*}
\end{example}

In the remainder of this subsection, we revisit the examples used in
\cite{BCS2017} to show the potential of the Circumcentering Douglas--Rachford
method, which are the iterations of the operator $\CC{\mathcal{S}_{2}}$.

\begin{example} \label{exam:BallLine}
Assume that $\mathcal{H} = \mathbb{R}^{2}$, that $U_{1}=\mathbf{B}[(0,0);1]$, and that $U_{2}=(1,0)+\mathbb{R} \cdot
(0,1)$. Then 
\begin{align*}
& \dom \CC{\mathcal{S}_{1}} =  \mathbb{R}^{2} \smallsetminus \{(x,0) ~|~ x <-1 \},\\
& \dom  \CC{\mathcal{S}_{2}} = \mathbb{R}^{2} \smallsetminus \{(x,0) ~|~ x<-3 ~\text{or}~  -3<x <-1\} . 
\end{align*}
\end{example}

\begin{example} \label{exam:Ball_LineMore}
Assume that $\mathcal{H} = \mathbb{R}^{2}$, 
that $U_{1}=\mathbf{B}[(0,0);1]$, 
and that $U_{2}=\mathbb{R} \cdot (0,1)$. Then
\begin{align*}
& \dom \CC{\mathcal{S}_{1}} =  \mathbb{R}^{2} \smallsetminus \{(x,0) ~|~ x <-1 ~\text{or}~x >1 \} ,\\
& \dom  \CC{\mathcal{S}_{2}} = \mathbb{R}^{2} \smallsetminus \{(x,0) ~|~ x<-2 ~\text{or}~  -2<x <-1 ~\text{or}~   1 <x < 2 ~\text{or}~ x >2\} . 
\end{align*}
\end{example}

\begin{example} \label{exam:BallBall:dom}
Assume that $\mathcal{H}= \mathbb{R}^{2}$, that $U_{1}=\mathbf{B}[(-1,0);1]$, and that $U_{2}=\mathbf{B}[(1,0);1]$.
Then  
\begin{equation*}
\dom \CC{\mathcal{S}_{1}} =  \mathbb{R}^{2} \smallsetminus \{(x,0) ~|~ x <-2 ~\text{or}~x >2 \},
\end{equation*}
\begin{equation*}
\{(x,0) ~|~ -6 \leq x \leq -4~\text{or}~ x \geq -2\} \subseteq  \dom \CC{\mathcal{S}_{2}},
\end{equation*}
\begin{equation*}
\{(x,0) ~|~x<-6 ~\text{or}~ -4 <x<-2\} \subseteq  \mathbb{R}^2\smallsetminus \big( \dom \CC{\mathcal{S}_{2}} \big). 
\end{equation*}
\end{example}

\begin{example} \label{exam:Ball_Ball:dom}
Assume that $\mathcal{H} = \mathbb{R}^{2}$, that $U_{1}=\mathbf{B}[(-1,0);2]$, and that $U_{2}=\mathbf{B}[(1,0);2]$. Then 
 \begin{equation*}
   \dom \CC{\mathcal{S}_{1}} =  \mathbb{R}^{2} \smallsetminus \{(x,0) ~|~ x <-3 ~\text{or}~x >3 \},
 \end{equation*}
 \begin{equation*}
   \{(x,0) ~|~ -9 \leq x \leq -5~\text{or}~ -3 \leq x \leq 3\} \subseteq  \dom \CC{\mathcal{S}_{2}},
\end{equation*}
 \begin{equation*}
  \{(x,0) ~|~x<-9 ~\text{or}~ -5 <x<-3 ~\text{or}~x >3\} \subseteq  \mathbb{R}^2\smallsetminus \big( \dom \CC{\mathcal{S}_{2}} \big).
 \end{equation*}
\end{example}

Finally, consider $U_{1}=\{(x,y) \in \mathbb{R}^{2} ~|~ (x+1)^{2}+y^{2} =
4\}$ and $U_{2}=\{(x,y) \in \mathbb{R}^{2} ~|~ (x-1)^{2}+y^{2} = 4\}$. Note that
neither $U _{1}$ nor $U_{2}$ is convex. For $\mathcal{S}=\{\Id,
\R_{U_{1}}, \R_{U_{2}}\R_{U_{1}}\}$ or $\mathcal{S}=\{\Id, \R_{U_{1}},
\R_{U_{2}}\}$, one can show that $
\dom \CC{\mathcal{S}} \subsetneqq \mathbb{R}^2$. 

\subsection{Impossibility to extend to maximally monotone operators} \label{sec:FailGenMaxiMono}
Assume that $\mathcal{S}= \{\Id, \R_{U_{1}}, \R_{U_{2}} \}$ or $\mathcal{S}=
\{\Id, \R_{U_{1}}, \R_{U_{2}}\R_{U_{1}}\}$. In order to show a counterexample
where the definition of $\CC{\mathcal{S}}$ fails to be directly generalized
to maximally monotone theory, we need the definition and facts below.

\begin{definition} {\rm \cite[Definition~23.1]{BC2017}}
Let $A : \mathcal{H} \rightarrow 2^{\mathcal{H}}$. The \emph{resolvent} of $A$ is
\begin{align*}
J_{A} = (\Id + A)^{-1}.
\end{align*}
\end{definition}

\begin{fact} {\rm \cite[Corollary~23.11]{BC2017}}
Let $A : \mathcal{H} \rightarrow 2^{\mathcal{H}}$ be maximally monotone and let $\gamma \in \mathbb{R}_{++}$. Then the following hold.
\begin{enumerate}
\item $J_{\gamma A} : \mathcal{H} \rightarrow \mathcal{H}$ and $\Id - J_{\gamma A} : \mathcal{H} \rightarrow \mathcal{H}$ are firmly nonexpansive and maximally monotone.
\item The \emph{reflected resolvent} 
\begin{align*}
R_{\gamma A}: \mathcal{H} \rightarrow \mathcal{H} : x \mapsto 2 J_{\gamma A} x -x.
\end{align*}
is nonexpansive.
\end{enumerate}
\end{fact}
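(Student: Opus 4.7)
The plan is to derive each assertion from the characterization
\begin{align*}
p = J_{\gamma A}x \;\Longleftrightarrow\; x - p \in \gamma A(p),
\end{align*}
which is legitimate because maximal monotonicity of $A$ implies (Minty's theorem) that $\Id + \gamma A$ is surjective with single-valued inverse $J_{\gamma A}\colon \mathcal{H} \to \mathcal{H}$. The genuine obstacle here is Minty's theorem itself; once it is invoked, everything reduces to one monotonicity inequality.

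First I would establish firm nonexpansiveness of $J_{\gamma A}$. Fix $x, y \in \mathcal{H}$ and set $p := J_{\gamma A}x$, $q := J_{\gamma A}y$. Then $x - p \in \gamma A(p)$ and $y - q \in \gamma A(q)$, so the monotonicity of $\gamma A$ yields $\innp{(x - p) - (y - q),\, p - q} \geq 0$, which rearranges to
\begin{align*}
\innp{x - y,\, p - q} \geq \norm{p - q}^{2}.
\end{align*}
This is the defining inequality of firm nonexpansiveness of $J_{\gamma A}$. Firm nonexpansiveness of $\Id - J_{\gamma A}$ then follows from the algebraic identity
\begin{align*}
\innp{(\Id - T)x - (\Id - T)y,\, x - y} - \norm{(\Id - T)x - (\Id - T)y}^{2} = \innp{Tx - Ty,\, x - y} - \norm{Tx - Ty}^{2}
\end{align*}
applied to $T := J_{\gamma A}$ (the left and right sides are symmetric under swapping $T$ with $\Id - T$).

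Next, nonexpansiveness of $R_{\gamma A} = 2 J_{\gamma A} - \Id$ drops out by direct expansion, invoking $\norm{p - q}^{2} \leq \innp{p - q,\, x - y}$ from the previous step:
\begin{align*}
\norm{R_{\gamma A}x - R_{\gamma A}y}^{2}
&= \norm{2(p - q) - (x - y)}^{2}\\
&= 4\norm{p - q}^{2} - 4\innp{p - q,\, x - y} + \norm{x - y}^{2}\\
&\leq 4\innp{p - q,\, x - y} - 4\innp{p - q,\, x - y} + \norm{x - y}^{2} = \norm{x - y}^{2}.
\end{align*}

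Finally, for maximal monotonicity of $J_{\gamma A}$ and $\Id - J_{\gamma A}$ I would appeal to the general principle that every firmly nonexpansive operator $T$ with full domain is maximally monotone: monotonicity is immediate from $\innp{Tx - Ty,\, x - y} \geq \norm{Tx - Ty}^{2} \geq 0$, while maximality is checked by verifying that $\Id + T$ is surjective. This is a second (mild) application of Minty, but it is also direct: since $T$ is $\tfrac{1}{2}$-averaged, the map $z \mapsto w - Tz$ is a contraction for each fixed $w \in \mathcal{H}$ after one Banach–Picard averaging, so the equation $z + Tz = w$ has a unique solution and $\Id + T$ is surjective, completing maximality.
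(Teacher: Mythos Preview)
The paper does not prove this statement at all; it is recorded as a Fact with a bare citation to \cite[Corollary~23.11]{BC2017}. Your self-contained derivation therefore supplies strictly more than the paper does, and the core steps (Minty for well-definedness, the monotonicity inequality for firm nonexpansiveness of $J_{\gamma A}$, the symmetry identity for $\Id - J_{\gamma A}$, and the expansion for $R_{\gamma A}$) are all correct.

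One wrinkle in your final paragraph: the map $z \mapsto w - Tz$ is only nonexpansive when $T$ is firmly nonexpansive, not a strict contraction, and the phrase ``after one Banach--Picard averaging'' does not make the argument precise. A clean fix is to use the $\tfrac{1}{2}$-averaged form explicitly: write $T = \tfrac{1}{2}(\Id + N)$ with $N$ nonexpansive, so that $z + Tz = w$ rearranges to $z = \tfrac{2}{3}w - \tfrac{1}{3}Nz$, and the right-hand side is a $\tfrac{1}{3}$-Lipschitz map of $z$, hence a strict contraction with a unique fixed point. Alternatively, since the paper already records in \cref{fact:ContMaxim} that every continuous monotone operator with full domain is maximally monotone, you can bypass the surjectivity argument entirely: firm nonexpansiveness immediately gives both monotonicity and (Lipschitz) continuity.
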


\begin{fact} {\rm \cite[Corollary~20.28]{BC2017}} \label{fact:ContMaxim}
Let $A : \mathcal{H} \rightarrow \mathcal{H}$ be monotone and continuous. Then $A$ is maximally monotone.
\end{fact}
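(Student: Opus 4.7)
The plan is to verify the maximality condition directly from the definition. Since $A$ is single-valued with full domain $\HH$, showing that $A$ is maximally monotone reduces to proving the following: whenever $(u,v) \in \HH \times \HH$ satisfies
\begin{align*}
\langle x - u, Ax - v \rangle \geq 0 \quad \text{for every } x \in \HH,
\end{align*}
then necessarily $v = Au$, i.e., $(u,v) \in \gra A$. This is just the standard graph-extension characterization of maximality: no new pair can be appended to $\gra A$ without destroying monotonicity.

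To carry this out, I would fix an arbitrary direction $h \in \HH$ and a scalar $t > 0$, and specialize the displayed inequality at $x = u + t h$. This yields $t \langle h, A(u + t h) - v \rangle \geq 0$, hence $\langle h, A(u + t h) - v \rangle \geq 0$. Letting $t \to 0^{+}$ and invoking continuity of $A$ at $u$ together with continuity of the inner product, I pass to the limit to obtain $\langle h, Au - v \rangle \geq 0$. Replacing $h$ by $-h$ forces $\langle h, Au - v \rangle = 0$ for every $h \in \HH$, whence $Au = v$, as required.

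The only real obstacle is conceptual rather than technical: one needs to be comfortable with the equivalent formulation of maximal monotonicity in terms of graph extensions, as opposed to the perhaps more familiar Minty surjectivity formulation \enquote{$\operatorname{ran}(\Id + A) = \HH$}. Once that is in hand, the Minty-type substitution $x = u + t h$ combined with the continuity of $A$ and the bilinearity of $\langle\cdot,\cdot\rangle$ does all the work, and no deeper tool such as the Debrunner--Flor extension theorem is required.
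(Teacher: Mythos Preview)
Your argument is correct: the substitution $x = u + th$, division by $t>0$, passage to the limit via continuity of $A$, and then the sign-flip on $h$ is exactly the standard elementary verification that a continuous monotone operator with full domain is maximally monotone.

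As for comparison with the paper: there is nothing to compare. The paper records this statement as a \emph{Fact} imported verbatim from \cite[Corollary~20.28]{BC2017} and supplies no proof of its own; it is used only as background in \cref{sec:FailGenMaxiMono} to justify that $\alpha\Id$ and $\beta\Id$ (with $\alpha,\beta\geq 0$) are maximally monotone. Your write-up therefore goes beyond what the paper does, and it does so correctly.
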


\begin{fact} {\rm \cite[Corollary~20.26]{BC2017}}
Let $C$ be a nonempty closed convex subset of $\mathcal{H}$. Then $N_{C}$ is maximally monotone.
\end{fact}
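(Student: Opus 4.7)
The plan is to apply Minty's classical characterization: an operator $A \colon \HH \to 2^{\HH}$ is maximally monotone if and only if $A$ is monotone and $\Id + A$ is surjective onto $\HH$. I would verify each of these two conditions for $A = N_{C}$, where $N_{C}(x) = \{u \in \HH \mid (\forall y \in C)~\innp{u, y-x} \leq 0\}$ for $x \in C$ and $N_{C}(x) = \varnothing$ otherwise.

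First, for monotonicity, let $(x_{1},u_{1}), (x_{2},u_{2}) \in \gra N_{C}$. By definition, $x_{1}, x_{2} \in C$ and
\begin{align*}
\innp{u_{1}, x_{2}-x_{1}} \leq 0 \quad \text{and} \quad \innp{u_{2}, x_{1}-x_{2}} \leq 0.
\end{align*}
Adding these two inequalities yields $\innp{u_{1}-u_{2}, x_{1}-x_{2}} \geq 0$, which is exactly the monotonicity inequality. This step is essentially a one-line calculation.

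Next, for surjectivity of $\Id + N_{C}$, I would take an arbitrary $y \in \HH$ and set $x := \Pro_{C} y$. Since $C$ is nonempty, closed, and convex, $x$ is well defined. The variational characterization of the projection onto a nonempty closed convex set (the natural extension of \cref{prop:PR}\cref{prop:PR:Characterization}, where the equality for affine subspaces is replaced by $\innp{y-x, v-x} \leq 0$ for every $v \in C$) shows that $y - x \in N_{C}(x)$. Hence $y = x + (y-x) \in (\Id + N_{C})(x)$, so $\Id + N_{C}$ maps onto $\HH$. Minty's theorem then delivers maximal monotonicity of $N_{C}$.

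The main obstacle lies in what must be invoked from outside the excerpt. Two external ingredients are needed: the variational characterization of $\Pro_{C}$ for \emph{general} convex $C$ (the excerpt only supplies this for affine subspaces, with equality rather than inequality), and Minty's theorem itself, which is not recalled in the excerpt. Both are standard facts from convex analysis (see, e.g., \cite{BC2017}), but must be cited. An alternative route would be to identify $N_{C}$ with the convex subdifferential $\partial \iota_{C}$ of the (proper, lower semicontinuous, convex) indicator function $\iota_{C}$ and then invoke Rockafellar's theorem that such subdifferentials are maximally monotone; however, this path also rests on an unstated external result and provides less direct geometric insight than the projection-based argument.
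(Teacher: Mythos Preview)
Your argument is correct and follows a standard route (monotonicity plus Minty's surjectivity criterion via the projection). Note, however, that the paper does not supply its own proof of this statement: it is recorded as a \emph{Fact} with a bare citation to \cite[Corollary~20.26]{BC2017}, so there is no in-paper argument to compare against. Your proposal thus goes beyond what the paper does, and your own caveat about the two external ingredients (the variational inequality for $\Pro_{C}$ on general convex sets and Minty's theorem) is accurate; both are indeed available in \cite{BC2017}.
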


\begin{fact} {\rm \cite[Corollary~23.4]{BC2017}} \label{fac:JNCPC}
Let $C$ be a nonempty closed convex subset of $\mathcal{H}$.  Then
\begin{align*}
J_{N_{C}}=(\Id +N_{C})^{-1}= \Pro_{C}.
\end{align*}
\end{fact}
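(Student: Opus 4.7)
The plan is to reduce the identity $J_{N_{C}} = \Pro_{C}$ to the classical variational characterization of the projector onto a nonempty closed convex set. First I would recall the standard characterization: for $x\in\HH$ and $p\in\HH$, one has $p=\Pro_{C}x$ if and only if $p\in C$ and
\begin{align*}
(\forall c\in C)\quad \innp{x-p,\,c-p}\leq 0.
\end{align*}
For affine $C$, this is the content of \cref{prop:PR}\cref{prop:PR:Characterization}; for general nonempty closed convex $C$, the inequality replaces the equality and is a standard obtuse-angle criterion. I would invoke this characterization as the starting point.

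Next I would rewrite the obtuse-angle inequality as a normal-cone inclusion. By the very definition of $N_{C}$, when $p\in C$ the set $N_{C}(p)$ consists of those $u\in\HH$ with $\innp{u,c-p}\leq 0$ for every $c\in C$, and $N_{C}(p)=\varnothing$ when $p\notin C$. Thus the two conditions $p\in C$ and $(\forall c\in C)\innp{x-p,c-p}\leq 0$ are together equivalent to $x-p\in N_{C}(p)$, i.e., $x\in p+N_{C}(p)=(\Id+N_{C})(p)$, which in turn means $p\in(\Id+N_{C})^{-1}(x)=J_{N_{C}}(x)$. Chaining these equivalences yields
\begin{align*}
p=\Pro_{C}x \;\Longleftrightarrow\; p\in J_{N_{C}}(x).
\end{align*}

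The remaining point is single-valuedness and full domain of $J_{N_{C}}$: since $\Pro_{C}$ is a well-defined function $\HH\to\HH$ (existence and uniqueness of the minimizer of $\|x-\cdot\|^{2}$ on $C$), the equivalence above forces $J_{N_{C}}(x)=\{\Pro_{C}x\}$ to be a singleton for every $x\in\HH$, so $J_{N_{C}}=\Pro_{C}$ as operators $\HH\to\HH$. Alternatively, one can appeal to the maximal monotonicity of $N_{C}$ (a fact cited just before the statement) together with Minty's theorem, which guarantees a priori that $J_{N_{C}}$ is single-valued with full domain.

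I do not expect a genuine obstacle here: the only subtlety is making sure the characterization of $\Pro_{C}$ is stated in the inequality form valid for arbitrary closed convex sets (not only affine subspaces as in \cref{prop:PR}\cref{prop:PR:Characterization}); once this is in place, the translation between the obtuse-angle inequality and the normal cone inclusion is purely definitional.
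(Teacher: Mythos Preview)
Your argument is correct and is essentially the standard proof: translate the obtuse-angle characterization of $\Pro_{C}$ into the inclusion $x-p\in N_{C}(p)$, which is precisely $p\in(\Id+N_{C})^{-1}(x)$. Note, however, that the paper does not supply its own proof of this statement; it is recorded as a \emph{Fact} with a citation to \cite[Corollary~23.4]{BC2017}, so there is no in-paper argument to compare against. Your proof is the expected one and would be what one finds in the cited reference.
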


By \cref{fac:JNCPC}, $\R_{U_{1}}= 2 \Pro_{U_{1}} -\Id = 2 J_{N_{U_{1}}} -\Id$ and $\R_{U_{2}}= 2 \Pro_{U_{2}} -\Id = 2 J_{N_{U_{2}}} -\Id$. In these special cases, the reflectors are consistent with the corresponding reflected resolvent.

In the following examples, we replace the two maximally monotone operators
$N_{U_{1}}, N_{U_{2}}$ in the set $\mathcal{S} = \{\Id, 2 J_{N_{U_{1}}} -\Id,
2 J_{N_{U_{2}}} -\Id\}$ or $\mathcal{S} = \{\Id, 2 J_{N_{U_{1}}} -\Id, (2
J_{N_{U_{2}}} -\Id ) \circ (2 J_{N_{U_{1}}} -\Id)\}$ by $\alpha \Id$ and
$\beta \Id$ respectively, with $\alpha \geq 0$ and $\beta \geq 0$. By
\cref{fact:ContMaxim}, since $\alpha \geq 0$ and $\beta \geq 0$, we obtain
that $\alpha \Id$ and $\beta \Id$ are maximally monotone operators. We shall
characterize the improperness of the new operator $\CC{\mathcal{S}}$.

\begin{example} \label{exam:MaxiMonoFail}
Assume that $\{0\}\subsetneqq \mathcal{H}$. 
Set $A=\alpha \Id $ and $B=\beta \Id $, 
where $\alpha \geq 0$ and $\beta \geq 0$. Further set 
\begin{align*}
\mathcal{S}_{1}=\{\Id, R_{A}, R_{B}\} \quad \text{and} \quad \mathcal{S}_{2} = \{\Id, R_{A}, R_{B}R_{A}\}.
\end{align*}
Then $\CC{\mathcal{S}_{1}}$ is improper if and only if $\alpha \neq 0$, $\beta \neq 0$ and $\alpha \neq \beta$. Moreover, $\CC{\mathcal{S}_{2}}$ is improper if and only if $\alpha \neq 0$, $\alpha \neq 1$, $\beta \neq 0$ and $\alpha \neq - \beta$. 
\end{example}

\begin{proof}
The definitions yield
\begin{align*}
&J_{A} =(A+\Id)^{-1}=\big((\alpha +1) \Id\big)^{-1}=\frac{1}{\alpha +1} \Id; \quad R_{A}=2 J_{A}- \Id= \frac{2}{\alpha +1} \Id -\Id= \frac{1-\alpha}{\alpha +1} \Id; \\
&J_{B} =(B+\Id)^{-1}=(\big((\beta +1) \Id\big)^{-1}=\frac{1}{\beta +1} \Id; \quad R_{B}=2 J_{B}- \Id= \frac{2}{\beta +1} \Id -\Id= \frac{1-\beta}{\beta +1} \Id.
\end{align*}

Let $x \in \mathcal{H} \smallsetminus{0}$. Now
\begin{subequations}  \label{eq:xrarbrba}
\begin{align}
& x=R_{A}x \Longleftrightarrow x = \frac{1-\alpha}{\alpha +1}x \Longleftrightarrow 1=  \frac{1-\alpha}{\alpha +1} \Longleftrightarrow \alpha =0;\\
& x= R_{B}x \Longleftrightarrow x= \frac{1-\beta}{\beta +1}x \Longleftrightarrow \beta =0;\\
& R_{A}x =R_{B}x \Longleftrightarrow \frac{1-\alpha}{\alpha +1}x = \frac{1-\beta}{\beta +1}x \Longleftrightarrow  (1-\alpha)(\beta+1)=(\alpha+1)(1-\beta)\Longleftrightarrow \alpha =\beta ;\\
& x =R_{B}R_{A}x  \Longleftrightarrow x= \frac{1-\alpha}{\alpha +1} \frac{1-\beta}{\beta +1}x \Longleftrightarrow (\alpha+1)(\beta+1)=  (1-\alpha) (1-\beta) \Longleftrightarrow \alpha = -\beta;\\
& R_{A}x =R_{B}R_{A}x \Longleftrightarrow   \frac{1-\alpha}{\alpha +1}x = \frac{1-\alpha}{\alpha +1} \frac{1-\beta}{\beta +1}x \Longleftrightarrow  \alpha =1 ~\text{or}~ 1=  \frac{1-\beta}{\beta +1} \Longleftrightarrow \alpha =1 ~\text{or}~ \beta =0.
\end{align}
\end{subequations}

\enquote{$\Longrightarrow$}: According to the previous analysis, in both of the assertions, the contrapositive of the required results  follow from \cref{prop:form:m2:Oper}. 

\enquote{$\Longleftarrow$}: Assume $\alpha \neq 0$, $\beta \neq 0$ and $\alpha \neq \beta$. Then 
\begin{align*}
\aff (\mathcal{S}_{1}(x)) & = \aff \{x, R_{A}x, R_{B}x\}= x +\spn \{R_{A}x-x, R_{B}x-x\} \\
& =x + \spn\Big\{\frac{-2\alpha}{\alpha +1}x,\frac{-2\beta}{\beta+1}x \Big\} \\
& = \mathbb{R}\cdot x.
\end{align*}

Let $x \in \mathcal{H} \smallsetminus \{0\}$. We observe that
\begin{subequations} \label{eq:exam:MaxiMonoFail:yS1}
\begin{align}
& \Big(\exists y \in \aff (\mathcal{S}_{1}(x))\Big) \quad  \norm{y- x} =\norm{y-R_{A}x}= \norm{y-R_{B}x} \\
\Longleftrightarrow & (\exists t \in \mathbb{R}) \quad \norm{t x- x} = \norm{t x- \frac{1-\alpha}{\alpha +1}x} =\norm{t x- \frac{1-\beta}{\beta +1}x} \\
\Longleftrightarrow & (\exists t \in \mathbb{R}) \quad  |t -1|= \Big|t -  \frac{1-\alpha}{\alpha +1} \Big|= \Big| t - \frac{1-\beta}{\beta +1} \Big|. \quad (\text{by}~x \neq 0)
\end{align}
\end{subequations}
On the other hand, combining the assumptions with \cref{cor:mathbbRcard3} and \cref{eq:xrarbrba}, we obtain that
\begin{align} \label{eq:exam:MaxiMonoFail:not}
(\cancel{\exists} t \in \mathbb{R}) \quad |t -1|= \Big|t -  \frac{1-\alpha}{\alpha +1} \Big|= \Big| t - \frac{1-\beta}{\beta +1} \Big|.
\end{align}
Hence, 
\begin{align*}
(\forall x \in \mathcal{H} \smallsetminus \{0\})  \quad \CC{\mathcal{S}_{1}}x = \varnothing.
\end{align*}

Assume $\alpha \neq 0$, $\alpha \neq 1$, $\beta \neq 0$ and $\alpha \neq - \beta$. A similar
proof shows that for every $x \in \mathcal{H} \smallsetminus \{0\}$, there is no point $y \in \aff (\mathcal{S}_{2}(x))$, such
that
$\norm{y- x} =\norm{y-R_{A}x}= \norm{y-R_{B}R_{A}x}$, 
which implies that 
$(\forall x \in \mathcal{H} \smallsetminus \{0\})$ 
$\CC{\mathcal{S}_{2}}x = \varnothing$.
\end{proof}

Arguing similarly to the proof of the previous result,
we also obtain the following result:

\begin{example} \label{exam:MaxiMonoConstFail}
Assume that $\{0\} \subsetneqq \mathcal{H}$. 
Let $\{a,b\} \subseteq \mathbb{R}$. 
Set $A \equiv a $, i.e., $(\forall x \in \mathcal{H})$ $Ax =a$, and $B \equiv
b $. Furthermore, set 
\begin{align*}
\mathcal{S}_{1}=\{\Id, R_{A}, R_{B}\} \quad \text{and} \quad \mathcal{S}_{2} = \{\Id, R_{A}, R_{B}R_{A}\}.
\end{align*}
Then $\CC{\mathcal{S}_{1}}$ is improper if and only if $a \neq 0$, $b \neq 0$ and $a \neq b$. Moreover, $\CC{\mathcal{S}_{2}}$ is improper if and only if $a \neq 0$, $b \neq 0$ and $a \neq - b$. 
\end{example}

The example above shows that there is no direct way to generalize the definition of $\CC{\mathcal{S}}$ to 
maximally monotone theory.

%---------------------------------------------Acknoledgments---------------------------------------------		
\section*{Acknowledgments}
HHB and XW were partially supported by NSERC Discovery Grants.

%---------------------------------------------References---------------------------------------------				
\addcontentsline{toc}{section}{References}

\bibliographystyle{abbrv}

\end{document}